\newtheorem{theorem}{Theorem}[section]
\newtheorem{corollary}[theorem]{Corollary}
\newtheorem{lemma}[theorem]{Lemma}
\newtheorem{proposition}[theorem]{Proposition}
\theoremstyle{definition}
\newtheorem{example}[theorem]{Example}
\newtheorem{nonexample}[theorem]{Non-example}
\newtheorem{remark}[theorem]{Remark}
\newtheorem{question}[theorem]{Question}
\newtheorem{problem}[theorem]{Problem}
\def\A{{\mathcal{A}}}
\def\E{{\mathcal{E}}}
\def\P{{\mathcal{P}}}
\def\cR{{\mathcal{R}}}
\def\S{{\mathcal{S}}}
\def\C{{\mathbb{C}}}
\def\M{{\mathbb{M}}}
\def\N{{\mathbb{N}}}
\def\R{{\mathbb{R}}}
\def\Z{{\mathbb{Z}}}
\begin{document}
\title[Regular $^*$-subalgebras of $B(H)$]{On regular $^*$-algebras of bounded linear operators:
A new approach towards a theory of noncommutative Boolean algebras}

\author[M. Mori]{Michiya Mori}

\address{Interdisciplinary Theoretical and Mathematical Sciences Program (iTHEMS), RIKEN, 2-1 Hirosawa, Wako, Saitama 351-0198 Japan.}
\email{michiya.mori@riken.jp}

\thanks{The author is supported by RIKEN Special Postdoctoral Researcher Program.}
\subjclass[2020]{Primary 06E75, Secondary 03G12, 06C20, 16E50, 47C15, 47L40, 81P10.} 

\keywords{nonclosed self-adjoint operator algebra; von Neumann regular ring; Boolean algebra; AF C$^*$-algebra; projection lattice; complemented modular lattice; inner product space}

\date{}

\begin{abstract}
We study (von Neumann) regular $^*$-subalgebras of $B(H)$, which we call R$^*$-algebras.
The class of R$^*$-algebras coincides with that of ``E$^*$-algebras that are pre-C$^*$-algebras'' in the sense of   Z. Sz\H{u}cs and B. Tak\'acs.
We give examples, properties and questions of R$^*$-algebras.
We observe that the class of unital commutative R$^*$-algebras has a canonical one-to-one correspondence with the class of Boolean algebras. 
This motivates the study of R$^*$-algebras as that of noncommutative Boolean algebras.
We explain that seemingly unrelated topics of functional analysis, like AF C$^*$-algebras and incomplete inner product spaces, naturally arise in the investigation of R$^*$-algebras. 
We obtain a number of results on R$^*$-algebras by applying various famous theorems in the literature.
\end{abstract}
\maketitle
\thispagestyle{empty}

\section{Introduction}\label{intro}
Throughout the paper{,} $H$ is a complex Hilbert space{,} and $B(H)$ denotes the von Neumann algebra of bounded linear operators on $H$.  
%
A motivation for our research comes from a problem raised by Kadison. 
In the celebrated conference in Baton Rouge in 1967, Kadison proposed a list of 20 problems on operator algebras. 
The list was shared among operator algebraists worldwide, which brought a great influence to the development of the theory of operator algebras over the last half-century. 
In 2003, Ge published a survey article \cite{Ge} on Kadison's list of problems.
Among the 20 problems, 19 are about C$^*$- or von Neumann algebras. 
The only exception is listed as the 15th problem in Ge's article, which asks the following.
\begin{problem}[Kadison]\label{kadison}
If a self-adjoint operator algebra is finitely generated (algebraically) and each self-adjoint operator in it has finite spectrum, is it finite-dimensional?
\end{problem}
In the recent paper \cite{ST}, Sz\H{u}cs and Tak\'acs discovered that for a $^*$-subalgebra $A$ of $B(H)$ several conditions are equivalent to the condition every self-adjoint element of $A$ has finite spectrum. 
In this paper, we first observe in Theorem \ref{equi} that these conditions are also equivalent to the condition $A$ is regular in the sense of von Neumann. 
We call a $^*$-subalgebra of $B(H)$ that satisfies these equivalent conditions an R$^*$-algebra (``R'' meaning ``regular'').
The abundance of the conditions for a $^*$-subalgebra of $B(H)$ to be an R$^*$-algebra implies that R$^*$-algebras form more or less an important class of $^*$-algebras. 

Even though we assume no completeness, it turns out that R$^*$-algebras share some important properties with C$^*$-/von Neumann algebras. 
For example, we may freely use the technique of functional calculus.
Recall that there exists a correspondence between commutative C$^*$-algebras and locally compact Hausdorff spaces (Gelfand--Naimark theorem), and similarly, between commutative von Neumann algebras and certain measure spaces.
Is there such a correspondence for R$^*$-algebras?
By von Neumann's study of $^*$-regular rings in \cite[Part II]{N}, projections of a general R$^*$-algebra form a lattice.
It follows that the lattice of projections of a commutative R$^*$-algebra is a generalized Boolean algebra.
In fact, there exists a complete correspondence between the class of commutative R$^*$-algebras and that of generalized Boolean algebras. 
By restricting ourselves to the unital case, we see that unital commutative R$^*$-algebras correspond to Boolean algebras. 
In that sense (the projection lattice of) an R$^*$-algebra can be considered as a noncommutative (generalized) Boolean algebra. 
This correspondence is closely tied with Stone's theorem on the duality between Boolean algebras and Boolean spaces. 

Are there plenty of noncommutative R$^*$-algebras?
The answer is yes. 
The class of ultramatricial R$^*$-algebras, that is, directed unions of finite-dimensional C$^*$-algebras, provides us with many examples of noncommutative R$^*$-algebras. 
It turns out that Problem \ref{kadison}, which remains to be open, is equivalent to asking whether every R$^*$-algebra is ultramatricial. 
Ultramatricial R$^*$-algebras with countable Hamel dimension are highly relevant to the theory of separable AF C$^*$-algebras, whose classification by Elliott \cite{E} is one of the best{-}known results of the theory of C$^*$-algebras. 
Another subclass of ultramatricial R$^*$-algebras, which we call purely atomic R$^*$-algebras, is constructed from inner product spaces. 
We give a complete classification of purely atomic R$^*$-algebras in terms of linear isometries of inner product spaces.
By applying Banach space theory and results on operator ranges, we prove that there are many mutually nonisomorphic purely atomic R$^*$-algebras. 

The literature of regular rings, Boolean algebras, and inner product spaces in connection with Banach spaces, as well as C$^*$-/von Neumann algebras provides us with many deep results and further questions. 
The paper intends to collect some of them in order to give supporting {evidence} for the author's belief that the study of R$^*$-algebras is interesting enough to be considered in more depth. 
This paper should give much motivation to the research on Problem \ref{kadison}, which seems to have gotten less attention among operator algebraists compared to other problems in the list \cite{Ge}. 
The choice of materials in this paper is more or less biased by the author's preference.

This paper is formulated in the following manner. 
In the next section{,} we give the definition and basic properties of R$^*$-algebras. 
In particular, we characterize R$^*$-algebras by closedness of the ranges of operators, the uniqueness of C$^*$-norm of $^*$-subalgebras as well as von Neumann regularity (Theorems \ref{equi}, \ref{c*norm}).
Section \ref{example} provides examples of R$^*$-algebras and discusses Problem \ref{kadison}. 
The correspondence between commutative R$^*$-algebras and Boolean algebras is given in Section \ref{com}. 
Section \ref{atom} is devoted to giving the relation between purely atomic R$^*$-algebras and inner product spaces. 
Some results that arise by comparing the projection lattice of an R$^*$-algebra with that of a von Neumann algebra or a Baer $^*$-ring are given in Section \ref{comparison}.
The consequence of Elliott's classification theorem of AF C$^*$-algebras is summarized in Section \ref{countable}. 
We also revisit Elliott's theorem from a lattice theoretic viewpoint (Theorem \ref{sumlattice}).
In Section \ref{nd} we collect results that derive from von Neumann, Dye and Gleason's theorems. 
We close this paper by proposing questions on R$^*$-algebras for further research (Section \ref{question}). 

In the paper{,} we frequently use basic results on C$^*$-/von Neumann algebras as in \cite{KR1,KR2,Ta}, and von Neumann's theory on complemented modular lattices and regular rings as in (the first four chapters of) \cite[Part II]{N}. 
None of the proofs is dependent on any results by Sz\H{u}cs and Tak\'acs in \cite{ST}, although the underlying idea may have some overlap implicitly.

\section{Regular $^*$-subalgebras of $B(H)$}\label{defi}
If there is no confusion, for each $z\in \C$, we often write $z$ instead of the scalar multiplication operator $z\cdot \operatorname{id}_{H}\in B(H)$.
In particular, $1$ means the unit of $B(H)$.
An \emph{algebra} means an algebra over the field of complex numbers, possibly without unit.
A subspace or a subalgebra in this paper need not be closed. 
For a subset $S\subset B(H)$, the symbol $\operatorname{span} S$ means the linear space spanned by $S$, that is, the smallest complex-linear subspace of $B(H)$ that contains $S$. 
Similarly, $\A(S)$ and $\A^*(S)$ denote the algebra and the $^*$-algebra generated by $S$, respectively.
Namely, $\A(S)$ (resp.\ $\A^*(S)$) is the smallest subalgebra (resp.\ $^*$-subalgebra) of $B(H)$ that contains $S$.
For a finite subset $\{x_1, \ldots, x_n\}\subset B(H)$, we define $\A(x_1, \ldots, x_n):=\A(\{x_1, \ldots, x_n\})$ and $\A^*(x_1, \ldots, x_n):=\A^*(\{x_1, \ldots, x_n\})$.
A \emph{projection} of $B(H)$ is an operator $p\in B(H)$ with $p=p^2=p^*$.

Let $x\in B(H)$. 
The range of $x$ is written as $\operatorname{ran} x$ or $xH$, and the kernel as $\ker x$.
The symbol $\sigma(x)$ stands for the spectrum of $x$, i.e., $\sigma(x):=\{z\in \C\mid z - x \text{ is not invertible in }B(H)\}$.
Let $l(x)$ and $r(x)$ denote the left and right support projections of $x$. 
That is, $l(x)$ is the projection onto the closure of $\operatorname{ran} x$, and $r(x)$ is the projection onto $(\ker x )^{\perp}$.
If in addition $x$ is self-adjoint, then $s(x):=l(x)=r(x)$ is called the \emph{support} of $x$.
Recall that every $x\in B(H)$ has the unique polar decomposition $x=v\lvert x\rvert$ such that $\lvert x\rvert = (x^*x)^{1/2}$ and $vv^*=l(x)$, $v^*v=r(x)$.

Although the theorem below can be obtained as a corollary of \cite[Theorems 3.8 and 4.1]{ST} by Sz\H{u}cs and Tak\'acs, our discovery of (or at least the emphasis on) its relation with von Neumann regularity is new. 
This viewpoint motivates a deeper study on this class of algebras in connection with Boolean algebras in the subsequent sections.
For the convenience of the reader, we give a proof {that} is independent of \cite{ST}. 
Our proof needs no background beyond the standard first course of functional analysis. 

\begin{theorem}\label{equi}
Let $A$ be a $^*$-subalgebra of $B(H)$. The following are equivalent.
\begin{enumerate}
\item $A$ is regular (in the sense of von Neumann), i.e., for each $x\in A$ there exists $y\in A$ with $xyx=x$. 
\item $A$ is algebraic, i.e., for each $x\in A$ the algebra $\A(x)$ generated by $x$ has finite dimension.
\item Each operator in $A$ has closed range. 
\item Each operator in $A$ has finite spectrum. 
\item Each self-adjoint operator in $A$ has closed range. 
\item Each self-adjoint operator in $A$ has finite spectrum.
\end{enumerate}
\end{theorem}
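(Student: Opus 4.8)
The plan is to prove the cycle $(1)\Rightarrow(3)\Rightarrow(5)\Rightarrow(6)\Rightarrow(1)$, closing the loop on the four ``analytic'' conditions, and then to attach $(2)$ and $(4)$ through $(6)\Rightarrow(2)\Rightarrow(4)\Rightarrow(6)$; several of these arrows are immediate. The implications $(3)\Rightarrow(5)$ and $(4)\Rightarrow(6)$ are trivial since self-adjoint operators form a subset, and $(2)\Rightarrow(4)$ follows from the spectral mapping theorem: if $\A(x)$ is finite-dimensional then $x$ satisfies a nonzero polynomial $q$ with $q(0)=0$, whence $q(\sigma(x))=\sigma(q(x))=\{0\}$ and $\sigma(x)$ lies in the finite set $q^{-1}(0)$.

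For $(1)\Rightarrow(3)$ I would use that if $xyx=x$ then $e:=xy$ is an idempotent with $\operatorname{ran} x=\operatorname{ran} e=\ker(1-e)$, which is closed. For the reverse direction $(6)\Rightarrow(1)$ I would run functional calculus on the positive element $a:=x^{*}x\in A$: since $\sigma(a)$ is finite I can interpolate the function $t\mapsto 1/t$ (together with $0\mapsto 0$) by a polynomial $p$ with $p(0)=0$, so that $b:=p(a)\in\A(a)\subseteq A$ and $ab=ba=s(a)=r(x)$. Setting $y:=bx^{*}\in A$ then gives $xyx=xb\,(x^{*}x)=x\,s(a)=x\,r(x)=x$, so $A$ is regular. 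The one genuinely analytic step among these is $(5)\Rightarrow(6)$, which I would prove by contraposition: if some self-adjoint $a\in A$ had infinite (hence, by compactness of $\sigma(a)$, accumulating) spectrum with accumulation point $\lambda$, then the self-adjoint element $a^{2}-\lambda a\in A$ has $0$ as a non-isolated point of its spectrum $\{t^{2}-\lambda t: t\in\sigma(a)\}$, and a self-adjoint operator whose spectrum accumulates at $0$ cannot have closed range, contradicting $(5)$.

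The main obstacle is $(6)\Rightarrow(2)$, i.e.\ deriving algebraicity of an arbitrary $x\in A$ from the spectral condition; this is exactly where the hypothesis on \emph{all} self-adjoint elements, rather than on $x$ alone, must be exploited, since a single quasinilpotent operator shows that finite spectrum of one element is far from algebraicity. I would decompose $x$ through the spectral projections of $a=x^{*}x$ and $c=xx^{*}$: these share the same nonzero eigenvalues $\mu_{1},\dots,\mu_{k}$, the corresponding spectral projections $e_{i},f_{i}$ lie in $A$, and $w_{i}:=f_{i}xe_{i}\in A$ satisfy $x=\sum_{i}w_{i}$, $w_{i}^{*}w_{i}=\mu_{i}e_{i}$, and $w_{i}w_{i}^{*}=\mu_{i}f_{i}$. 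As $e_{i},f_{i},w_{i}\in\A^{*}(x)$, it suffices to prove that the $^{*}$-algebra generated by the finite family $\{e_{i},f_{i}\}$ of projections (equivalently, by the scaled partial isometries $\mu_{i}^{-1/2}w_{i}$) is finite-dimensional. The engine I would isolate first is a two-projection lemma: for projections $p,q\in A$ the self-adjoint element $pqp$ has finite spectrum, and the classical description of a pair of projections exhibits $\A^{*}(p,q)$ as a finite direct sum of blocks of dimension at most four indexed by $\sigma(pqp)$, hence finite-dimensional. Propagating this control of relative positions across the whole finite configuration $\{e_{i},f_{i}\}$ should bound $\dim\A^{*}(x)$, and therefore $\dim\A(x)$, yielding $(2)$; making the passage from two projections to the entire finite family airtight is the step I expect to demand the most care.
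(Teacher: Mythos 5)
Most of your arrows are correct, and two are even a little cleaner than the paper's: $(1)\Rightarrow(3)$ via $\operatorname{ran} x=\operatorname{ran}(xy)=\ker(1-xy)$ is exactly the content of the paper's preliminary lemma, and your $(6)\Rightarrow(1)$ with $y=p(x^*x)x^*$, where $p$ interpolates $t\mapsto 1/t$ on $\sigma(x^*x)\setminus\{0\}$ and $p(0)=0$, gives $xyx=x\,s(x^*x)=x\,r(x)=x$ directly, avoiding the paper's detour through the polar decomposition; $(5)\Rightarrow(6)$ via $a^2-\lambda a$ and $(2)\Rightarrow(4)$ via the spectral mapping theorem match the paper. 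The fatal problem is the reduction inside $(6)\Rightarrow(2)$, which is the only genuinely hard implication. Your decomposition is fine: $e_i,f_i\in\A^*(x)$, $w_i=f_ixe_i$ satisfies $w_i^*w_i=\mu_ie_i$, $w_iw_i^*=\mu_if_i$, and $\A^*(x)=\A^*(w_1,\ldots,w_k)$. But your claimed equivalence between generating by $\{e_i,f_i\}$ and generating by the $w_i$ is false: a partial isometry is not determined, even up to finite-dimensional corrections, by its initial and final projections. The case $x$ unitary already destroys the plan: then $k=1$ and $e_1=f_1=1$, so your projection configuration generates $\C\cdot 1$ and every pairwise two-projection lemma is vacuous, yet $\A^*(x)$ is infinite-dimensional whenever $\sigma(x)$ is infinite (e.g.\ $x$ a bilateral shift). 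Such an $x$ does violate (6) — through $x+x^*$, say — but your argument never uses (6) on any element that would detect this, since it invokes finite spectrum only for the finitely many elements $x^*x$, $xx^*$ and the pairwise $pqp$'s. Equivalently: the corrected target, finite-dimensionality of $\A^*(w_1,\ldots,w_k)$, is for $k=1$ exactly the original problem for a single partial isometry, so the decomposition gains nothing.

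The underlying issue is that algebraicity of $x$ cannot follow from finite spectrum of finitely many self-adjoint elements built from $x$; hypothesis (6) must be exploited for an infinite family. This is precisely what the paper's proof of $(6)\Rightarrow(2)$ does: writing $x=a+ib$, it uses finiteness of $\sigma(sa+tb)$ for \emph{all} real $s,t$ to see that the curve $t\mapsto\exp(sa+tb)$ lies in the countable-dimensional algebra $A_0=\A(a,b,1)$, then a Baire category argument over a Hamel basis of $A_0$ together with the identity theorem for the holomorphic map $z\mapsto\exp(sa+zb)$ traps $\exp(sx)$, and finally the image of $z\mapsto\exp(zx)$, in a finite-dimensional subspace, forcing $x$ to be algebraic. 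Some such mechanism converting the one-parameter family of spectral-finiteness hypotheses into a finiteness statement is what your proposal is missing, and the two-projection structure theory (true as you state it for a pair $p,q$ with $\sigma(pqp)$ finite) cannot substitute for it.
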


Before proving it, let us first give a preliminary result. 
Let $x, y\in B(H)$. 
Then $y$ is called a \emph{Moore--Penrose inverse} of $x$ if $y$ satisfies $l(y)=r(x)$, $r(y)= l(x)$, $yx=r(x)$ and $xy = l(x)$.
It is easy to see that such an operator (if there exists) is unique.   

\begin{lemma}
For an operator $x\in B(H)$ the following are equivalent. 
\begin{itemize}
\item The range of $x$ is closed. 
\item There exists $y\in B(H)$ with $xyx=x$. 
\item $x$ has a Moore--Penrose inverse.
\end{itemize}
\end{lemma}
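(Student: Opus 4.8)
The plan is to prove the three conditions equivalent by a cycle of implications: a Moore--Penrose inverse is in particular a solution of $xyx=x$; the existence of such a $y$ forces $\operatorname{ran} x$ to be closed; and closedness of the range allows one to build a genuine Moore--Penrose inverse. The first link is immediate: if $y$ is a Moore--Penrose inverse then $xy=l(x)$, and since $l(x)$ is the projection onto $\overline{\operatorname{ran} x}\supset \operatorname{ran} x$ we have $l(x)x=x$, whence $xyx=l(x)x=x$.

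For the second link, suppose $xyx=x$ for some $y\in B(H)$ and set $e:=xy$. Then $e^2=xyxy=xy=e$, so $e$ is a (not necessarily self-adjoint) idempotent. I would check $\operatorname{ran} e=\operatorname{ran} x$: one inclusion is clear, and conversely $x\xi=xyx\xi=e(x\xi)\in\operatorname{ran} e$ for every $\xi\in H$. Since the range of a bounded idempotent $e$ coincides with the closed subspace $\ker(1-e)$, the range of $x$ is closed.

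The substance of the lemma is the remaining link, from closed range to the existence of a Moore--Penrose inverse, and here I would exploit the polar decomposition $x=v\lvert x\rvert$ with $v^*v=r(x)$ and $vv^*=l(x)$. Writing $s:=r(x)$ for the support of $\lvert x\rvert$, the partial isometry $v$ restricts to an isometry of $\overline{\operatorname{ran}\lvert x\rvert}=sH$ onto $\overline{\operatorname{ran} x}$ and carries $\operatorname{ran}\lvert x\rvert$ onto $\operatorname{ran} x$; hence $\operatorname{ran} x$ is closed if and only if $\operatorname{ran}\lvert x\rvert$ is. Assuming the latter, $\lvert x\rvert\colon sH\to sH$ is a bounded bijection (it is injective on $sH$, with dense and now closed range equal to $sH$), so by the bounded inverse theorem it has a bounded inverse, which I extend by $0$ on $(1-s)H$ to a positive operator $w\in B(H)$ with $w\lvert x\rvert=\lvert x\rvert w=s$ and $s(w)=s$. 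I then claim that $y:=wv^*$ is the desired Moore--Penrose inverse: using $vs=v$, $s\lvert x\rvert=\lvert x\rvert$ and the partial-isometry relations, a direct computation gives $xy=v\lvert x\rvert wv^*=vsv^*=vv^*=l(x)$ and $yx=wv^*v\lvert x\rvert=ws\lvert x\rvert=w\lvert x\rvert=s=r(x)$, while tracking the range and kernel of $wv^*$ yields $l(y)=s=r(x)$ and $r(y)=l(x)$.

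The main obstacle is precisely this last implication: transferring closedness of $\operatorname{ran} x$ to $\operatorname{ran}\lvert x\rvert$ through the polar decomposition, and then inverting $\lvert x\rvert$ on its support via the bounded inverse theorem. Once $w$ is available, verifying the four defining relations of a Moore--Penrose inverse is routine bookkeeping with the identities $vs=v$ and $s\lvert x\rvert=\lvert x\rvert$. Alternatively, one could produce $w$ by functional calculus, observing that closed range of $\lvert x\rvert$ makes $0$ isolated in, or absent from, $\sigma(\lvert x\rvert)$, and setting $w=f(\lvert x\rvert)$ for a continuous $f$ with $f(t)=1/t$ away from $0$ and $f(0)=0$; this route trades the open mapping theorem for the spectral observation.
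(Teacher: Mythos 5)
Your proof is correct, but it runs the cycle of implications in the opposite orientation from the paper and uses different devices at two of the three links. The paper proves closed range $\Rightarrow$ existence of $y$ with $xyx=x$ by restricting $x$ to a bounded bijection $x_0\colon(\ker x)^{\perp}\to\operatorname{ran}x$, inverting $x_0$ by the inverse mapping theorem, and defining $y$ as $x_0^{-1}$ on $\operatorname{ran}x$ and $0$ on its complement; it then passes from $xyx=x$ to the Moore--Penrose inverse by compressing, observing that $x=x\,r(x)yl(x)\,x$ and that $r(x)yl(x)$ is the Moore--Penrose inverse; finally it closes the cycle by noting that a Moore--Penrose inverse gives a bounded inverse of the restriction of $x$, forcing the range closed. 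You instead go Moore--Penrose $\Rightarrow$ $xyx=x$ (trivially), then $xyx=x$ $\Rightarrow$ closed range via the idempotent $e=xy$ with $\operatorname{ran}e=\operatorname{ran}x=\ker(1-e)$, and then construct the Moore--Penrose inverse directly from the polar decomposition as $y=wv^*$ with $w$ the inverse of $\lvert x\rvert$ on its support. Your idempotent argument is a genuinely different (and purely algebraic-topological) way to get closedness that the paper does not use, and your polar-decomposition construction builds the Moore--Penrose inverse in one step rather than via compression; it is in fact the same device the paper deploys later, in the proof of Theorem \ref{equi} $(6)\Rightarrow(1)$ and Proposition \ref{operation}, where $y=P_2(\lvert x\rvert)v^*$ plays exactly the role of your $wv^*$ (there with functional calculus in place of the bounded inverse theorem, as in your closing remark). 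The paper's route is marginally more elementary in that it never needs the polar decomposition for this lemma; yours has the advantage that each implication is verified by explicit identities ($vs=v$, $s\lvert x\rvert=\lvert x\rvert$) with no appeal left ``easy to show.''
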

\begin{proof}
Let $x\in B(H)$ have closed range. 
Then $x$ restricts to a bounded bijection $x_0$ from $(\ker x)^{\perp}$ onto $\operatorname{ran} x$, so $x_0$ has a bounded inverse by the inverse mapping theorem. 
We define $y$ as the unique operator in $B(H)$ with $y=x_0^{-1}$ on $\operatorname{ran} x$ and $y=0$ on $(\operatorname{ran} x)^{\perp}$.
Then we have $xyx=x$ by a direct calculation.
If $xyx=x$, then we have $x=xyx= xr(x)yl(x)x$, and it is easy to show that the operator $r(x)yl(x)$ is the Moore--Penrose inverse of $x$.
If $x$ has a Moore--Penrose inverse, the restriction of $x$ as a mapping from $(\ker x)^{\perp}$ into the closure of $\operatorname{ran} x$ has a bounded inverse, which implies the closedness of $\operatorname{ran} x$. 
\end{proof}

\begin{proof}[Proof of Theorem \ref{equi}]
The implications $(2)\Rightarrow(4)\Rightarrow(6)$ and  $(3)\Rightarrow(5)$ are obvious.
The above lemma implies $(1)\Rightarrow (3)$.\\
$(5)\Rightarrow(6)$  
Recall that a self-adjoint operator $a\in B(H)$ has closed range if and only if $0$ is not an accumulation point of the spectrum $\sigma(a)$ of $a$. 
Assume that a self-adjoint operator $a\in A$ has infinite spectrum. 
Take an accumulation point $s\in \R$ in the spectrum $\sigma(a)$. 
By the spectral mapping theorem, the self-adjoint operator $a^2-sa\in A$ has $0$ as an accumulation point of its spectrum, so the range of $a^2-sa$ is not closed.\\  
$(6)\Rightarrow(1)$ Let $x\in A$ and $x=v\lvert x\rvert$ be the polar decomposition of $x$ in $B(H)$. 
Since $x^*x$ is a self-adjoint operator with finite spectrum, we may take a polynomial $P_1$ with real coefficients that coincides with the square root map on $\sigma(x^*x)\cup \{0\}$. 
We thus have $\lvert x\rvert = (x^*x)^{1/2} = P_1(x^*x) \in A$.  
We may also take a polynomial $P_2$ with real coefficients such that $P_2(t)=t^{-1}$ for every $t\in \sigma(\lvert x\rvert)\setminus\{0\}$ and $P_2(0)=0$. 
We thus obtain $v= v\lvert x\rvert P_2(\lvert x\rvert) = xP_2(\lvert x\rvert)\in A$. 
Set $y:= P_2(\lvert x\rvert) v^*\in A$. 
Then we obtain $xyx = v\lvert x\rvert P_2(\lvert x\rvert) v^*v\lvert x\rvert = v\lvert x\rvert =x$. \\
$(6)\Rightarrow(2)$ Let $x\in A$. 
Take self-adjoint operators $a, b\in A$ with $x=a+ib$, and put $A_0 := \A(a, b, 1) = \A^*(a, b, 1)\subset A+\C \operatorname{id}_H\subset B(H)$. 
Observe that $A_0$ has countable Hamel dimension.
Fix $s\in \R$.
For $z\in \C$, set $f(z) :=\exp(sa+zb)\in B(H)$. 
Then $f\colon \C\to B(H)$ is a holomorphic map.
If $t\in \R$, then $sa+tb$ has finite spectrum, and we obtain $f(t) = \exp (sa+tb) \in \A^*(sa+tb, 1) \subset A_0$.
Take a Hamel basis $\{e_n\}_{n\geq 1}$ of $A_0$. 
For each $n\geq 1$, the subset $F_n:=\{t\in \R\mid f(t)\in \mathrm{span}\{e_1, \ldots, e_n\}\}$ is closed in $\R$. 
Moreover, we have $\cup_{n\geq 1}F_n = \R$ because $f(\R)\subset A_0$.
By the Baire category theorem, we see that some $F_n$ has nonempty interior $U$.
Take a bounded linear surjective operator $\pi\colon B(H) \to \mathrm{span}\{e_1, \ldots, e_n\}$ such that $\pi^2=\pi$.
We see that $\pi\circ f=f$ on $U$. 
On the other hand, both $f$ and $\pi\circ f$ are holomorphic maps from $\C$ into $B(H)$.
By the identity theorem, we obtain $\pi\circ f = f$ on $\C$. 
Thus $f(\C)\subset \mathrm{span}\{e_1, \ldots, e_n\}$.
It follows that $\exp(sx) = \exp (s(a+ib))  \in A_0$ for every $s\in \R$. 
Define the holomorphic map $g\colon \C\to B(H)$ by $g(z)=\exp(zx)$.
By the same reasoning as above, we see that the image of $g$ is contained in a finite-dimensional subspace of $A_0$. 
This plainly forces $x$ to be algebraic. 
\end{proof}

We call $A$ that satisfies the equivalent conditions of Theorem \ref{equi} a \emph{regular $^*$-subalgebra} of $B(H)$, or an \emph{R$^*$-algebra}, for short. 
The same object is called an ``E$^*$-algebra that is a pre-C$^*$-algebra'' in \cite{ST}.

\begin{corollary}\label{subalg}
Every $^*$-subalgebra of an R$^*$-algebra is an R$^*$-algebra.
\end{corollary}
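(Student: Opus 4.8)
The plan is to exploit the fact that most of the equivalent conditions of Theorem \ref{equi} are stated \emph{elementwise}, i.e.\ as properties of individual operators in the algebra rather than as structural properties of the algebra as a whole. First I would let $A$ be an R$^*$-algebra and let $B$ be a $^*$-subalgebra of $A$. Since $A$ is itself a $^*$-subalgebra of $B(H)$ and $B \subseteq A$, the algebra $B$ is again a $^*$-subalgebra of $B(H)$, so Theorem \ref{equi} is applicable to $B$.

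Next I would invoke condition (4) (or equally condition (6)) of Theorem \ref{equi} for $A$: every operator in $A$ has finite spectrum. The crucial point is that this property refers only to each element $x$ separately — the spectrum $\sigma(x)$ is computed in $B(H)$ and does not depend on which $^*$-subalgebra of $B(H)$ we regard $x$ as belonging to. Since every element of $B$ is in particular an element of $A$, each operator in $B$ has finite spectrum. Hence $B$ satisfies condition (4), and Theorem \ref{equi}, now applied to $B$, yields that $B$ is regular, i.e.\ an R$^*$-algebra. The same reasoning works verbatim with any of conditions (2), (3), (5), or (6) in place of (4), since each of these is likewise inherited automatically by subsets.

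The only point that requires care — and the reason the corollary is not completely immediate from the definition — is that the defining regularity condition (1) is \emph{not} obviously inherited by subalgebras: the quasi-inverse $y$ with $xyx = x$ is only guaranteed to exist in $A$, and there is no a priori reason for it to lie in the smaller algebra $B$. The content of the corollary is therefore precisely that Theorem \ref{equi} lets us bypass this difficulty by certifying regularity through a manifestly hereditary spectral (or range) condition rather than through the definition directly. I expect no further obstacle; once the right equivalent condition has been selected, the argument is a one-line application of the theorem.
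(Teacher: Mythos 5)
Your proof is correct and is essentially the paper's own argument: the paper disposes of Corollary \ref{subalg} by observing it is clear from any of the elementwise conditions (2)--(6) of Theorem \ref{equi}, exactly as you do with condition (4). Your additional remark that the regularity condition (1) is not obviously hereditary (since the quasi-inverse need not lie in the subalgebra) is accurate and correctly identifies why the equivalence is needed.
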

\begin{proof}
Clear from any item of  Theorem \ref{equi} (2)--(6).
\end{proof}

The equivalence of (1)--(4) of the above theorem does not hold in general if we consider an algebra instead of a $^*$-algebra. 
Indeed, it is known that there exists a \emph{closed} subalgebra $A$ of $B(H)$ such that (3) and (4) hold but (2) does not hold \cite[Theorems 5 and 6]{W}.  
Since $\{x\in B(H)\mid x^*x=0\} =\{0\}$, we obtain
\begin{lemma}\label{starregular}
An R$^*$-algebra $A$ is a $^*$-regular ring, i.e., $A$ is a $^*$-ring that is regular and satisfies $\{x\in A\mid x^*x=0\} =\{0\}$. 
\end{lemma}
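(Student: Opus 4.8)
The plan is to verify the three ingredients in the definition of a $^*$-regular ring one at a time, observing that each is inherited directly from the ambient structure rather than requiring any new argument. First, since $A$ is by hypothesis a $^*$-subalgebra of $B(H)$, it is automatically closed under addition, multiplication, and the adjoint, so it is a $^*$-ring with no further work. Second, regularity is not something to be established afresh: it is precisely condition (1) of Theorem \ref{equi}, which holds by the very definition of an R$^*$-algebra. Thus the only genuine content lies in the properness (positive-definiteness) condition $\{x\in A\mid x^*x=0\}=\{0\}$.

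For that condition, I would exploit that $A$ sits inside the C$^*$-algebra $B(H)$. Concretely, for any $x\in B(H)$ the C$^*$-identity gives $\|x\|^2=\|x^*x\|$, so $x^*x=0$ immediately forces $x=0$; alternatively, one computes $\|x\xi\|^2=\langle x^*x\,\xi,\xi\rangle$ for every $\xi\in H$, so that $x^*x=0$ yields $x\xi=0$ for all $\xi$, hence $x=0$. Either route establishes $\{x\in B(H)\mid x^*x=0\}=\{0\}$, and intersecting with $A\subseteq B(H)$ gives $\{x\in A\mid x^*x=0\}=\{0\}$, as required. Combining the three observations completes the argument.

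I do not anticipate any real obstacle here: the statement is essentially a bookkeeping matter, repackaging the defining property of R$^*$-algebras (regularity) together with the positive-definiteness of the operator norm into the vocabulary of $^*$-regular rings. The one point where a reader might pause is in recognizing that properness is automatic from the C$^*$-structure of $B(H)$, and in particular does not depend on any of the finiteness hypotheses of Theorem \ref{equi}; once that is noted, nothing else is needed.
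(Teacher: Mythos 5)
Your proposal is correct and matches the paper's argument exactly: the paper derives the lemma from the single observation that $\{x\in B(H)\mid x^*x=0\}=\{0\}$, with regularity being condition (1) of Theorem \ref{equi} and the $^*$-ring structure automatic for a $^*$-subalgebra of $B(H)$. Your inner-product computation $\lVert x\xi\rVert^2=\langle x^*x\,\xi,\xi\rangle$ is just a spelled-out justification of that same fact, so there is nothing further to add.
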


The following lemma is a consequence of an elementary fact about algebraic elements.
\begin{lemma}\label{ax1}
Let $x\in B(H)$ be an algebraic operator (i.e., $\A(x)$ is finite-dimensional). 
If $x$ is invertible in $B(H)$, then $x$ is invertible in $\A(x, 1)$. 
\end{lemma}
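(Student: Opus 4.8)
The plan is to work with the minimal polynomial of $x$. Since $x$ is algebraic, the powers $x, x^2, x^3, \dots$ span the finite-dimensional algebra $\A(x)$ and are therefore linearly dependent; equivalently, the set of polynomials $p \in \C[t]$ with $p(x) = 0$ is a nonzero ideal of $\C[t]$, hence principal, generated by a unique monic polynomial $m$, the minimal polynomial of $x$. First I would write $m(t) = t^k + c_{k-1} t^{k-1} + \dots + c_1 t + c_0$ with $c_j \in \C$.

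The crux is to show that the constant term $c_0$ is nonzero, and this is exactly where the invertibility of $x$ in $B(H)$ enters. Were $c_0 = 0$, we could factor $m(t) = t\, q(t)$ for a polynomial $q$ of degree $k-1$, giving $0 = m(x) = x\, q(x)$; left-multiplying by $x^{-1} \in B(H)$ would yield $q(x) = 0$, contradicting the minimality of $\deg m$. Hence $c_0 \neq 0$.

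Given $c_0 \neq 0$, I would rearrange $m(x) = 0$ by factoring $x$ out of every term but the constant, obtaining $x\bigl(x^{k-1} + c_{k-1} x^{k-2} + \dots + c_2 x + c_1\bigr) = -c_0$, so that the element
\[
y := -c_0^{-1}\bigl(x^{k-1} + c_{k-1}x^{k-2} + \dots + c_2 x + c_1\,1\bigr)
\]
satisfies $xy = 1$; since $y$ is a polynomial in $x$ it commutes with $x$, so $yx = 1$ as well. As $y \in \A(x, 1)$ and $1 \in \A(x, 1)$ by construction, this exhibits $x$ as invertible in $\A(x, 1)$.

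No serious obstacle arises, as the whole argument is the elementary fact that an algebraic element is invertible precisely when its minimal polynomial has nonzero constant term. The one point deserving attention is that $y$ carries the constant term $-c_0^{-1} c_1\, 1$, so in general $y \notin \A(x)$; this is precisely why the unit must be adjoined, and why the conclusion is correctly phrased for $\A(x, 1)$ rather than for $\A(x)$.
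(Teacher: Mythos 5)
Your proof is correct and is essentially the same argument as the paper's: both extract the minimal polynomial, show its constant term is nonzero by multiplying $m(x)=0$ by $x^{-1}\in B(H)$ to contradict minimality, and then rearrange $m(x)=0$ to exhibit the inverse as a polynomial in $x$ with a constant term, hence an element of $\A(x,1)$. Your closing remark about why the unit must be adjoined is a nice touch but does not change the substance.
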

\begin{proof}
Assume that $x$ is an algebraic operator that is invertible in $B(H)$.
Then the minimal polynomial $P_1(t)=\sum_{j=0}^k c_jt^j$ of $x$ must satisfy $c_0\neq0$. 
Indeed, if $c_0=0$, then the polynomial $P_2(t)=P_1(t)/t$ satisfies $P_2(x)=x^{-1}P_1(x)=0$, contradicting the minimality of $P_1$.
Thus $1=(c_0-P_1(x))/c_0 = -\sum_{j=1}^k c_jx^j/c_0$, which clearly implies that $x$ is invertible in $\A(x, 1)$. 
\end{proof}

Although we cannot make use of tools that involve completeness (like taking limits) in the setting of R$^*$-algebras in general, we may take advantage of a number of techniques from operator theory.

\begin{proposition}\label{operation}
Let $A\subset B(H)$ be an R$^*$-algebra, and $x\in A$. 
\begin{enumerate}
\item Assume that $A$ contains the unit of $B(H)$. 
Let $U\subset \C$ be an open set with $\sigma(x)\subset U$. Let $f\colon U\to \C$ be a holomorphic map. 
Then the operator $f(x)$ (which is obtained by holomorphic functional calculus in $B(H)$) belongs to $A$.  
\item Assume that $x$ is normal ($xx^*=x^*x$). Let $f\colon \sigma(x)\to \C$ be a function with $f(0)=0$.
Then the operator $f(x)$ (which is obtained by Borel functional calculus in $B(H)$) belongs to $A$.
In particular, if $x$ is self-adjoint, then the positive part $x_+:=f_+(x)$ and the negative part $x_-:=f_-(x)$ belong to $A$, where $f_+(t) := \max\{0, t\}$, $f_-(t):=\max\{-t, 0\}$, $t\in \R$. 
\item Let $x=v\lvert x\rvert$ be the polar decomposition of $x$ in $B(H)$. Then $v, \lvert x\rvert\in A$, hence $l(x), r(x)\in A$.  
\item The Moore--Penrose inverse $x^{\dagger}$ of $x$ belongs to $A$.
\end{enumerate}
\end{proposition}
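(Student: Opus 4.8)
The plan is to exploit the single structural fact underlying all four items: by Theorem \ref{equi}, every operator $x\in A$ is algebraic with finite spectrum, so each functional calculus appearing in the statement collapses to the evaluation of a suitable polynomial at $x$. The only real bookkeeping is whether that polynomial is allowed a nonzero constant term: if $A$ is unital we may use $\A(x,1)\subseteq A$, while if we only know $f(0)=0$ we must arrange the interpolating polynomial to have no constant term so that it lands in $\A(x)\subseteq A$.

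For item (1) I would note that since $x$ is algebraic its minimal polynomial factors as $\prod_i (t-\lambda_i)^{n_i}$ over the finite set $\sigma(x)=\{\lambda_i\}$. The holomorphic functional calculus $f(x)$ depends only on the germs of $f$ at the $\lambda_i$ up to order $n_i-1$, so Hermite interpolation produces a polynomial $P$ with $f(x)=P(x)$; since $A$ contains the unit, $P(x)\in\A(x,1)\subseteq A$. For item (2), normality forces $x$ to be diagonalizable, so its minimal polynomial has simple roots and $f(x)$ is determined by the values $f(\lambda_i)$ alone. Here I would choose, by Lagrange interpolation on the node set $\sigma(x)\cup\{0\}$, a polynomial $P$ with $P(0)=0$ and $P(\lambda_i)=f(\lambda_i)$ for all $i$ (the extra node $0$ costs nothing, and when $0\in\sigma(x)$ the hypothesis $f(0)=0$ makes the two conditions compatible). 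Then $P$ has no constant term, so $f(x)=P(x)\in\A(x)\subseteq A$, and the statement about $x_{\pm}$ follows by applying this to $f_{\pm}$, which vanish at $0$.

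Items (3) and (4) I would deduce from (2), reusing the computation already carried out in the $(6)\Rightarrow(1)$ step of the proof of Theorem \ref{equi}. Since $x^*x$ is self-adjoint with finite spectrum and the square-root function vanishes at $0$, item (2) gives $\lvert x\rvert=(x^*x)^{1/2}\in A$; applying (2) to the function $P_2$ with $P_2(t)=t^{-1}$ on $\sigma(\lvert x\rvert)\setminus\{0\}$ and $P_2(0)=0$ gives $v=x\,P_2(\lvert x\rvert)\in A$, whence $l(x)=vv^*$ and $r(x)=v^*v$ lie in $A$. For (4), regularity (Theorem \ref{equi}(1)) supplies $y\in A$ with $xyx=x$, and the preliminary lemma identifies the Moore--Penrose inverse as $x^{\dagger}=r(x)\,y\,l(x)$; all three factors are in $A$ by (3), so $x^{\dagger}\in A$.

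The only genuinely delicate point is the unital/non-unital bookkeeping in item (2): the whole force of the hypothesis $f(0)=0$ is that it lets me select an interpolating polynomial with vanishing constant term, which is exactly what is needed to stay inside a possibly non-unital $A$. The multiplicity issue in (1)---Hermite rather than Lagrange interpolation---is the other spot requiring care, though it is automatic once one recalls that the holomorphic functional calculus of an algebraic operator sees only finitely many Taylor coefficients.
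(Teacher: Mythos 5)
Your proposal is correct, and for items (2)--(4) it is essentially the paper's argument: the paper also handles (2) by observing that finiteness of $\sigma(x)$ makes $f(x)$ a polynomial without constant term (it settles for a polynomial in $x$ and $x^*$, whereas your Lagrange interpolation with the auxiliary node at $0$ gives the slightly sharper statement that a polynomial in $x$ alone works, and your constant-term bookkeeping is exactly the point of the hypothesis $f(0)=0$); for (3) the paper likewise just cites the computation from the proof of Theorem \ref{equi} $(6)\Rightarrow(1)$; and for (4) it identifies $x^{\dagger}$ with the operator $y=P_2(\lvert x\rvert)v^*$ constructed there, while you instead combine regularity with the preliminary lemma's formula $x^{\dagger}=r(x)yl(x)$ --- an equally valid, marginally more structural variant. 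The genuine divergence is in (1): the paper argues via resolvents, using Lemma \ref{ax1} to place $(x-z)^{-1}$ in $\A(x,1)$ and then the finite-dimensionality (hence closedness) of $\A(x,1)$ to conclude that the contour integral defining $f(x)$ stays inside it, whereas you collapse $f(x)$ to the Hermite interpolant $P(x)$ directly. Your route avoids the closedness-plus-integral step, but the fact you invoke as ``automatic'' --- that the holomorphic calculus of an algebraic operator sees only the jets of $f$ at the roots of the minimal polynomial $m$ --- deserves one line of justification: write $f-P=m\cdot g$, note that $g$ is holomorphic on a neighborhood of $\sigma(x)$ because $\sigma(x)$ is precisely the root set of $m$ (so the singularities are removable by the interpolation conditions), and apply multiplicativity of the calculus together with $m(x)=0$. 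Both arguments land in $\A(x,1)\subset A$; the paper's is shorter once Lemma \ref{ax1} is in hand, yours is more self-contained about what functional calculus actually does to algebraic operators.
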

\begin{proof}
(1) Let $x\in A$ and $z\in \C$. 
If $x-z$ is invertible in $B(H)$, the preceding lemma implies that $(x-z)^{-1}\in \A(x, 1)$. 
Since $\A(x, 1)$ is finite-dimensional, it is a closed subalgebra of $B(H)$.  
Thus it is clear by the definition of holomorphic functional calculus that $f(x)\in \A(x, 1)\subset A$.\\
(2) Since $\sigma(x)$ is a finite set, $f(x)$ coincides with some polynomial in $x$ and $x^*$. Thus we obtain $f(x)\in \A^*(x)\subset A$.\\
(3), (4) See Proof of Theorem \ref{equi} $(6)\Rightarrow(1)$. 
The Moore--Penrose inverse of $x$ is the $y$ constructed there.
\end{proof}

We call a linear mapping (resp.\ a linear bijection) $\phi\colon A\to B$ between two $^*$-algebras a \emph{$^*$-homomorphism} (resp.\ a \emph{$^*$-isomorphism}) provided that it is multiplicative and $^*$-preserving, i.e., $\phi(xy)=\phi(x)\phi(y)$ and $\phi(x^*)=\phi(x)^*$ for any $x, y\in A$. 

In view of the theory of C$^*$-/von Neumann algebras, we also give an abstract axiom for a $^*$-algebra to be $^*$-isomorphic to an R$^*$-algebra.
Let $A$ be a $^*$-algebra (over $\C$). 
Recall that a \emph{C$^*$-norm} of $A$ is a norm $\lVert\cdot \rVert\colon A\to [0, \infty)$ satisfying $\lVert xy\rVert\leq \lVert x\rVert\lVert y\rVert$ (submultiplicativity) and $\lVert x^*x\rVert = \lVert x\rVert^2$ (C$^*$-identity) for all $x, y\in A$. 
A \emph{pre-C$^*$-algebra} is a $^*$-algebra endowed with a C$^*$-norm. 
It is clear that every $^*$-subalgebra of $B(H)$ is a pre-C$^*$-algebra.

The (noncommutative) Gelfand--Naimark theorem \cite[Theorem I.9.18]{Ta} states that a complete pre-C$^*$-algebra is $^*$-isomorphic to a closed $^*$-subalgebra of $B(H)$. 
On the other hand, Sakai's theorem \cite[Theorem III.3.5]{Ta} states that a C$^*$-algebra $A$ is $^*$-isomorphic to a von Neumann algebra if and only if $A$ is a dual Banach space.  
\begin{theorem}
Let $A$ be a pre-C$^*$-algebra. 
Then the following conditions are mutually equivalent. 
\begin{itemize}
\item  $A$ is $^*$-isomorphic to an R$^*$-algebra. 
\item $A$ is regular. 
\item $A$ is algebraic.
\end{itemize}
\end{theorem}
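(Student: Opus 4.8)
The plan is to reduce everything to Theorem \ref{equi} by realizing $A$ concretely inside some $B(H)$. Since $A$ carries a C$^*$-norm, its involution is automatically isometric: submultiplicativity and the C$^*$-identity give $\lVert x\rVert^2=\lVert x^*x\rVert\leq \lVert x^*\rVert\,\lVert x\rVert$, hence $\lVert x\rVert\leq\lVert x^*\rVert$, and swapping $x$ and $x^*$ yields $\lVert x^*\rVert=\lVert x\rVert$. Therefore multiplication and involution extend continuously to the completion $\overline{A}$, on which the C$^*$-identity persists by density and continuity, so that $\overline{A}$ is a (complete) C$^*$-algebra. By the Gelfand--Naimark theorem \cite[Theorem I.9.18]{Ta}, $\overline{A}$ is $^*$-isomorphic to a closed $^*$-subalgebra of $B(H)$ for some Hilbert space $H$. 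Restricting this $^*$-isomorphism to $A$ yields a $^*$-isomorphism of $A$ onto a (possibly nonclosed) $^*$-subalgebra $A'\subset B(H)$.

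Next I would record that both regularity and algebraicity are invariant under any algebra isomorphism $\phi$, in particular under a $^*$-isomorphism. If $xyx=x$ in the source, then $\phi(x)\phi(y)\phi(x)=\phi(x)$, so regularity transfers; and $\phi$ carries $\A(x)$ isomorphically onto $\A(\phi(x))$, so finite-dimensionality of the singly generated subalgebras transfers as well. Consequently $A$ is regular if and only if $A'$ is regular, and $A$ is algebraic if and only if $A'$ is algebraic.

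Now $A'$ is a $^*$-subalgebra of $B(H)$, so Theorem \ref{equi} applies to it, and its conditions (1) and (2) give that $A'$ is regular if and only if $A'$ is algebraic; combined with the previous paragraph this establishes the equivalence of regularity and algebraicity for $A$. For the remaining clause, if $A$ is regular then $A'$ is a regular $^*$-subalgebra of $B(H)$, i.e.\ an R$^*$-algebra, and the $^*$-isomorphism $A\cong A'$ exhibits $A$ as $^*$-isomorphic to an R$^*$-algebra. Conversely, if $A$ is $^*$-isomorphic to an R$^*$-algebra $R\subset B(K)$, then $R$ is regular by condition (1) of Theorem \ref{equi}, and pulling regularity back along the $^*$-isomorphism shows that $A$ is regular. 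This closes the cycle among the three conditions.

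I expect no genuine obstacle: the argument is an assembly of the completion procedure, the Gelfand--Naimark embedding, and Theorem \ref{equi}. The only point that deserves care is the verification that the completion of a pre-C$^*$-algebra is again a C$^*$-algebra, so that Gelfand--Naimark is applicable; this is routine once one notes, as above, that the involution is isometric. Everything else is the transfer of purely algebraic properties across an isomorphism.
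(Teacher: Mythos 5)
Your proposal is correct and follows essentially the same route as the paper: the paper's proof is precisely ``complete $A$ to a C$^*$-algebra, embed it into $B(H)$ via the Gelfand--Naimark theorem, and invoke Theorem \ref{equi}'', with the transfer of regularity and algebraicity across the $^*$-isomorphism left implicit. Your version merely spells out the routine details (isometry of the involution, invariance of conditions (1) and (2) of Theorem \ref{equi} under isomorphism) that the paper compresses into ``the equivalence easily follows''.
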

\begin{proof}
The completion of $A$ is a C$^*$-algebra, which is $^*$-isomorphic to a closed $^*$-subalgebra of some $B(H)$ by the Gelfand--Naimark theorem. Now the equivalence easily follows from Theorem \ref{equi}.
\end{proof}
We call a pre-C$^*$-algebra with the above equivalent conditions an \emph{abstract R$^*$-algebra}.
(This can also be referred to as an \emph{algebraic pre-C$^*$-algebra}, or a \emph{regular pre-C$^*$-algebra}.)
As is the case for C$^*$-algebras, the above theorem ensures that in many situations there is no need to distinguish between abstract R$^*$-algebras and R$^*$-algebras in $B(H)$ (see also Proposition \ref{isometry} below, which implies the uniqueness of C$^*$-norm of an R$^*$-algebra).

\begin{remark}
In this paper, an element of a pre-C$^*$-algebra is said to be \emph{positive} if it is positive in the completion (as an operator of a C$^*$-algebra).
\end{remark}

In what follows we collect some basic properties of ideals, quotients and $^*$-homomorphisms of R$^*$-algebras, which were first proved in \cite{ST}. We give a proof for the reader's convenience.
Our proof is based on C$^*$-algebra theory. 
\begin{proposition}[{\cite{ST}}]\label{ideal}
Let $I$ be a (two-sided) ideal of an R$^*$-algebra $A$. 
Then $I$ is closed under $^*$-operation and in the norm topology in $A$, and the quotient by the ideal forms an R$^*$-algebra. 
\end{proposition}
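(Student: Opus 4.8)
The plan is to work inside the ambient space and exploit the C$^*$-algebra structure of the norm closure. Embed $A$ as a $^*$-subalgebra of $B(H)$ and let $\overline{A}$ denote its norm closure, a C$^*$-algebra. Since $I$ is a two-sided ideal of $A$, its norm closure $\overline{I}$ is a closed two-sided ideal of $\overline{A}$ (approximate $a\in\overline{A}$ and $y\in\overline{I}$ by elements of $A$ and $I$); in particular $\overline{A}/\overline{I}$ is again a C$^*$-algebra. I would establish the three assertions in the order (i) $I=I^*$, (ii) $I=\overline{I}\cap A$ (which is exactly the statement that $I$ is norm-closed in $A$), and (iii) $A/I$ is an R$^*$-algebra. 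Assertion (i) is quick: for $x\in I$ with polar decomposition $x=v\lvert x\rvert$, Proposition \ref{operation}(3) gives $v,\lvert x\rvert\in A$ and $v^*x=v^*v\lvert x\rvert=r(x)\lvert x\rvert=\lvert x\rvert$, so $\lvert x\rvert=v^*x\in I$ and hence $x^*=\lvert x\rvert v^*\in I$.

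For (ii), the inclusion $I\subseteq\overline{I}\cap A$ is clear, so let $x\in\overline{I}\cap A$. Using the Moore--Penrose inverse $x^{\dagger}\in A$ (Proposition \ref{operation}(4)) together with $x^{\dagger}x=r(x)$ and the fact that $\overline{I}$ is a left ideal of $\overline{A}$, I obtain that the projection $p:=r(x)$ lies in $\overline{I}\cap A$. Since $x=xr(x)=xp$, it suffices to show $p\in I$. Choose $y_n\in I$ with $y_n\to p$; then $z_n:=py_np\in I\cap pAp$ and $z_n\to p$, so for large $n$ we have $\lVert z_n-p\rVert<1$ and $z_n$ is invertible in the unital C$^*$-algebra $p\overline{A}p$ (whose unit is $p$), i.e.\ invertible as an operator on $pH$. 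The corner $pAp$ is a $^*$-subalgebra of $A$, hence an R$^*$-algebra (Corollary \ref{subalg}) containing the unit $p=\operatorname{id}_{pH}$ of $B(pH)$; applying Lemma \ref{ax1} in $B(pH)$, the algebraic invertible element $z_n$ has its inverse $w$ inside $\A(z_n,p)\subseteq pAp\subseteq A$. Then $p=wz_n\in I$ because $z_n\in I$, whence $x=xp\in I$.

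For (iii), the kernel of the composite $^*$-homomorphism $A\hookrightarrow\overline{A}\to\overline{A}/\overline{I}$ equals $A\cap\overline{I}=I$ by (ii), so $A/I$ is $^*$-isomorphic to a $^*$-subalgebra of the C$^*$-algebra $\overline{A}/\overline{I}$, and thus to a $^*$-subalgebra of some $B(K)$; as every element of $A/I$ is the image of an algebraic element of $A$ under the quotient map, it is algebraic, and Theorem \ref{equi}(2) identifies $A/I$ as an R$^*$-algebra. The main obstacle is step (ii), and more precisely the claim that a projection lying in $\overline{I}\cap A$ already lies in $I$: closedness cannot be extracted from the ideal property alone and forces one to pass to the corner $pAp$, where the interplay between invertibility in the completion and Lemma \ref{ax1} (available precisely because every element is algebraic) pushes the inverse back into $A$. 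Once this is in place, both the self-adjointness and the quotient statement are comparatively routine.
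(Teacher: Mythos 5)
Your proof is correct. Its skeleton --- polar decomposition for $I=I^*$, a support-projection argument for norm-closedness, and passage to the completion for the quotient --- matches the paper's, and your steps (i) and (iii) are essentially identical to the paper's (with the small added virtue that you prove the identity $I=\overline{I}\cap A$ explicitly, which the paper's quotient step only asserts as easy). The genuine divergence is in the closedness step, where both proofs in fact aim at the same projection: since $s(\lvert x\rvert)=r(x)$, the paper's target $s(\lvert x\rvert)$ is exactly your $p$. The paper takes $x_n\in I$ with $x_n\to x$, forms the positive elements $y_n=\lvert x\rvert^{\dagger}x_n^*x_n\lvert x\rvert^{\dagger}\in I$ converging to $s(\lvert x\rvert)$, argues that $s(y_n)=s(\lvert x\rvert)$ for large $n$, and extracts the projection inside $I$ via the Moore--Penrose identity $s(y_n)=y_ny_n^{\dagger}$, never leaving $A$ in this step. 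You instead pass through the completion to obtain $p\in\overline{I}$, compress approximants to $z_n=py_np\in I\cap pAp$, invert $z_n$ in $B(pH)$ by the Neumann series, and invoke Lemma \ref{ax1} to force the inverse $w$ into $pAp$, whence $p=wz_n\in I$. Both arguments rest on the same phenomenon --- an element of $I$ norm-close to a projection is invertible in the corner, and regularity/algebraicity pushes the inverse or support back into $A$ --- but your key lemma is Lemma \ref{ax1} where the paper's is Proposition \ref{operation}(4) applied to $y_n$. The paper's variant is marginally more self-contained (no completion is needed for closedness; one could streamline yours similarly by using $x^{\dagger}x_n\to x^{\dagger}x=p$ to avoid $\overline{I}$ in this step), while yours isolates more transparently the reason the projection itself must already lie in $I$.
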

\begin{proof}
Let $I\subset A$ be an ideal. 
Let $x\in I$ and let $x=v\lvert x\rvert$ be its polar decomposition. 
Then $v, \lvert x\rvert\in A$ by Proposition \ref{operation} (3). 
Thus we have $I\ni v^*v\lvert x\rvert=\lvert x\rvert$ and $x^*= \lvert x\rvert v^*\in I$.

Let $x_n\in I$, $n\geq 1$, converge to $x\in A$. 
Then $x_n^*x_n\to x^*x$, hence 
\[
y_n:=\lvert x\rvert^{\dagger} x_n^*x_n\lvert x\rvert^{\dagger} \to \lvert x\rvert^{\dagger}x^*x\lvert x\rvert^{\dagger} = s(\lvert x\rvert)
\]
as $n\to \infty$. 
Since $y_n$ is positive and $s(y_n)\leq s(\lvert x\rvert^\dagger)=s(\lvert x\rvert)$, the above convergence implies that $s(y_n)= s(\lvert x\rvert)$ for a sufficiently large $n$. 
For each $n$, we have $y_n= \lvert x\rvert^{\dagger} x_n^*x_n\lvert x\rvert^{\dagger}\in I$ and hence $s(y_n)=y_ny_n^{\dagger}\in I$. 
It follows that $s(\lvert x\rvert) = s(y_n)\in I$ for a sufficiently large $n$, thus we have $x=xs(\lvert x\rvert) \in I$.
Therefore,  $I$ is closed in $A$.

Consider the completion $\widehat{A}$ of $A$, which is a C$^*$-algebra. 
It is easy to see that the closure $J$ of $I$ in $\widehat{A}$ is a closed ideal of $\widehat{A}$ and $J\cap A=I$. 
Let $\pi$ denote the quotient map $\widehat{A}\to \widehat{A}/J$. 
Then the restriction of $\pi$ to $A$ has kernel $I$, and the image $\pi(A)$ is endowed with the C$^*$-norm of $\widehat{A}/J$. 
This implies that $A/I$ forms a pre-C$^*$-algebra, which is clearly algebraic.
\end{proof}

Recall that there are C$^*$-algebras with no projections (see for example \cite[Sections IV.8 and VII.8]{Da}).
In contrast, every R$^*$-algebra has plenty of projections because of Theorem \ref{equi} (6). 
This is somehow akin to von Neumann algebras, so we introduce the following concept borrowed from the theory of von Neumann algebras.

\begin{proposition}[Reduced/induced R$^*$-algebras (cf.\ {\cite[Definition II.3.11]{Ta}})]
Let $A\subset B(H)$ be an R$^*$-algebra.
\begin{enumerate}
\item If $p\in A$ is a projection, then $pAp =\{x\in A\mid pxp=x\}$ forms an R$^*$-algebra in $B(H)$ with unit $p$.
\item If $p'\in A'=\{ y\in B(H)\mid xy=yx\text{ for all }x\in A\}$ is a projection in the commutant of $A$, then $p'A = \{p'x\mid x\in A\}\subset B(H)$ forms an R$^*$-algebra.
\end{enumerate}
\end{proposition}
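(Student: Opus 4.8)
The plan is to treat the two parts separately, since part (1) reduces immediately to an already-established corollary, whereas part (2) needs a little more care because $p'A$ is not contained in $A$.

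For part (1), I would first check that the two descriptions of $pAp$ agree: for $a\in A$ the operator $pap$ lies in $A$ (as $p\in A$) and satisfies $p(pap)p=pap$, while conversely any $x\in A$ with $pxp=x$ is manifestly of the form $pap$. A one-line computation then shows that $pAp$ is closed under sums, scalar multiples, products $(pap)(pbp)=p(apb)p$, and the $^*$-operation $(pap)^*=pa^*p$, so $pAp$ is a $^*$-subalgebra of $B(H)$ contained in $A$. Since every $^*$-subalgebra of an R$^*$-algebra is an R$^*$-algebra (Corollary \ref{subalg}), $pAp$ is an R$^*$-algebra. Finally $p=p\cdot p\cdot p\in pAp$ and $p(pap)=pap=(pap)p$ for all $a$, so $p$ is its unit. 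I expect no real obstacle here.

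For part (2), the key observation is that the map $\phi\colon A\to B(H)$, $\phi(x):=p'x$, is a surjective $^*$-homomorphism onto $p'A$. Using only that $p'$ is a self-adjoint idempotent commuting with $A$, I would verify the identities $(p'x)(p'y)=p'xy$ and $(p'x)^*=p'x^*$; these simultaneously show that $p'A$ is a $^*$-subalgebra of $B(H)$ and that $\phi$ is multiplicative and $^*$-preserving. The point worth flagging is that, unlike in part (1), $p'A$ need \emph{not} be contained in $A$, so Corollary \ref{subalg} does not apply directly; instead I want to exhibit $p'A$ as a homomorphic image of $A$ and then verify one of the \emph{intrinsic} conditions of Theorem \ref{equi}.

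The cleanest route uses condition (2): since $\phi$ is an algebra homomorphism, $\phi(\A(x))=\A(\phi(x))$ for each $x$, so $\A(p'x)$ is the image of the finite-dimensional algebra $\A(x)$ under a linear map and is therefore finite-dimensional. Thus every element of $p'A$ is algebraic, and Theorem \ref{equi} gives that $p'A$ is an R$^*$-algebra. (Alternatively one can check condition (6) directly: every self-adjoint $z\in p'A$ has the form $p'a$ with $a=a^*\in A$, obtained by writing $z=\tfrac12(p'x+p'x^*)$; since $p'$ reduces $a$, the spectrum of $p'a$ is contained in $\sigma(a)\cup\{0\}$, which is finite.) The only mild subtlety — the main obstacle, such as it is — is recognizing that the $^*$-subalgebra structure of $p'A$ must be established by hand from $p'\in A'$, and that regularity is then obtained not from Corollary \ref{subalg} but from the homomorphism property together with Theorem \ref{equi}.
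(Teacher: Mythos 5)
Your proof is correct, and it differs from the paper's only in which of the equivalent conditions of Theorem \ref{equi} gets verified. The paper disposes of both parts in one line---``just check that $pAp$ and $p'A$ form $^*$-subalgebras of $B(H)$ and that each operator in them has closed range''---i.e.\ it invokes condition (3), whereas you invoke condition (2). For part (1) your route through Corollary \ref{subalg} is if anything more economical, since $pAp\subset A$ makes any intrinsic verification superfluous. For part (2) your observation that $x\mapsto p'x$ is a surjective $^*$-homomorphism, so that $\A(p'x)=\phi(\A(x))$ is finite-dimensional as a linear image of $\A(x)$, is a clean structural substitute for the closed-range check; a fleshed-out version of the paper's route would instead note that if $y\in A$ satisfies $xyx=x$, then $(p'x)(p'y)(p'x)=p'xyx=p'x$, so $p'x$ has closed range by the lemma preceding Theorem \ref{equi}---both mechanisms exploit that the relevant condition passes to images under the map $x\mapsto p'x$. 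You also correctly flag the one point where care is genuinely needed, namely that $p'A\not\subset A$ in general so Corollary \ref{subalg} cannot be cited for (2); the paper leaves this implicit. Your parenthetical alternative via condition (6) is also sound: since $p'$ commutes with the self-adjoint $a=\tfrac12(x+x^*)$, the subspace $p'H$ reduces $a$ and $\sigma(p'a)\subset\sigma(a|_{p'H})\cup\{0\}\subset\sigma(a)\cup\{0\}$, which is finite.
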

\begin{proof} 
Easy.
Just check that $pAp$ and $p'A$ form $^*$-subalgebras of $B(H)$ and that each operator in them has closed range.
\end{proof}

If $A\subset B(H)$ is an R$^*$-algebra and $A$ has a unit $e$, then $e$ need not coincide with the unit of $B(H)$. 
However, it is easy to check that $e$ is a projection and $A=eAe$ can be considered as a $^*$-subalgebra of $B(eH)$, and the embedding $A=eAe\to B(eH)$ preserves the unit. 
With this in mind, when we consider a unital R$^*$-algebra in $B(H)$, we may always assume that $1=\operatorname{id}_H\in A$. 
This enables us to consider functional calculus as in Proposition \ref{operation} (2) for functions $f$ without assuming $f(0)=0$ if $A$ is unital.

Let $A\subset B(H)$ be an R$^*$-algebra without unit. 
Let $\widetilde{A}$ denote the algebra $A+\C\cdot\operatorname{id}_H\subset B(H)$, which is called the \emph{unitization} of $A$. 
By Theorem \ref{equi} (4), $\widetilde{A}$ is a unital R$^*$-algebra.

\begin{proposition}[{\cite{ST}}]\label{isometry}
Let $A$ be an R$^*$-algebra and $K$ be a Hilbert space. 
\begin{enumerate}
\item Every injective $^*$-homomorphism $\phi\colon A\to B(K)$ is an isometry. 
\item For every $^*$-homomorphism $\phi\colon A\to B(K)$ the kernel of $\phi$ is an ideal of $A$. 
\item Let $\pi$ be the canonical projection of $A$ onto $A/\ker \phi$. 
There exists a unique injective $^*$-homomorphism $\phi_0\colon A/\ker \phi\to B(K)$ with $\phi=\phi_0\circ\pi$. 
\end{enumerate}
\end{proposition}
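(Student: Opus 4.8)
The plan is to dispose of parts (2) and (3) as formalities and to concentrate on (1), which carries all the content. Since $A\subseteq B(H)$ and $\phi(A)\subseteq B(K)$ are concrete algebras of operators, the relevant norm on each side is the operator norm, and for a self-adjoint operator the norm equals the largest modulus of a point of its spectrum. Thus to prove that $\phi$ is isometric I would first reduce to self-adjoint elements via the C$^*$-identity: once $\|\phi(a)\|=\|a\|$ is known for every self-adjoint $a\in A$, the identities $\|\phi(x)\|^2=\|\phi(x^*x)\|$ and $\|x\|^2=\|x^*x\|$ promote it to arbitrary $x$. Everything then comes down to comparing $\sigma(a)$ with $\sigma(\phi(a))$.

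For the upper bound $\|\phi(a)\|\le\|a\|$ I would show that the nonzero spectrum can only shrink, i.e.\ $\sigma(\phi(a))\setminus\{0\}\subseteq\sigma(a)$. Fix a self-adjoint $a$ and a scalar $\mu\neq 0$ with $\mu\notin\sigma(a)$. Then $a-\mu$ is invertible in $B(H)$ and algebraic, so by Lemma \ref{ax1} it is invertible already inside the finite-dimensional algebra $\A(a,1)\subseteq\widetilde A$. Applying the (unital) extension of $\phi$ to $\widetilde A$ then shows $\phi(a)-\mu$ is invertible in $B(K)$ — here $\mu\neq 0$ is what disposes of the part of $K$ on which $\phi$ of the unit may act trivially — so $\mu\notin\sigma(\phi(a))$. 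Since $\|a\|$ is the largest modulus in $\sigma(a)$, this yields contractivity on self-adjoint elements, and hence on all of $A$. The crucial point is that this replaces the usual passage to the C$^*$-completions — unavailable a priori, as boundedness of $\phi$ is exactly what is being proved — by a purely algebraic inversion inside $\A(a,1)$ that needs no limiting process.

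For the matching lower bound I would use injectivity to recover the nonzero spectrum, proving $\sigma(a)\setminus\{0\}\subseteq\sigma(\phi(a))$. Fix $\lambda\in\sigma(a)$ with $\lambda\neq 0$. Because $\lambda\neq 0$, the characteristic function $\chi_{\{\lambda\}}$ vanishes at $0$, so Proposition \ref{operation}(2) places the spectral projection $p:=\chi_{\{\lambda\}}(a)$ inside $A$, and $p\neq 0$ since $\lambda$ is an eigenvalue of $a$. Applying $\phi$ to $ap=\lambda p$ gives $\phi(a)\phi(p)=\lambda\phi(p)$, and injectivity forces $\phi(p)\neq 0$, so $\lambda$ is an eigenvalue of $\phi(a)$ and lies in $\sigma(\phi(a))$. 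Combining the two inclusions gives equality of the nonzero spectra, whence $\|\phi(a)\|=\|a\|$ for every self-adjoint $a$, and $\phi$ is isometric.

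Parts (2) and (3) are then routine. The kernel of any algebra homomorphism is a two-sided ideal, which settles (2) (and by Proposition \ref{ideal} it is automatically self-adjoint and norm-closed, and the quotient is again an R$^*$-algebra). For (3) I would set $I:=\ker\phi$ and invoke the universal property of the quotient: the prescription $\phi_0(\pi(x)):=\phi(x)$ is well defined precisely because $I=\ker\phi$, is a $^*$-homomorphism inherited from $\phi$, is injective because its kernel is trivial, and is unique since $\pi$ is surjective. The only genuine obstacle in the whole argument is the contractivity step of (1), and it is overcome exactly by exploiting that every element of an R$^*$-algebra is algebraic together with Lemma \ref{ax1}.
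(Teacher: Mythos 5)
Your proof is correct, and it follows the paper's overall strategy for (1) — reduce to self-adjoint elements via the C$^*$-identity and compare spectra using algebraicity and Lemma \ref{ax1} in place of any passage to completions — but it implements one half of the spectral comparison differently. The paper first normalizes to the unital, unit-preserving situation (cutting $K$ down by the image of the unit, resp.\ extending to the unitization) and then proves the two-way equivalence ``$x$ invertible $\iff \phi(x)$ invertible'': the forward direction by applying $\phi$ to the inverse supplied by Lemma \ref{ax1}, and the reverse direction by applying Lemma \ref{ax1} to $\phi(x)$ inside the finite-dimensional algebra $\A(\phi(x), \operatorname{id}_K)=\phi(\A(x,\operatorname{id}_H))$ and pulling the inverse back through the injective $\phi$; equality of spectra of positive elements then gives $\lVert a\rVert=\sup\sigma(a)=\sup\sigma(\phi(a))=\lVert\phi(a)\rVert$. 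You prove only the inclusion $\sigma(\phi(a))\setminus\{0\}\subseteq\sigma(a)$ by this route, and obtain the reverse inclusion $\sigma(a)\setminus\{0\}\subseteq\sigma(\phi(a))$ instead from the spectral projections $\chi_{\{\lambda\}}(a)\in A$ furnished by Proposition \ref{operation} (2) (legitimate since $\lambda\neq 0$, and nonzero since a point of a finite spectrum is isolated), with injectivity forcing $\phi(\chi_{\{\lambda\}}(a))\neq 0$ and hence exhibiting $\lambda$ as an eigenvalue of $\phi(a)$; you also dispense with the unit-preserving normalization by observing directly that, for $\mu\neq 0$, the operator $\phi(a)-\mu$ acts as $-\mu$ on the complement of the range of the image unit and is therefore invertible on all of $B(K)$. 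Both devices exploit injectivity equally; yours is slightly more self-contained at that point (no identification of $\phi(\A(x,\operatorname{id}_H))$ with $\A(\phi(x),\operatorname{id}_K)$ is needed), while the paper's version yields the cleaner intermediate statement that $\phi$ preserves and reflects invertibility. Your treatment of (2) and (3) as routine consequences of the universal property of quotients together with Proposition \ref{ideal} matches the paper, which likewise dismisses them in one line.
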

\begin{proof}
$(1)$ Let $\phi\colon A\to B(K)$ be an injective $^*$-homomorphism. 
Clearly the image $\phi(A)$ is an R$^*$-algebra. 
Let $A$ act on the Hilbert space $H$. 
If $A$ is unital, then we may assume that $\operatorname{id}_H\in A$ and $\phi(\operatorname{id}_H)=\operatorname{id}_K$ by the comment preceding this proposition. 
If $A$ is not unital, then $\phi$ uniquely extends to an injective  $^*$-homomorphism $\widetilde{\phi}\colon \widetilde{A}\to B(K)$ by the formula $\widetilde{\phi}(x+z\operatorname{id}_H)= \phi(x)+z\operatorname{id}_K$, $x\in A$, $z\in \C$. 
Therefore, it suffices to consider the case $A$ is unital and $\phi(\operatorname{id}_H)=\operatorname{id}_K$. 
It follows from Lemma \ref{ax1} that for each $x\in A$, $x$ is invertible (in $\A(x, \operatorname{id}_H)$, or equivalently, in $B(H)$) if and only if $\phi(x)$ is invertible (in $\A(\phi(x), \operatorname{id}_K)$, or equivalently, in $B(K)$).
Therefore, for each positive operator $a\in A$, we obtain 
$\lVert a\rVert = \sup\sigma(a)=\sup\sigma(\phi(a)) = \lVert \phi(a)\rVert$. 
This implies that for every $x\in A$, 
\[
\lVert x\rVert^2=\lVert x^*x\rVert = \lVert \phi(x^*x)\rVert = \lVert \phi(x)^*\phi(x)\rVert =\lVert \phi(x)\rVert^2,
\]
thus $\phi$ is an isometry.\\
The rest of the statement is a consequence of a routine straightforward argument combined with Proposition \ref{ideal}.
\end{proof}

A (pre-C)$^*$-algebra $A$ is said to \emph{have a unique C$^*$-norm} if there exists a unique norm of $A$ that makes $A$ a pre-C$^*$-algebra. 
Recall that every C$^*$-algebra has a unique C$^*$-norm \cite[Corollary I.5.4]{Ta}. 
The preceding proposition shows that every R$^*$-algebra has a unique C$^*$-norm.
More generally, it is easy to see that a $^*$-subalgebra $A\subset B(H)$ has a unique C$^*$-norm if it satisfies the following property: 
\begin{itemize}
\item[(LC$^*$)] The closure of the ($^*$-)subalgebra $\A(a)$ in $B(H)$ is contained in $A$ for every positive element $a\in A$. 
\end{itemize}
(Proof: Let $\lVert \cdot\rVert$ denote the operator norm of $B(H)$, and let $\lVert\cdot\rVert_1$ be any C$^*$-norm of $A$.
If $a\in A$ is positive, then the closure of $\A(a)$ in $B(H)$ is a C$^*$-algebra that is a subalgebra of $A$. 
Since a C$^*$-algebra has a unique C$^*$-norm, we have $\lVert a\rVert=\lVert a\rVert_1$. 
For a general $x\in A$, by the C$^*$-identity and the positivity of $x^*x$ we obtain $\lVert x\rVert^2=\lVert x^*x\rVert =\lVert x^*x\rVert_1=\lVert x\rVert_1^2$.)
In particular, the union of any increasing net of C$^*$-algebras has a unique C$^*$-norm.

In \cite{ST} Sz\H{u}cs and Tak\'acs introduced the following property (E$^*$) for a $^*$-algebra $A$, and proved that it is equivalent to some conditions in Theorem \ref{equi} in a general setting. 
\begin{itemize}
\item[(E$^*$)] For any $^*$-subalgebra $B\subset A$, any Hilbert space $K_1$, any $^*$-homomorphism $\pi\colon B\to B(K_1)$ and any vector $v\in K_1$ such that $\pi(B)v$ is dense in $K_1$, there are a Hilbert space $K_2$, a $^*$-homomorphism $\pi_0\colon A\to B(K_2)$ and a vector $w\in K_2$ such that $\pi_0(A)w$ is dense in $K_2$ and $\langle \pi(x)v, v\rangle =\langle \pi_0(x)w, w\rangle$ for every $x\in B$. 
\end{itemize}
Let us deduce the equivalence of the conditions in Theorem \ref{equi} and (E$^*$) for pre-C$^*$-algebras without resorting to results of \cite{ST}. 
We also clarify that these conditions are related to the uniqueness of C$^*$-norm. 

\begin{theorem}\label{c*norm}
Let $A\subset B(H)$ be a $^*$-subalgebra. 
The following are equivalent. 
\begin{enumerate}
\item $A$ is an R$^*$-algebra. 
\item Every $^*$-subalgebra of $A$ has a unique C$^*$-norm. 
\item For every positive element $a\in A$ the $^*$-subalgebra $\A(a)=\A^*(a)\subset A$ has a unique C$^*$-norm. 
\item $A$ satisfies $(E^*)$. 
\end{enumerate}
\end{theorem}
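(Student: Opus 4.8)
The plan is to prove the cycle $(1)\Rightarrow(2)\Rightarrow(3)\Rightarrow(1)$ and then the two implications $(1)\Rightarrow(4)$ and $(4)\Rightarrow(1)$ separately. The implication $(1)\Rightarrow(2)$ is immediate: by Corollary \ref{subalg} every $^*$-subalgebra of the R$^*$-algebra $A$ is again an R$^*$-algebra, and every R$^*$-algebra has a unique C$^*$-norm (the consequence of Proposition \ref{isometry} recorded just above). Then $(2)\Rightarrow(3)$ is trivial, since for a positive $a$ one has $\A(a)=\A^*(a)$, which is a $^*$-subalgebra of $A$ and so falls under (2).

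For $(3)\Rightarrow(1)$ I would argue by contraposition. If $A$ is not an R$^*$-algebra, Theorem \ref{equi} supplies a self-adjoint $a_0\in A$ with infinite spectrum, whence $a:=a_0^2\in A$ is positive and still has infinite spectrum. Because $\sigma(a)$ is infinite, no nonzero polynomial vanishes on it, so $p\mapsto p(a)$ identifies $\A(a)=\A^*(a)$ with the algebra of complex polynomials having zero constant term, and this abstract algebra carries the operator C$^*$-norm $\|p(a)\|=\sup_{\sigma(a)}|p|$. I would produce a second, inequivalent C$^*$-norm by realizing the generator with a strictly larger spectral radius: the map $p(a)\mapsto p(a)\oplus p(\|a\|+1)$ into $B(H\oplus\C)$ is an injective $^*$-representation (injective because its generator $a\oplus(\|a\|+1)$ still has infinite spectrum), and the induced C$^*$-norm assigns to $a$ the value $\|a\|+1\neq\|a\|$. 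Thus $\A(a)$ has no unique C$^*$-norm, contradicting (3).

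For $(1)\Rightarrow(4)$ I would pass to completions and invoke the GNS construction. Given $B\subset A$, a $^*$-homomorphism $\pi\colon B\to B(K_1)$ and a cyclic vector $v$, first note that $\pi$ is contractive: $\ker\pi$ is an ideal and $B/\ker\pi$ is an R$^*$-algebra (Propositions \ref{ideal} and \ref{isometry}), so $\pi$ factors through an injective isometric $^*$-homomorphism. Hence $\phi(x):=\langle\pi(x)v,v\rangle$ is a bounded positive functional on $B$ and extends to a state on the operator-norm completion $\widehat{B}$, which sits as a C$^*$-subalgebra of $\widehat{A}$. By the standard extension theorem for states on C$^*$-subalgebras, this state extends to a state $\psi$ on $\widehat{A}$; restricting the GNS triple of $\psi$ to $A$ yields the required $\pi_0,K_2,w$, with density of $\pi_0(A)w$ following from density of $A$ in $\widehat{A}$ and continuity of the GNS representation.

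The implication $(4)\Rightarrow(1)$ is the main obstacle, and I would again argue contrapositively. The naive attempt—feeding (E$^*$) the character $p(a)\mapsto p(s_0)$ at a point $s_0\notin\sigma(a)$—fails, because a $^*$-homomorphism out of a non-regular algebra need not be contractive, so nothing forbids $\pi_0(a)$ from having $s_0$ in its spectrum. The repair is to exploit the positivity of squares, which every $^*$-homomorphism respects automatically. Taking $b:=a^2\in A$ positive with infinite spectrum, I set $B:=\A^*(b)$ and feed (E$^*$) the character $\chi\colon q(b)\mapsto q(u_0)$ for some real $u_0<0$, with cyclic vector $1\in\C=K_1$ (cyclic since $u_0\neq0$). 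Any extension would produce $w\in K_2$ with $\langle\pi_0(b)w,w\rangle=\chi(b)=u_0<0$; but $\pi_0(b)=\pi_0(a)^*\pi_0(a)\geq0$ forces $\langle\pi_0(b)w,w\rangle\geq0$, a contradiction. Hence (E$^*$) fails whenever $A$ is not regular, which completes the equivalence.
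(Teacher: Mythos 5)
Your proposal is correct and follows essentially the same route as the paper: $(1)\Rightarrow(2)$ via Corollary \ref{subalg} and Proposition \ref{isometry}, $(3)\Rightarrow(1)$ by exhibiting two distinct C$^*$-norms on the polynomial algebra $\A^*(a)$ for a positive non-algebraic $a$, $(1)\Rightarrow(4)$ by extending the (automatically bounded, positive) functional $x\mapsto\langle\pi(x)v,v\rangle$ and applying the GNS construction, and $(4)\Rightarrow(1)$ by feeding (E$^*$) a character on $\A^*(a^2)$ that sends the square $a^2$ to a negative real. The only cosmetic deviations are that in $(3)\Rightarrow(1)$ the paper uses the two sup-norms $\sup_{[0,1]}\lvert P\rvert$ and $\sup_{[1,2]}\lvert P\rvert$ where you use the faithful augmented representation $p(a)\mapsto p(a)\oplus p(\lVert a\rVert+1)$, and in $(1)\Rightarrow(4)$ the paper extends the functional directly from $B$ to $A$ rather than detouring through the completions $\widehat{B}\subset\widehat{A}$; both variants are equally valid.
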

\begin{proof}
Proposition \ref{isometry} combined with Corollary \ref{subalg} shows $(1)\Rightarrow(2)$, and $(2)\Rightarrow(3)$ is clear.\\ 
$(3)\Rightarrow (1)$ Assume that there is a self-adjoint operator $a\in A$ with infinite spectrum. 
By replacing $a$ with $a^2$ we may assume that $a$ is positive.
We prove that $\A^*(a)$ has more than one C$^*$-norm. 
Let $A_0$ denote the algebra of polynomials $P(t) = \sum_{k=1}^n z_kt^k$ with $n\geq 1$, $z_k\in \C$.
It is clear that $A_0\ni P(t)\mapsto P(a)\in \A^*(a)$ is a $^*$-isomorphism.
The algebra of polynomials $A_0$ has more than two C$^*$-norms, thus $\A^*(a)$ has the same property.
Indeed, define e.g.\ $\lVert P(t)\rVert_j :=\sup_{\lambda\in[j-1, j]}\lvert P(\lambda)\rvert\in [0, \infty)$, $j=1, 2$. 
Then $\lVert \cdot\rVert_1$ and $\lVert \cdot\rVert_2$ are distinct C$^*$-norms of $A_0$.\\
$(1)\Rightarrow(4)$ Let $B\subset A$ be a $^*$-subalgebra, $K_1$ a Hilbert space, $\pi\colon B\to B(K_1)$ a $^*$-homomorphism, $v\in K_1$ a vector and assume that $\pi(B)v$ is dense in $K_1$. 
Corollary \ref{subalg} combined with Proposition \ref{isometry} implies that the mapping $f\colon B\to \C$ defined by $f(x):= \langle \pi(x)v, v\rangle$ is a bounded positive linear functional. 
It follows by the Hahn--Banach theorem (see \cite[Lemma III.3.2]{Ta}) that $f$ extends to a bounded positive linear functional $g$ on $A$. 
By the Gelfand--Naimark--Segal (GNS) construction \cite[Theorem I.9.14]{Ta} there are a Hilbert space $K_2$, a $^*$-homomorphism $\pi_0\colon A\to B(K_2)$ and a vector $w\in K_2$ such that $\pi_0(A)w$ is dense in $K_2$ and $g(x)=\langle \pi_0(x)w, w\rangle$ for every $x\in A$.
It follows that $\langle \pi(x)v, v\rangle =f(x)=g(x)=\langle \pi_0(x)w, w\rangle$ for every $x\in B$.\\ 
$(4)\Rightarrow(1)$ Assume that there is a self-adjoint operator $a\in A$ with infinite spectrum. 
We prove that $\A^*(a)$ does not satisfy (E$^*$), which clearly implies that $A$ does not satisfy (E$^*$), either.
Let $B$ denote the $^*$-subalgebra of $\A^*(a)$ generated by $a^2$.
Take a $^*$-homomorphism $\pi\colon B\to \C$ that is uniquely determined by $\pi(a^2)=-1$. 
(Note that $a$ has infinite spectrum and so does $a^2$, thus $a^2$ is not an algebraic operator.) 
Assume that there are a Hilbert space $K$, a $^*$-homomorphism $\pi_0\colon \A^*(a)\to B(K)$ and a vector $w\in K$ such that $\pi_0(\A^*(a))w$ is dense in $K$ and $\pi(x)=\langle \pi_0(x)w, w\rangle$ for every $x\in B$. 
Then $b:=\pi_0(a)$ is a self-adjoint operator in $B(K)$ such that 
\[
-1=\pi(a^2)= \langle \pi_0(a^2)w, w\rangle=\langle b^2w, w\rangle=\lVert bw\rVert^2\geq 0,
\]
which is a contradiction.
\end{proof}

\section{Examples}\label{example}
Although the definition of R$^*$-algebra might seem somehow restrictive, we can construct various examples of R$^*$-algebras.  
In this section{,} we give a number of examples of R$^*$-algebras, as well as {means} of obtaining new examples from known ones. 
More examples can be found in Sections \ref{com} and \ref{atom}.

First of all, for each $n\geq 1$, the $n\times n$ \emph{full matrix algebra} $\M_n=\M_n(\C)$ is the collection of all $n\times n$ complex matrices, which is an example of a finite-dimensional C$^*$-algebra and also an R$^*$-algebra.

\begin{example}[Finite-dimensional R$^*$-algebras]
Every finite-dimensional pre-C$^*$-algebra is a complete R$^*$-algebra (thus a C$^*$-algebra). 
In fact, a finite-dimensional (pre-)C$^*$-algebra is the direct sum of finitely many full matrix algebras. 
\end{example}

\begin{nonexample}
An infinite-dimensional C$^*$-algebra is never an R$^*$-algebra. 
Indeed, it is well-known that every infinite-dimensional C$^*$-algebra has an operator with infinite spectrum. See for example \cite[Exercise 4.6.14]{KR1}.
\end{nonexample}
It follows that the intersection of C$^*$- and R$^*$-algebras consists of finite-dimensional C$^*$-algebras.

\begin{example}[Sequence space and direct sums]
Let $c_{00}$ denote the algebra of complex sequences with finitely many nonzero terms. 
This acts on the sequential $\ell_2$ space by termwise multiplication, and it is clear that $c_{00}$ is an R$^*$-algebra in $B(\ell_2)$.

More generally, let $I$ be a set, and let $A_i\subset B(H_i)$ be an  R$^*$-algebra for each $i\in I$. 
Let $H$ be the direct sum of $H_i$, and let $p_i$ be the projection of $H$ onto $H_i$.
Each operator $x\in B(H_i)$ determines an operator $p_i xp_i\in B(H)$.
Then
\[
\bigoplus_{i\in I} A_i := \A^*(\{p_ixp_i\mid x\in A_i,\,\,i\in I\}) \subset B(H)
\] 
forms an R$^*$-algebra.
\end{example}

\begin{example}[Algebras of operators with finite rank]\label{compact}
Let  $F(H)$ denote the algebra of bounded linear operators with finite rank on a Hilbert space $H$.
Then $F(H)$ is an R$^*$-algebra in $B(H)$.
On the other hand, the collection $\M_{\infty}$ of infinite matrices $(a_{ij})_{i, j\geq 1}$ with finitely many nonzero entries forms an R$^*$-algebra, which acts on the sequential $\ell_2$ space in the usual manner.
It is clear that $\M_{\infty}\subset F(\ell_2)$. 
Let us observe that these are nonisomorphic.
Indeed, it is clear that $\M_{\infty}$ has countable Hamel dimension. 
However, $F(\ell_2)$ does not. 
(To prove it, consider for example the subspace of $F(\ell_2)$ that consists of the collection of operators $x$ with $x^*(\ell_2)\subset \C h$ for a fixed unit vector $h\in \ell_2$. 
This subspace is linearly isometric to $\ell_2$ by the map $x\mapsto xh$. 
Recall that $\ell_2$ has uncountable Hamel dimension.)
It is worth mentioning that both of the completions of $\M_{\infty}$ and $F(\ell_2)$ are the C$^*$-algebra $K(\ell_2)$ of compact operators. 
In Section \ref{atom} we will take a closer look at examples of this type.
\end{example}

\begin{example}[Directed unions]\label{af}
Let $I$ be a directed set and $(A_i)_{i\in I}$ be an increasing net of abstract R$^*$-algebras. 
Since every injective $^*$-homomorphism between R$^*$-algebras is isometric, the union $A=\bigcup_{i\in I} A_i$ forms an R$^*$-algebra.  
We say such an $A$ is \emph{ultramatricial} provided that each $A_i$ is finite-dimensional. 
If in addition $(A_i)_{i\in I}$ is a sequence (rather than a net), then we say $A$ is \emph{sequentially ultramatricial}.
\end{example}
In this paper{,} a C$^*$-algebra that arises as the completion of an ultramatricial R$^*$-algebra is called an \emph{AF} (approximately finite-dimensional) C$^*$-algebra.  
A C$^*$-algebra that arises as the completion of a sequentially ultramatricial R$^*$-algebra is called a \emph{sequentially AF} C$^*$-algebra. 
It is well-known that a separable AF C$^*$-algebra is sequentially AF  (see \cite[Theorem III.3.4]{Da}).  
CAUTION: In C$^*$-algebra theory it is common to consider only separable C$^*$-algebras, so an AF C$^*$-algebra often means a separable one in the literature, which is different from our choice of terminology.

\begin{proposition}[Matrix algebras]
Let $A$ be an R$^*$-algebra.  
For $n\geq 1$, the algebra $\M_n(A) = A\otimes \M_n$ of the collection of $n\times n$ matrices with entries in $A$ forms an R$^*$-algebra.  
\end{proposition}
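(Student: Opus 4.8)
The plan is to reduce the assertion to the classical ring-theoretic fact that matrix rings over von Neumann regular rings are again von Neumann regular, and then to read off the conclusion from Theorem \ref{equi}. First I would realize $\M_n(A)$ concretely as an operator algebra. Fixing a Hilbert space $H$ with $A\subset B(H)$ and writing $H^{(n)}=H\oplus\cdots\oplus H$ ($n$ copies), one identifies $B(H^{(n)})$ with the $n\times n$ matrices over $B(H)$, the involution being entrywise adjoint composed with transpose, $(x_{ij})^*=(x_{ji}^*)$. Under this identification $\M_n(A)$ is the set of matrices all of whose entries lie in $A$, and it is a routine check that this is a $^*$-subalgebra of $B(H^{(n)})$: it is closed under sums and scalar multiples, the entries of a product are finite sums of products of entries of $A$ (hence lie in $A$), and it is visibly closed under the involution.

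Once $\M_n(A)$ is recognized as a $^*$-subalgebra of $B(H^{(n)})$, Theorem \ref{equi} tells us that it suffices to verify any one of the six equivalent conditions; I would verify condition (1), von Neumann regularity. By Theorem \ref{equi}(1) the R$^*$-algebra $A$ is von Neumann regular as a ring, so every $a\in A$ admits $b\in A$ with $aba=a$. The decisive input is then the standard theorem (see, e.g., Goodearl, \emph{Von Neumann Regular Rings}) that $\M_n(R)$ is von Neumann regular whenever $R$ is. Applied to $R=A$, this produces for every $X\in\M_n(A)$ an element $Y\in\M_n(A)$ with $XYX=X$, which is exactly condition (1) of Theorem \ref{equi} for the $^*$-subalgebra $\M_n(A)\subset B(H^{(n)})$. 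Hence $\M_n(A)$ is an R$^*$-algebra.

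The crux of the matter, and the step I expect to be the main obstacle, is the regularity of the matrix ring, because the regular inverse $Y$ must have all of its entries back in $A$ rather than merely in $B(H)$. If I preferred a self-contained argument to a citation, I would prove the equivalent module-theoretic statement: over a regular ring $R$, every finitely generated submodule of the free right module $R^n$ is a direct summand. Viewing a matrix $X\in\M_n(A)$ as an $A$-linear endomorphism of the right module $A^n$ (acting on columns), its image $XA^n$ is then a direct summand, so the splitting short exact sequence $0\to\ker X\to A^n\to XA^n\to 0$ lets one construct an $A$-linear $Y$ with $XYX=X$, automatically represented by a matrix over $A$. The finitely-generated-summand statement itself I would establish by induction on the number of generators, the single-generator case resting on the fact that right annihilators in a regular ring are generated by idempotents and the inductive step on the fact that a sum of two direct summands of a module over a regular ring is again a direct summand; marshalling these two facts cleanly, while keeping track of idempotents, is where the real work lies.
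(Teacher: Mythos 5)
Your proposal follows essentially the same route as the paper: realize $\M_n(A)$ as a $^*$-subalgebra of $B(H^{\oplus n})$, quote the classical theorem that matrix rings over (von Neumann) regular rings are regular (the paper cites \cite[Theorem II.2.13]{N} where you cite Goodearl — the same fact), and conclude by Theorem \ref{equi}(1). Up to the choice of reference, that is exactly the paper's argument, and your optional module-theoretic expansion (finitely generated submodules of $A^n$ are direct summands, whence a splitting produces $Y$ with $XYX=X$) is a correct standard proof of the quoted fact.

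There is, however, one genuine gap: the nonunital case. In this paper an R$^*$-algebra is explicitly allowed to lack a unit (e.g.\ $c_{00}$, $F(H)$, $\M_{\infty}$), while the theorem you quote is stated, in Goodearl as in von Neumann, for \emph{unital} regular rings, and your self-contained argument leans on the unit at every step: the image $XA^n$ is generated by the columns $Xe_i$ only if $A^n$ has the standard basis, the fact that right annihilators are generated by idempotents is proved via the idempotent $1-yx$, and the identification of $A$-linear endomorphisms of $A^n$ with matrices over $A$ again presupposes $1\in A$. As written, your proof establishes the proposition only for unital $A$. The paper closes this in one line: the unitization $\widetilde{A}=A+\C\operatorname{id}_H$ is again an R$^*$-algebra (each of its elements has finite spectrum, Theorem \ref{equi}(4)), so $\M_n(A)\subset \M_n(\widetilde{A})$ with the latter an R$^*$-algebra by the unital case, and Corollary \ref{subalg} finishes. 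Note that this step is not automatic from pure ring theory — the unitization $R\oplus\Z$ of a regular ring is never regular — so one should either use the operator-algebraic unitization as the paper does, or repair it ring-theoretically by observing that $\M_n(A)$ is a two-sided ideal of the regular ring $\M_n(\widetilde{A})$ and that every element $x$ of an ideal $I$ of a regular ring is regular within $I$ (if $xyx=x$ then $x(yxy)x=x$ and $yxy\in I$). Either fix is a few lines, but some such step is needed for the statement as posed.
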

\begin{proof}
It is well-known that the algebraic tensor product $A\otimes \M_n$ is a regular ring for every unital regular ring $A$. 
See for example \cite[Theorem II.2.13]{N}. 
If  $A\subset B(H)$ is a unital R$^*$-algebra, then $A\otimes \M_n$ can be considered as a $^*$-subalgebra of $B(H^{\oplus n})$. 
Thus $A\otimes \M_n$ forms an R$^*$-algebra.
If $A$ is nonunital, consider $A\otimes \M_n$ as a subalgebra of $\widetilde{A}\otimes \M_n$. 
\end{proof}
In \cite[Proof of Proposition 4.7]{ST}, the above proposition was shown by means of the fact that the matrix algebra over an algebraic algebra is algebraic \cite[Theorem 9]{A}.

\begin{example}[Tensor products with ultramatricial algebras]
Let $A$, $B$ be R$^*$-algebras.
Suppose that $B$ is ultramatricial. 
Take a directed set $I$ and an increasing net $(B_i)_{i\in I}$ of finite-dimensional R$^*$-algebras with $B=\bigcup_{i\in I} B_i$. 
Since a finite-dimensional R$^*$-algebra decomposes into a direct sum of matrix algebras, the preceding proposition implies that for each $i$ the algebraic tensor product $A\otimes B_i$ forms an R$^*$-algebra.
Moreover, for any $i, j\in I$ with $i\leq j$ we may take a canonical inclusion from $A\otimes B_i$ into $A\otimes B_j$ (as algebraic tensor products). 
We define $A\otimes B$ to be the union of the increasing net $(A\otimes B_i)_{i\in I}$.
\end{example}

In particular, for an R$^*$-algebra $A$, $\M_{\infty}(A) = A\otimes \M_{\infty}$ is an R$^*$-algebra, which can be considered as the collection of $\N\times\N$ matrices with finitely many nonzero entries in $A$. 
This R$^*$-algebra is used to consider the $K_0$-group in Section \ref{countable}. 

\begin{example}[Infinite tensor products]
Let $I$ be a set and $(A_i)_{i\in I}$ be a family of unital ultramatricial R$^*$-algebras. 
Then the above argument implies that for each finite subset $F$ of $I$ we may construct the tensor product R$^*$-algebra $\bigotimes_{i\in F}A_i$. 
Moreover, if $F_1\subset F_2\subset I$ are finite subsets, there exists a natural embedding $\iota\colon\bigotimes_{i\in F_1}A_i\to \bigotimes_{i\in F_2}A_i$ determined by the condition $\iota(\bigotimes_{i\in F_1} x_i) =(\bigotimes_{i\in F_1} x_i) \otimes (\bigotimes_{j\in F_2\setminus F_1} 1_{A_j})$. 
Thus the family $(\bigotimes_{i\in F}A_i)_F$ forms an increasing net of R$^*$-algebras, whose union over all finite subsets $F\subset I$ will be written as $\bigotimes_{i\in I}A_i$.
\end{example}

\begin{example}[Group algebras]
Let $G$ be a discrete group and consider the group algebra $\C[G]$. 
As usual, we endow $\C[G]$ with the $^*$-operation that is determined by $(zg)^*= \overline{z} g^{-1}$, $z\in \C$, $g\in G$.
Since the study of C$^*$-/von Neumann algebras arising from $\C[G]$ plays a dominant role in the theory of operator algebras, it is natural to ask when $\C[G]$ forms an R$^*$-algebra.
Recall that $G$ is said to be \emph{locally finite} if every finite subset generates a finite subgroup of $G$.
It is easy to see that the group algebra $\C[G]$ forms an R$^*$-algebra if and only if $G$ is locally finite. 
Indeed, if $\C[G]$ forms an R$^*$-algebra, then it is algebraic, which plainly leads to local finiteness of $G$. 
(For a finite subset $F\subset G$, consider the algebra generated by $\sum_{g\in F} g$.) 
The converse implication is even easier: 
The left regular representation gives a C$^*$-norm of $\C[G]$, and it is clear that every element in $\C[G]$ is algebraic if $G$ is locally finite. 
More generally, in \cite[Theorem 6.5]{ST} it was proved that the convolution $^*$-algebra of complex-valued compactly supported continuous functions on a locally compact group $G$ is an R$^*$-algebra if and only if $G$ is discrete and locally finite. 
Von Neumann regularity of more general group algebras (not necessarily over $\C$) is discussed in \cite[Problem 8]{Kap1}.
\end{example}

\subsection{Problems of Kurosh and Kadison}
In \cite{ST}, Sz\H{u}cs and Tak\'acs found that Kadison's Problem \ref{kadison} is related to Kurosh's problem. 
Let us revisit it. 
Recall that an algebra $A$ is defined to be \emph{locally finite-dimensional} if every finite subset of $A$ generates a finite-dimensional subalgebra of $A$. 
The classical Kurosh's problem asks whether every algebraic algebra $A$ (i.e., such that {every} single element of $A$ generates a finite-dimensional subalgebra of $A$) is locally finite-dimensional. 
This problem has a negative answer in the general setting. 
However, it is known that for certain classes of algebras (e.g., the class of PI-algebras), Kurosh's problem has a positive answer. 
See Kaplansky's list of problems \cite{Kap1} for references and several related topics.  
We would like to consider Kurosh's problem for the class of R$^*$-algebras. 
Since an R$^*$-algebra is algebraic by Theorem \ref{equi}, this is equivalent to asking whether every R$^*$-algebra is locally finite-dimensional.
This problem clearly coincides with Kadison's Problem \ref{kadison}.

\begin{lemma}
An R$^*$-algebra $A$ is locally finite-dimensional if and only if $A$ is ultramatricial.
\end{lemma}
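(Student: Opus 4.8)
The plan is to prove both implications directly. The implication ``ultramatricial $\Rightarrow$ locally finite-dimensional'' is essentially immediate and I would dispatch it first. If $A = \bigcup_{i \in I} A_i$ is a directed union of finite-dimensional R$^*$-algebras, then any finite subset $\{x_1, \ldots, x_n\} \subset A$ lies in some single $A_i$ by directedness (choose $i_k$ with $x_k \in A_{i_k}$ and take a common upper bound $i$). The subalgebra generated by $\{x_1, \ldots, x_n\}$ is then contained in the finite-dimensional $A_i$, hence is itself finite-dimensional. So every finitely generated subalgebra is finite-dimensional, which is exactly local finite-dimensionality.

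The substantial direction is ``locally finite-dimensional $\Rightarrow$ ultramatricial.'' Here I would organize $A$ as the directed union of the subalgebras it generates on finite subsets. The natural move is to consider the family of all finitely generated $^*$-subalgebras of $A$, directed by inclusion, whose union is clearly all of $A$. The key point is that each such finitely generated piece is a \emph{finite-dimensional} $^*$-algebra: by hypothesis a finite subset generates a finite-dimensional (not necessarily $^*$-closed) subalgebra, so I first need to check that one can arrange the generating set to be closed under adjoints. Given a finite generating set $S = \{x_1, \ldots, x_n\}$, I would replace it by $S \cup S^* = \{x_1, \ldots, x_n, x_1^*, \ldots, x_n^*\}$, which is still finite; the $^*$-algebra $\A^*(x_1, \ldots, x_n)$ is exactly the ordinary algebra generated by $S \cup S^*$, and by local finite-dimensionality this is finite-dimensional. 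A finite-dimensional $^*$-subalgebra of $B(H)$ is automatically norm-closed, hence a finite-dimensional C$^*$-algebra, and in particular an R$^*$-algebra (indeed each such algebra is a finite direct sum of full matrix algebras). Thus $A$ is the directed union of finite-dimensional R$^*$-algebras, i.e., ultramatricial in the sense of Example~\ref{af}.

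I expect the only real subtlety, and hence the main obstacle, to be the bookkeeping that the directed union in Example~\ref{af} is built from \emph{abstract} R$^*$-algebras sitting inside $A$ with the inclusions being injective $^*$-homomorphisms, so that the union genuinely carries an R$^*$-structure. This is reassured by Proposition~\ref{isometry}: every injective $^*$-homomorphism between R$^*$-algebras is isometric, so the inclusion maps between the finite-dimensional pieces are automatically compatible with the ambient norm, and no compatibility issue arises. The remaining verification—that local finite-dimensionality forces each finitely generated $^*$-subalgebra to be finite-dimensional—uses only the elementary observation $\A^*(x_1,\ldots,x_n) = \A(x_1,\ldots,x_n,x_1^*,\ldots,x_n^*)$, so that the hypothesis applied to the enlarged finite set gives finite dimension. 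Everything else is routine directedness of the inclusion-ordered family of finitely generated $^*$-subalgebras.
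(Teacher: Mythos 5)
Your proof is correct and follows essentially the same route as the paper: the forward direction uses the finite subsets $F$ of $A$ (the paper's net $(\A(F\cup F^*))_{F\in\mathcal{F}}$ is exactly your family of finitely generated $^*$-subalgebras, via the identity $\A^*(S)=\A(S\cup S^*)$), and the converse uses directedness to place any finite set inside a single finite-dimensional $A_i$. The extra verifications you include (norm-closedness of finite-dimensional $^*$-subalgebras, compatibility of inclusions via Proposition~\ref{isometry}) are sound but not needed beyond what the paper records.
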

\begin{proof}
Let $A$ be locally finite-dimensional. 
Consider the directed set $\mathcal{F}$ made of all finite subsets of $A$, ordered by inclusion. 
Then $A$ is the union of the increasing net $(\A(F\cup F^*))_{F\in \mathcal{F}}$ of finite-dimensional R$^*$-algebras.
Conversely, let $A$ be equal to the union of an increasing net $(A_i)_{i\in I}$ of finite-dimensional R$^*$-subalgebras. 
For each finite subset $F\subset A$, we may find some $i\in I$ such that $F\subset A_i$, and thus $\A(F)\subset A_i$ is 
finite-dimensional. 
\end{proof}

Therefore, Problem \ref{kadison}{,} as well as Kurosh's problem for R$^*$-algebras{,} is equivalent to asking whether every R$^*$-algebra is ultramatricial.

\section{Lattice of projections as noncommutative Boolean algebras}\label{com}
Let us recall some definitions of lattice theory for beginners. 
A \emph{lattice} is a partially ordered set $\P$ such that every pair $p, q\in \P$ has the least upper bound $p\vee q\in \P$ and the greatest lower bound $p\wedge q\in \P$. 
A mapping (resp.\ bijection) $\psi$ between two lattices is called a \emph{lattice homomorphism} (resp.\ a \emph{lattice isomorphism}) if it satisfies $\psi(p\vee q)=\psi(p)\vee\psi(q)$ and $\psi(p\wedge q)=\psi(p)\wedge\psi(q)$ for every pair of elements $p, q$.
A lattice $\P$ with greatest element $1$ and least element $0$ is \emph{complemented} if for every $p\in \P$ there exists $q\in \P$ with $p\vee q=1$ and $p\wedge q=0$. Such a $q$ is called a \emph{complement} of $p$.
A lattice $\P$ is \emph{modular} if $p\vee(q\wedge r)=(p\vee q)\wedge r$ for all $p, q, r\in \P$ with $p\leq r$.

An \emph{orthocomplemented lattice} is a complemented lattice $\P$ with a bijection $\P\ni p\mapsto p^{\perp}\in \P$ (called the orthocomplementation) with the following properties. 
\begin{itemize}
\item $(p^{\perp})^{\perp} = p$ for all $p\in\P$.
\item $p^{\perp}$ is a complement of $p$ for all $p\in\P$. 
\item $p\leq q\iff p^{\perp}\geq q^{\perp}$ for all $p, q\in\P$.
\end{itemize}
An \emph{orthoisomorphism} $\psi$ between two orthocomplemented lattices is a lattice isomorphism  such that $\psi(p^{\perp}) = \psi(p)^{\perp}$ for all $p$.

A family $\emptyset\neq\S\subset 2^{X}$ of subsets of a set $X$ is called a \emph{field of sets} if $A\cap B$, $A\cup B$, $A^{c}\in \S$ for any $A, B\in \S$. 
A field of sets forms an orthocomplemented lattice relative to the order by inclusion and the orthocomplementation by set-complement.
A \emph{Boolean algebra} is an orthocomplemented lattice that is orthoisomorphic to a field of sets.
It is clear that each element of a Boolean algebra has a unique complement. 
It follows that a lattice isomorphism between two Boolean algebras is an orthoisomorphism. 
The book \cite{GH} is a gentle introduction to the theory of Boolean algebras.
A \emph{generalized field of sets} is a family $\emptyset\neq\S\subset 2^{X}$ of subsets of a set $X$ such that $A\cap B$, $A\cup B$, $A\setminus B \in\S$ for any $A, B\in \S$. 
A lattice that is lattice isomorphic to a generalized field of sets is called a \emph{generalized Boolean algebra}.

Recall that for a von Neumann algebra $M$ the collection $\P(M)$ of projections forms an orthocomplemented lattice.
It is well-known that the projection lattice of a von Neumann algebra is modular if and only if the algebra is of finite type. 
Indeed, if a von Neumann algebra $M$ is not finite, then $M$ has $B(\ell_2)$ as a $\sigma$-weakly closed $^*$-subalgebra, but $\P(B(\ell_2))$ is not modular by \cite[Solution 14]{Ha}. 
The proof of modularity of $\P(M)$ when $M$ is of finite type can be found in e.g.\ \cite[Theorem V.1.37]{Ta}.

Let $A$ be an R$^*$-algebra with unit. 
By von Neumann's theory in \cite[Part II]{N}, the collection of principal right ideals of $A$, ordered by inclusion, forms a complemented modular lattice. 
Moreover, by \cite[Theorem II.4.5]{N} together with Lemma \ref{starregular}{,} this lattice can be identified with the lattice of projections $\P(A)=\{p\in A\mid p=p^2=p^*\}$.
Here $\P(A)$ is endowed with the usual order, namely, for two projections $p, q\in B(H)$, $p\leq q$ means $pH\subset qH$, or equivalently, $pq=p$.
The correspondence between principal right ideals and projections can be described in the following manner: 
Each principal right ideal of $A$ coincides with $pA$ for a unique $p\in \P(A)$.
Observe that $p^{\perp}:= 1-p$, $p\in A$, determines an orthocomplementation of $\P(A)$. 

Let $A$ be a unital R$^*$-algebra in $B(H)$. 
Then each element in $\P(A)$ can be identified with its range $pH$, which is a closed subspace of the Hilbert space $H$. 
The lattice structure of $\P(A)$ can be written more explicitly:

\begin{proposition}
Let $A$ be a unital R$^*$-algebra in $B(H)$ with $\operatorname{id}_H\in A$. 
For $p, q\in \P(A)$, 
\begin{itemize}
\item $p\vee q\in \P(A)$ is the projection onto $pH+ qH$; 
\item $p\wedge q\in \P(A)$ is the projection onto $pH\cap qH$; and 
\item $p^{\perp}\in \P(A)$ is the projection onto the orthogonal complement $(pH)^{\perp}=\{h\in H\mid \langle h, k\rangle=0\text{ for all }k\in pH\}$. 
\end{itemize}
\end{proposition}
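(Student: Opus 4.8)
The plan is to exploit the fact, recorded just before the statement, that the order on $\P(A)$ is \emph{by definition} inclusion of ranges: $r\le p$ if and only if $rH\subseteq pH$. Consequently $p\wedge q$ (resp.\ $p\vee q$) is nothing but the greatest (resp.\ least) projection in $A$ whose range is contained in $pH\cap qH$ (resp.\ contains $pH+qH$). So the entire proposition reduces to a single issue: showing that the orthogonal projections of $B(H)$ onto $pH\cap qH$ and onto $pH+qH$ actually lie in $A$. Once membership is established, extremality is automatic from range inclusion, and the identification follows. The third bullet needs no work: the orthocomplementation was already identified as $p^{\perp}=1-p$, and $1-p$ is the projection onto $(pH)^{\perp}$ by elementary Hilbert space theory. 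The real content — and the main obstacle — is the membership in $A$ of the other two projections, which is precisely where the R$^*$-structure, through Proposition \ref{operation}, becomes indispensable.

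For the meet, I would first record the identity $pH\cap qH=\ker\bigl((1-p)+(1-q)\bigr)$. Since $1-p$ and $1-q$ are positive, $\langle((1-p)+(1-q))h,h\rangle=\lVert(1-p)h\rVert^2+\lVert(1-q)h\rVert^2$, so $h$ is killed by $(1-p)+(1-q)$ exactly when $(1-p)h=(1-q)h=0$, i.e.\ when $h\in pH\cap qH$. Now $c:=(1-p)+(1-q)=2-p-q$ is a positive self-adjoint element of $A$, and the projection onto $\ker c$ is $1-s(c)$. By Proposition \ref{operation}(3) the support $s(c)=l(c)$ lies in $A$, hence so does $e:=1-s(c)$, and $eH=\ker c=pH\cap qH$. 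Since $eH\subseteq pH$ and $eH\subseteq qH$ give $e\le p,q$, while any $r\in\P(A)$ with $r\le p,q$ satisfies $rH\subseteq pH\cap qH=eH$, that is $r\le e$, we conclude $e=p\wedge q$.

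For the join the point is to see that $pH+qH$ is already closed. Writing an arbitrary vector as $ph_1+qh_2=p(h_1+qh_2)+(1-p)qh_2$ shows $pH+qH=pH+(1-p)qH$, and this is an \emph{orthogonal} sum because $(1-p)qH\subseteq(1-p)H=(pH)^{\perp}$. Here the operator $(1-p)q$ belongs to $A$, so by the R$^*$-property (Theorem \ref{equi}(3)) its range $(1-p)qH$ is closed; an orthogonal sum of two closed subspaces is closed, whence $pH+qH$ is closed. Moreover the projection onto $(1-p)qH$ is $l((1-p)q)\in A$ by Proposition \ref{operation}(3), and it is orthogonal to $p$, so $f:=p+l((1-p)q)$ is a projection in $A$ with range $pH+(1-p)qH=pH+qH$. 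Exactly as for the meet, $f\ge p,q$ and $f$ is dominated by every projection of $\P(A)$ lying above both $p$ and $q$, so $f=p\vee q$.

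The only delicate steps are the two membership claims, both of which rest on the content of Proposition \ref{operation}: in an R$^*$-algebra one may form support and range projections and apply functional calculus without leaving $A$. I expect the subtlest point to be the closedness of $pH+qH$, since for arbitrary projections in $B(H)$ the sum of two closed ranges need not be closed; it is exactly the R$^*$-hypothesis, forcing $(1-p)q$ to have closed range, that rules out this pathology and makes the clean formula $p\vee q=$ ``projection onto $pH+qH$'' (rather than onto its closure) correct.
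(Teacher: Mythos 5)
Your proof is correct, and its logical skeleton coincides with the paper's: reduce all three bullets to the single question of whether the ambient orthogonal projections onto $pH\cap qH$ and $pH+qH$ lie in $A$, after which extremality is immediate from the range-inclusion order on $\P(A)$. But both membership arguments are carried out with genuinely different operators. For the join, the paper applies Theorem \ref{equi} (3) to the single positive element $p+q$: from $0\le p\le p+q$ it gets $p=s(p)\le s(p+q)$, hence $pH\subset (p+q)H$, and likewise for $q$, so that $pH+qH=(p+q)H$ is closed in one stroke and $p\vee q=s(p+q)\in A$; you instead apply the closed-range property to $(1-p)q$ and exploit the orthogonal decomposition $pH+qH=pH\oplus (1-p)qH$, arriving at the formula $p\vee q=p+l\bigl((1-p)q\bigr)$. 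Both are sound; the paper's version is marginally slicker (no case analysis about orthogonality, one support projection), while yours makes the closedness mechanism more transparent — you correctly isolate that the R$^*$-hypothesis is exactly what prevents the non-closed-sum pathology. For the meet, the paper invokes Borel functional calculus via Proposition \ref{operation} (2), identifying the projection onto $pH\cap qH$ as $\chi_{\{1\}}(pqp)$; you instead observe $pH\cap qH=\ker\bigl(2-p-q\bigr)$ (valid, by the positivity computation $\langle ch,h\rangle=\lVert(1-p)h\rVert^2+\lVert(1-q)h\rVert^2$, using that $1-p$, $1-q$ are projections) and take $1-s(2-p-q)$, which needs only support projections from Proposition \ref{operation} (3) plus unitality — a slightly more elementary route, though it leans on $\operatorname{id}_H\in A$, which the hypothesis grants. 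Your treatment of the third bullet matches the paper's (which dismisses it as ``easy''). No gaps.
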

\begin{proof}
Let us prove the first item. 
The projection $p\vee q$ is determined as the smallest projection in $A$ with $p\leq p\vee q$, $q\leq p\vee q$. 
Therefore, it is clear that the range of $p\vee q$ contains $pH+qH$. 
It suffices to show that $pH+qH$ is closed and the projection onto $pH+qH$ belongs to $A$. 
Since $A$ is an R$^*$-algebra, the positive operator $p+q$ has closed range by Theorem \ref{equi} (3). 
Since $0\leq p\leq p+q$, we have $p=s(p)\leq s(p+q)$, which implies $pH\subset (p+q)H$. 
Similarly, we have $qH\subset (p+q)H$. 
It is clear that $(p+q)H\subset pH+qH$. 
Thus we obtain $pH+qH=(p+q)H$. 
Recall that $(p+q)H$ is closed and the projection onto $(p+q)H$ is equal to $s(p+q)\in A$. 
This completes the proof of the first item.

To prove the second item, check that the projection onto $pH\cap qH$ is equal to $\chi_{\{1\}}(pqp)$, which belongs to $A$ by Proposition \ref{operation} (2).
The third item is easy.
\end{proof}
This property is very similar to a basic property of the lattice of projections of a von Neumann algebra. 
However, there is a big difference: For projections $p, q$ of a von Neumann algebra in $B(H)$, $p\vee q$ is the projection onto the \emph{closure} of $pH+qH$, and taking closure is inevitable.

In that respect{,} the lattice $\P(A)$ resembles that of operator ranges, namely, the lattice formed of ranges of bounded linear operators on a fixed Hilbert space. 
It is a curious fact that the sum and intersection of any pair of operator ranges are operator ranges. 
However, the lattice of operator ranges is never complemented unless $H$ is finite-dimensional. 
In fact, an operator range has a complement if and only if it is closed. 
See \cite{FW} for the proofs of these facts and a more detailed study of operator ranges.

\begin{remark}
In the above argument{,} we discussed only the case of unital R$^*$-algebras, just for simplicity. 
All of these results carry over to the nonunital case after a suitable change. 
For example, for a nonunital R$^*$-algebra $A${,} the collection of projections forms a modular lattice that is not complemented. 
(To show this, consider the unitization of $A$.)
\end{remark}

Although the following theorem is easy to prove, it gives an enormous motivation to the research on R$^*$-algebras and their lattices of projections.
For a locally compact Hausdorff space $K$, let $C_0(K)$ denote the commutative C$^*$-algebra of continuous complex-valued functions on $K$ that vanish at infinity.

\begin{theorem}\label{boole}
If $A$ is a commutative R$^*$-algebra then the lattice $\P(A)$ forms a generalized Boolean algebra. 
Every generalized Boolean algebra arises in this way. 
Two commutative R$^*$-algebras are $^*$-isomorphic if and only if their projection lattices are isomorphic.
The commutative R$^*$-algebra $A$ is unital if and only if  $\P(A)$ is a Boolean algebra.
\end{theorem}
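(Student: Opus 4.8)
The plan is to treat the four assertions in turn, all resting on two structural facts about a commutative R$^*$-algebra $A$. First, since $A$ is commutative, any two projections $p,q\in\P(A)$ commute, so $pq$ and $p+q-pq$ are again projections in $A$; these are precisely $p\wedge q$ and $p\vee q$ by the explicit description of the lattice operations established earlier. Second, $A=\operatorname{span}\P(A)$: writing $x=a+ib$ with $a,b$ self-adjoint in $A$, each of $a,b$ has finite spectrum, so by the Borel functional calculus of Proposition \ref{operation}(2) its spectral projections corresponding to nonzero spectral values lie in $A$ (each such $\chi_{\{\lambda\}}(a)$ vanishes at $0$) and express it as a finite real-linear combination of projections. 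From the first fact, distributivity follows by the direct computation $p\wedge(q\vee r)=p(q+r-qr)=(p\wedge q)\vee(p\wedge r)$, the zero operator is the least element, and for $p\le q$ the element $q-p$ is a projection serving as a relative complement of $p$ in $[0,q]$ (one checks $p\wedge(q-p)=0$ and $p\vee(q-p)=q$). Hence $\P(A)$ is a distributive, sectionally complemented lattice with least element, which is exactly a generalized Boolean algebra; this gives the first assertion.

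For the converse (every generalized Boolean algebra arises this way), I would represent the given algebra as a generalized field of sets $\S\subset 2^{X}$ and let each $S\in\S$ act on $\ell_2(X)$ as the multiplication operator $p_S$ by $\chi_S$. Since $\S$ is closed under $\cap,\cup,\setminus$, the span $A$ of $\{p_S:S\in\S\}$ is a commutative $^*$-subalgebra of $B(\ell_2(X))$, and every element of $A$ is multiplication by a simple function, hence has finite spectrum; so $A$ is an R$^*$-algebra by Theorem \ref{equi}(6). It remains to identify $\P(A)$ with $\S$: a projection in $A$ is multiplication by a $\{0,1\}$-valued simple function $\chi_S$, and a short argument shows that $S$ lies in the Boolean subalgebra of $\S$ generated by the finitely many defining sets (using that relative complements inside their union stay in $\S$), so $S\in\S$. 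Thus $S\mapsto p_S$ is a lattice isomorphism $\S\to\P(A)$.

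That the correspondence is faithful is the substantive point. One direction is immediate: a $^*$-isomorphism carries projections to projections and preserves the order, hence induces a lattice isomorphism of projection lattices. For the other direction I would use $A=\operatorname{span}\P(A)$ to reconstruct a $^*$-isomorphism from a given lattice isomorphism $\psi\colon\P(A_1)\to\P(A_2)$ via the formula $\phi(\sum_i c_i p_i)=\sum_i c_i\psi(p_i)$. The main obstacle is well-definedness, since the representation of an element as a linear combination of projections is far from unique. I expect to resolve this by passing to a common refinement: finitely many projections $p_1,\dots,p_n$ generate a finite Boolean algebra whose atoms $e_1,\dots,e_m$ are pairwise orthogonal, with $p_i=\sum_{j\in J_i}e_j$; because $\psi$ preserves $\wedge$ and the least element (hence orthogonality) and these joins, the vanishing of $\sum_i c_i p_i$ is equivalent to the vanishing of each coefficient $\sum_{i:j\in J_i}c_i$, a condition unchanged upon applying $\psi$. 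Well-definedness granted, linearity, the multiplicativity identity $\phi(pq)=\phi(p\wedge q)=\psi(p)\wedge\psi(q)=\phi(p)\phi(q)$ on projections (extended bilinearly), $^*$-preservation, and bijectivity (via $\psi^{-1}$) are routine, yielding the $^*$-isomorphism.

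Finally, for the unital case I would argue that $\P(A)$ is a Boolean algebra precisely when it has a greatest element: a generalized field of sets with a top is closed under complementation relative to that top, hence is a field of sets. If $A$ has unit $e$, then $e$ is a projection dominating every projection (as $ep=p$), so $e=\max\P(A)$; conversely a greatest projection $e$ satisfies $ep=p$ for all $p\in\P(A)$, and since $A=\operatorname{span}\P(A)$ with $A$ commutative, $e$ acts as a two-sided unit $ex=xe=x$. Hence $A$ is unital if and only if $\P(A)$ has a greatest element, that is, if and only if $\P(A)$ is a Boolean algebra.
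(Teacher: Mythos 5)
Your proposal is correct, but it takes a genuinely different route from the paper's at its two substantive points, so a comparison is in order. For the first assertion the paper passes to the completion $\widehat{A}$, represents it as $C_0(K)$ via Gelfand--Naimark, and exhibits $\P(A)$ concretely as the generalized field of sets of compact open subsets of $K$; you instead verify abstract lattice axioms directly (distributivity via $p\wedge q=pq$ and $p\vee q=p+q-pq$, sectional complements $q-p$, least element $0$). One caveat: the paper \emph{defines} a generalized Boolean algebra as a lattice isomorphic to a generalized field of sets, so your step ``distributive, sectionally complemented with least element, which is exactly a generalized Boolean algebra'' silently invokes the Stone--Birkhoff representation theorem (every relatively complemented distributive lattice with least element is isomorphic to a generalized field of sets). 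That is a classical fact, but with the paper's definition it is not tautological and should be cited or proved; the paper's Gelfand route produces the field of sets outright and so avoids this appeal. For the second assertion your construction is the paper's $R(\S)$ in mild disguise: the span of the multiplication operators $p_S$ on $\ell_2(X)$ equals the algebra of $\S$-measurable simple functions, and your atom-refinement argument that every projection in the span has its level set in $\S$ correctly supplies what the paper only sketches. For the third assertion you actually gain something: where the paper dismisses well-definedness of $\phi\bigl(\sum_i c_ip_i\bigr)=\sum_i c_i\psi(p_i)$ as ``a routine exercise'', you carry it out by refining $p_1,\dots,p_n$ into the pairwise orthogonal atoms of the finite generalized Boolean subalgebra they generate, and noting that a lattice isomorphism preserves orthogonality (since $ef=0\iff e\wedge f=0$ for commuting projections), relative complements (unique by distributivity), nonzeroness, and joins of orthogonal families, so that $\sum_i c_ip_i=0$ holds iff the aggregated coefficients on atoms vanish, a condition invariant under $\psi$. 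This is exactly the right argument and makes the sketchy part of the paper's proof rigorous. Your treatment of the unital case (greatest element $\iff$ Boolean, and a greatest projection acts as a unit because $A=\operatorname{span}\P(A)$) likewise correctly fleshes out what the paper declares clear.
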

\begin{proof}
It is a basic fact that the lattice of principal right ideals of a commutative regular ring forms a generalized Boolean algebra. See for example \cite[Theorem II.2.10]{N}, in which unital regular rings are considered.
However, let us give a proof of this fact for the particular case of R$^*$-algebras. 
The proof below visualizes the relation among commutative R$^*$-algebras, Boolean algebras and commutative C$^*$-algebras.

Let $A$ be a commutative R$^*$-algebra. 
Take the completion $\widehat{A}$ of $A$, which is a commutative C$^*$-algebra. 
By the Gelfand--Naimark theorem there exists a locally compact Hausdorff space $K$ such that $\widehat{A}\cong C_0(K)$. 
Then each projection $p\in \P(A)$ can be identified with a projection $f_p$ in $C_0(K)$, which is a continuous function with value in $\{0, 1\}$, and $f_p$ is in turn identified with a compact open subset $f_p^{-1}(1)\subset K$. 
Set $\S:= \{f_p^{-1}(1)\mid p\in \P(A)\}\subset 2^{K}$. 
Then it follows from a quite straightforward argument that the map $p\mapsto f_p^{-1}(1)$ is a lattice isomorphism from $\P(A)$ onto $\S$, which forms a generalized field of sets.

Let $\S\subset 2^X$ be a generalized field of sets. 
Consider the Hilbert space $\ell_2(X)$, which is the space of square-summable complex-valued functions on $X$. 
Then the space of functions $R(\S) := \{f\colon X\to \C\mid \# f(X)<\infty,\,\, f^{-1}(z)\in \S \text{ for every }z\in \C\setminus\{0\}\}$ acts on $\ell_2(X)$ by pointwise multiplication. 
It is clear that $R(\S)$ is a commutative R$^*$-algebra. 
Moreover, for each $f\in \P(R(\S))$, we have $f^{-1}(1)\in \S$, and this gives an identification $\P(R(\S)) = \S$ as generalized Boolean algebras.

Let $A, B$ be commutative R$^*$-algebras. 
It is obvious that if $A$ is $^*$-isomorphic to $B$ then $\P(A)$ is lattice isomorphic to $\P(B)$.
Assume that there is a lattice isomorphism $\psi\colon \P(A)\to \P(B)$. 
We concretely construct a $^*$-isomorphism from $A$ onto $B$. 
Let $x\in A$.
By Theorem \ref{equi},  $x$ has finite spectrum. 
Since $A$ is commutative, $x$ is a normal operator. 
Thus there exist $n\geq 1$, complex numbers $z_1, z_2, \ldots z_n\in \C$, and projections $p_1, p_2, \ldots, p_n\in \P(A)$ such that $x = \sum_{k=1}^n z_kp_k$. 
We define $\phi(x) := \sum_{k=1}^n z_k\psi(p_k)$. 
It is a routine exercise to show that $\phi$ is well-defined (independent of the choice of $z_k$, $p_k$) and is a $^*$-isomorphism from $A$ onto $B$. 

It is clear that  $A$ is unital if and only if $\P(A)$ is a Boolean algebra.
\end{proof}
For a generalized Boolean algebra $\S${,} let $R(\S)$ denote the commutative R$^*$-algebra whose projection lattice is isomorphic to $\S$. 
By the preceding theorem, such an R$^*$-algebra exists uniquely up to $^*$-isomorphism.

The above theorem is highly reminiscent of a similar characterization of commutative C$^*$-/von Neumann algebras. 
The Gelfand--Naimark theorem tells us that the research of  C$^*$-algebras is considered as ``noncommutative locally compact Hausdorff spaces''. 
Similarly, von Neumann algebras can be considered as ``noncommutative measure spaces''. 
The author believes that the theory of R$^*$-algebras can be considered as a theory of ``noncommutative (generalized) Boolean algebras''.

It is known as \emph{Stone's duality theorem} that the category of Boolean algebras is dually equivalent to that of Boolean spaces. 
Recall that a \emph{Boolean space} (also known as a \emph{Stone space}) is a zero-dimensional compact Hausdorff space. 
The lattice of clopen subsets of a Boolean space forms a Boolean algebra, which gives one direction of the correspondence between Boolean spaces and Boolean algebras.

More generally, the category of generalized Boolean algebras is dually equivalent to that of zero-dimensional locally compact Hausdorff spaces. 
A zero-dimensional locally compact Hausdorff space is also called a locally compact Boolean space, and can be characterized as a locally compact Hausdorff space such that every connected component is a singleton.
Let us revisit this correspondence from the viewpoint of R$^*$- and C$^*$-algebras. 
Let $\S$ be a generalized Boolean algebra. 
Then the lattice of projections of the commutative R$^*$-algebra $R(\S)$ is $\S$.
We consider the completion $\widehat{R(\S)}$ of $R(\S)$, which is a commutative C$^*$-algebra. 
\begin{proposition}
Let $\S$ be a generalized Boolean algebra. 
Then every projection in $\widehat{R(\S)}$ is an element of $\P(R(\S))$, which can be identified with $\S$. 
In other words, if one takes a locally compact Hausdorff space $K$ with $\widehat{R(\S)}\cong C_0(K)$ (whose existence is guaranteed by the Gelfand--Naimark theorem), then 
\begin{itemize}
\item $K$ is zero-dimensional, and 
\item the generalized Boolean algebra formed of compact open subsets of $K$ coincides with $\S$. 
\end{itemize}
\end{proposition}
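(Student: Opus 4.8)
The plan is to prove the central claim first, namely that passing to the completion introduces no new projections, and then to read off both the zero-dimensionality of $K$ and the identification of compact open sets with $\S$ as consequences. So I would first fix a projection $p\in\widehat{R(\S)}$ and, using density of $R(\S)$ in its completion, choose $y\in R(\S)$ with $\lVert y-p\rVert<1/3$; replacing $y$ by its self-adjoint part $(y+y^*)/2\in R(\S)$, I may assume $y$ is self-adjoint with $\lVert y-p\rVert<1/3$. Since $R(\S)$ is an R$^*$-algebra, $y$ has finite spectrum by Theorem \ref{equi}, and the norm estimate forces every point of $\sigma(y)$ to lie within $1/3$ of $\sigma(p)\subset\{0,1\}$, so $\sigma(y)\subset(-1/3,1/3)\cup(2/3,4/3)$ and in particular $1/2\notin\sigma(y)$. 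The function $g:=\chi_{(1/2,\infty)}$ is therefore well defined on $\sigma(y)$ with $g(0)=0$, and Proposition \ref{operation} (2) produces a projection $q:=g(y)\in R(\S)$.

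The key step is to check the \emph{exact} equality $q=p$, and here I would pass to the commutative picture $\widehat{R(\S)}\cong C_0(K)$. Then $y$ is a real-valued function, $p=\chi_U$ for the compact open set $U=p^{-1}(1)$, and $q=\chi_{\{y>1/2\}}$; the pointwise bound $\lvert y(t)-p(t)\rvert\le\lVert y-p\rVert<1/3$ gives $y>2/3$ on $U$ and $\lvert y\rvert<1/3$ off $U$, whence $\{y>1/2\}=U$ and $q=p$. Thus $p=q\in\P(R(\S))$, which by Theorem \ref{boole} is identified with $\S$. I expect this equality $q=p$ to be the main obstacle: in a noncommutative C$^*$-algebra nearby projections are merely unitarily equivalent rather than equal, and it is precisely commutativity together with the finiteness of $\sigma(y)$ (the R$^*$-algebra property) that upgrades ``close'' to ``equal''.

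For zero-dimensionality I would show that the compact open sets form a basis for the topology of $K$. Given $x\in K$ and an open neighbourhood $V$, local compactness and Urysohn's lemma provide $f\in C_0(K)$ with $f(x)=1$ and $\operatorname{supp}f\subset V$; approximating $f$ by a self-adjoint $g\in R(\S)$ with $\lVert f-g\rVert<1/3$ and setting $W:=\{g>1/2\}$, the finiteness of the range of $g$ shows that $W$ is a finite union of level sets $g^{-1}(z)$ with $z\neq0$, each lying in $\S$, so $W$ is compact open; moreover $g(x)>2/3$ and $\lvert g\rvert<1/3$ off $V$ give $x\in W\subset V$. Finally, since the projections of $C_0(K)$ are exactly the characteristic functions of compact open subsets of $K$, the first part identifies the compact open sets of $K$ with $\P(R(\S))=\S$, completing the proof.
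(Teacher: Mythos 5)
Your proof is correct and follows essentially the same route as the paper: approximate the projection $p$ by an element of $R(\S)$, use the characteristic-function functional calculus of Proposition \ref{operation} (2) to produce a projection back in $\P(R(\S))$, and exploit commutativity (and the $\{0,1\}$-valued function picture on $K$) to upgrade ``close'' to ``equal'' --- the paper does this with a sequence $x_n\to p$ and the observation that $\chi_{D}(x_n)=p$ for large $n$, while you use a single quantitative $1/3$-approximant. Your Urysohn-based argument that compact open sets form a basis simply fills in the details behind the paper's closing remark that uniform approximability by $R(\S)$ forces every connected component of $K$ to be a singleton.
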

\begin{proof}
Let $p\in \P(\widehat{R(\S)})$. 
Then there exists a sequence $x_n\in R(\S)$ such that $x_n\to p$ (in norm). 
Regard $p$ and $x_n$ as functions in $C_0(K)$. 
Then $p$ is a function with $p(K)\subset \{0, 1\}$, and $x_n\to p$ means that $x_n$ converges to $p$ uniformly. 
Take an open disc $D$ with center $1$ and radius $1/2$ in $\C$. 
It follows that $\chi_{D}(x_n)\to p$. 
Remark that $R(\S)$ is a commutative R$^*$-algebra, so  Proposition \ref{operation} (2) implies that $\chi_{D}(x_n)\in \P(R(\S))$. 
Moreover, $\chi_{D}(x_n)$ can be regarded as a function in $C_0(K)$ with $\chi_{D}(x_n)(K)\subset \{0, 1\}$. 
It follows by $\chi_{D}(x_n)\to p$ that $\chi_{D}(x_n)=p$ for a sufficiently large $n$, which implies $p\in \P(R(\S))$. 
Therefore, the generalized Boolean algebra formed of compact open subsets of $K$ coincides with $\P(R(\S))=\S$. 
Since every function in $C_0(K)$ can be uniformly approximated by elements in $R(\S)$, it is easy to see that each connected component of the locally compact Hausdorff space $K$ is a singleton, which means that $K$ is zero-dimensional.
\end{proof}

Using this, we may interpret Stone's theorem as the equivalence of the following three categories: 
\begin{itemize}
\item the category of generalized Boolean algebras with lattice homomorphisms that preserve $0$, 
\item the category of commutative R$^*$-algebras with $^*$-homomorphisms, 
\item the category of all $C_0(K)$ for zero-dimensional locally compact Hausdorff spaces $K$, with $^*$-homomorphisms.
\end{itemize}
We may give a proof of this equivalence {straightforwardly}. 
The details are omitted, but let us just observe how the morphisms correspond. 
Let $A$, $B$ be commutative R$^*$-algebras. 
Let  $\psi\colon \P(A)\to \P(B)$ be a lattice homomorphism such that $\psi(0)=0$.
The corresponding $^*$-homomorphism $\phi\colon A\to B$ can be described just as in Proof of Theorem \ref{boole}:
Each element $x$ of $A$ can be written as $x=\sum_{k=1}^n z_kp_k$ for some $n\geq 1$, complex numbers $z_1, z_2, \ldots, z_n\in \C$, and $p_1, p_2, \ldots, p_n\in \P(A)$. 
Define $\phi\colon A\to B$ by $\phi(x):= \sum_{k=1}^n z_k\psi(p_k)$, and check that it is a well-defined $^*$-homomorphism.
Secondly, given a $^*$-homomorphism $\phi\colon A\to B$ (which is contractive), we may take its continuous extension $\varphi\colon \widehat{A}\to \widehat{B}$ to completions, which is clearly a $^*$-homomorphism. 
Finally, a $^*$-homomorphism $\varphi\colon \widehat{A}\to \widehat{B}$ maps  $\P(\widehat{A})=\P(A)$ into $\P(\widehat{B})=\P(B)$, thus restricts to a lattice homomorphism that sends $0$ to $0$.

Is it possible to generalize this equivalence to the setting of noncommutative R$^*$-algebras?
Apparently{,} there is no that beautiful correspondence applicable to general R$^*$-algebras.
Indeed,  there exist distinct R$^*$-algebras $A, B$ with $\widehat{A}\cong \widehat{B}$ (see Example \ref{compact} and Section \ref{atom}), so we cannot recover full information about $A$ from $\widehat{A}$.
Moreover, in Proposition \ref{opposite} we prove that there exists a pair of noncommutative R$^*$-algebras which are not $^*$-isomorphic but the lattices of projections are orthoisomorphic. 
Thus the structure of an R$^*$-algebra cannot be fully recovered from the lattice of projections.
Nonetheless, there do exist several nice correspondence results that apply to certain classes of  R$^*$-algebras and their lattices of projections. 
See Sections \ref{countable} and \ref{nd}.

\section{Purely atomic R$^*$-algebras and inner product spaces}\label{atom}
Recall that a von Neumann algebra can be classified into several types by means of projections. 
In particular, a von Neumann algebra is said to be atomic if the least upper bound of minimal projections is the unit.
It is a well-known fact, which follows from e.g.\ the type decomposition theorem of von Neumann algebras, that an atomic von Neumann algebra is an $\ell_\infty$-direct sum of type I factors. 

Let us define a class of R$^*$-algebras that is similar to the class of atomic von Neumann algebras. 
Since it is hard to consider the greatest lower bound of an infinite set of projections in an R$^*$-algebra, we want a definition that involves only finitely many projections.
An \emph{atom} of an R$^*$-algebra $A$ is a minimal projection in $\P(A)\setminus \{0\}$. 
Note that $p\in \P(A)$ is an atom if and only if the R$^*$-algebra $pAp$ is one-dimensional. 
We say an R$^*$-algebra $A$ is \emph{purely atomic} if every projection of $A$ is a sum of finitely many atoms. 
We give a complete description of purely atomic R$^*$-algebras.

\begin{lemma}
Let $A$ be an R$^*$-algebra and $p, q\in \P(A)$ atoms. 
Then the linear space $pAq$ is at most one-dimensional. 
\end{lemma}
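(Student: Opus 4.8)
The plan is to fix a nonzero element $x\in pAq$ (if $pAq=\{0\}$ there is nothing to prove) and to show that every $y\in pAq$ is a scalar multiple of $x$; this is precisely the assertion $\dim pAq\leq 1$. The whole argument rests on translating the atom hypotheses into the algebraic identities $pAp=\C p$ and $qAq=\C q$. These hold because, by the remark preceding the lemma, a projection is an atom exactly when the corresponding reduced R$^*$-algebra is one-dimensional, and such a one-dimensional R$^*$-algebra is necessarily the linear span of its own (nonzero) unit projection.

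First I would record the three products that elements of $pAq$ produce. For $x\in pAq$ we have $x=pxq$, hence $x^*=qx^*p$; consequently $xx^*\in pAp=\C p$ and $x^*x\in qAq=\C q$, and for a second element $y\in pAq$ the mixed product $x^*y$ again lies in $qAq=\C q$. Write $xx^*=\alpha p$ and $x^*x=\beta q$. Here the $^*$-regularity of $A$ (Lemma \ref{starregular}) enters: since $x\neq 0$ we have $x^*x\neq 0$ and $xx^*\neq 0$, and as both operators are positive the scalars $\alpha,\beta$ are strictly positive. In particular $p=\alpha^{-1}xx^*$, which is the key identity, letting me recover the left support of $x$ from $x$ alone.

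Next, given an arbitrary $y\in pAq$, I would write $x^*y=cq$ for some $c\in\C$ and set $\lambda:=c/\beta$. A direct computation gives $x^*(y-\lambda x)=cq-\lambda\beta q=0$. The point is then to pass from $x^*(y-\lambda x)=0$ to $y-\lambda x=0$, and this is where the identity $p=\alpha^{-1}xx^*$ does the work: since $y-\lambda x\in pAq$ satisfies $p(y-\lambda x)=y-\lambda x$, we obtain
\[
y-\lambda x=p(y-\lambda x)=\alpha^{-1}xx^*(y-\lambda x)=\alpha^{-1}x\bigl(x^*(y-\lambda x)\bigr)=0,
\]
so $y=\lambda x$, as desired.

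I expect the only subtle step to be exactly this last implication: the equation $x^*(y-\lambda x)=0$ does not by itself force $y-\lambda x=0$ in a general ring, and it is the combination of the atom conditions (which make $xx^*$ a nonzero scalar multiple of $p$) with $^*$-regularity (which guarantees that scalar is nonzero) that closes the gap. An alternative, essentially equivalent route would invoke the polar decomposition $x=v\lvert x\rvert$ from Proposition \ref{operation} (3): one checks $\lvert x\rvert=\beta^{1/2}q$ and that $v$ is a partial isometry with $v^*v=q$ and $vv^*=p$, so that every nonzero element of $pAq$ is a scalar multiple of this single $v$. The direct computation above, however, avoids functional calculus entirely and seems cleanest.
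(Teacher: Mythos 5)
Your proof is correct and follows essentially the same route as the paper: both arguments push products of elements of $pAq$ with adjoints into the one-dimensional corners $pAp=\C p$ and $qAq=\C q$ and then cancel, using that $xx^*$ and $x^*x$ cannot vanish for $x\neq 0$. The only difference is cosmetic: the paper normalizes $x,y$ to norm one so that $xx^*=p$, $x^*x=q$ exactly (making $x,y$ partial isometries and $xy^*=\lambda p$ with $\lvert\lambda\rvert=1$), whereas you keep the scalars $\alpha,\beta$ and cancel algebraically via $p=\alpha^{-1}xx^*$, which avoids the norm argument entirely.
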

\begin{proof}
Let $x, y\in pAq$ be elements with norm one. 
Then $xx^*\in pAp$ is positive and $\lVert xx^*\rVert=\lVert x\rVert^2=1$. 
Since $pAp$ is one-dimensional, we have $xx^*=p$. 
Similarly, we obtain $yy^*=p$ and $x^*x=y^*y=q$. 
It follows that $x, y$ are partial isometries {that} share initial and final spaces. 
Since $xy^*\in pAp$, there is a complex number $\lambda\in \C$ with $\lvert \lambda\rvert=1$ such that $xy^*=\lambda p$. 
It follows that $x=\lambda y$, proving that $x$ and $y$ are linearly dependent.
\end{proof}

\begin{lemma}
Let $A$ be an R$^*$-algebra. 
Then $A$ is purely atomic if and only if $pAp$ is finite-dimensional for every $p\in \P(A)$.
\end{lemma}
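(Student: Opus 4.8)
The plan is to prove both implications separately; each is essentially a consequence of the preceding lemma (that $pAq$ is at most one-dimensional for atoms $p,q$) on the one hand, and of the structure theory of finite-dimensional C$^*$-algebras on the other.

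For the direction assuming $A$ is purely atomic, I would fix $p\in\P(A)$ and write $p=\sum_{i=1}^n e_i$ as a sum of atoms. By linearity, every element of $pAp$ has the form $pxp=\sum_{i,j}e_i x e_j$, so that $pAp\subseteq\sum_{i,j=1}^n e_iAe_j$. The preceding lemma gives that each $e_iAe_j$ is at most one-dimensional (the $e_i$ being atoms), and since each $e_i\leq p$ (because $0\leq e_i\leq\sum_j e_j=p$) we also have $e_iAe_j\subseteq pAp$, so in fact $pAp=\sum_{i,j}e_iAe_j$. Hence $\dim pAp\leq n^2<\infty$. I note that this argument does not even require the atoms $e_i$ to be mutually orthogonal.

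For the converse, assuming $pAp$ is finite-dimensional for every $p$, I would fix an arbitrary projection $p\in\P(A)$. By the proposition on reduced R$^*$-algebras, $pAp$ is an R$^*$-algebra with unit $p$, and being finite-dimensional it is a finite-dimensional C$^*$-algebra, hence $^*$-isomorphic to a direct sum $\bigoplus_k\M_{n_k}$. I would then decompose its unit as a finite sum $p=\sum_{i=1}^m e_i$ of mutually orthogonal minimal projections of $pAp$. The key point is that each such $e_i$ is genuinely an atom of $A$: since $e_i\leq p$ we have $e_iAe_i=e_i(pAp)e_i$, and minimality of $e_i$ inside the finite-dimensional C$^*$-algebra $pAp$ forces $e_i(pAp)e_i=\C e_i$, which is one-dimensional. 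Thus $p$ is a finite sum of atoms of $A$, and since $p$ was arbitrary, $A$ is purely atomic.

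The only nonformal input—and thus the ``hard part,'' though it is entirely standard—is the reverse direction's reliance on the structure theorem for finite-dimensional C$^*$-algebras: that $pAp$ splits into matrix blocks and that its unit is a finite sum of minimal projections. This is combined with the identity $e_iAe_i=e_i(pAp)e_i$, which is what transfers minimality of $e_i$ inside $pAp$ into atomicity of $e_i$ inside the ambient algebra $A$.
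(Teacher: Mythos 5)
Your proof is correct and takes essentially the same approach as the paper: the forward direction is exactly the paper's argument (decompose $p$ into atoms $e_1,\ldots,e_n$ and use the preceding lemma that each $e_iAe_j$ is at most one-dimensional, so $\dim pAp\leq n^2$), while the converse --- which the paper dismisses with ``it is clear'' --- you fill in correctly via the structure theorem for finite-dimensional C$^*$-algebras together with the compression identity $e_iAe_i=e_i(pAp)e_i$. Your appeal to the full matrix-block decomposition is slightly more machinery than strictly necessary (one can extract mutually orthogonal minimal projections of $pAp$ summing to $p$ by induction on $\dim pAp$), but that is a matter of taste, not a gap.
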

\begin{proof}
If $pAp$ is finite-dimensional for every $p\in \P(A)$, it is clear that $A$ is purely atomic. 
Suppose that $A$ is purely atomic and let $p\in\P(A)$. 
Take a collection of atoms $p_1, \ldots, p_n$ with $p=p_1+\cdots+p_n$. 
For each pair $k, l\in \{1, \ldots, n\}$, the preceding lemma implies that $p_kAp_l$ is at most one-dimensional. 
Therefore,  $pAp=\operatorname{span}\{p_kAp_l\mid k, l\in \{1, \ldots, n\}\}$ is finite-dimensional.
\end{proof}

It follows that a purely atomic R$^*$-algebra is ultramatricial. 
Indeed, for a purely atomic R$^*$-algebra $A$ and a collection $x_1, \ldots, x_n\in A$, define $p:=\bigvee_{1\leq i\leq n}{l(x_i)}\vee\bigvee_{1\leq i\leq n}{r(x_i)}$, then $pAp$ is finite-dimensional and $x_1, \ldots, x_n\in pAp$. 

Let $V$ be an inner product space. 
Let $H$ be the completion of $V$. 
Consider the collection $F(V)$ of operators $x$ in $B(H)$ with finite rank such that $r(x)H, l(x)H\subset V$ (namely, such that the ranges of $x$ and $x^*$ are included in $V$). 
It is easy to see that $F(V)$ is a purely atomic R$^*$-algebra. 

\begin{theorem}\label{vi}
Let $(V_i)_{i\in I}$ be a family of complex inner product spaces.
Then the R$^*$-algebra $\bigoplus_{i\in I} F(V_i)$ is purely atomic.
Every purely atomic R$^*$-algebra is $^*$-isomorphic to some R$^*$-algebra that can be obtained in this manner. 
\end{theorem}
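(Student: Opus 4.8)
The plan is to prove the two assertions separately. The first assertion---that $\bigoplus_{i\in I} F(V_i)$ is purely atomic---should follow quickly from the definitions together with the preceding two lemmas. Each $F(V_i)$ is purely atomic (as already observed in the excerpt), and the direct sum of purely atomic R$^*$-algebras inherits pure atomicity: a projection $p\in\P(\bigoplus_i F(V_i))$ decomposes as a (finite) sum $\sum_i p_i$ with $p_i\in\P(F(V_i))$, and since all but finitely many $p_i$ vanish, each nonzero $p_i$ is a finite sum of atoms, so $p$ is too. Equivalently, by the second lemma it suffices to check that $p\left(\bigoplus_i F(V_i)\right)p$ is finite-dimensional for each projection $p$, which is immediate from the block-diagonal structure.

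The substance is the second assertion: every purely atomic R$^*$-algebra $A$ decomposes as such a direct sum. First I would introduce an equivalence relation on the atoms of $A$, declaring two atoms $p,q$ \emph{equivalent} if $pAq\neq\{0\}$ (equivalently, if there is a partial isometry in $A$ with initial projection $q$ and final projection $p$). The first lemma above tells us $pAq$ is then one-dimensional, spanned by a partial isometry; one checks this relation is reflexive, symmetric (via the $^*$-operation) and transitive (by composing partial isometries, using that the product lands in some $pAr$ which is one-dimensional hence contains a partial isometry). Let $(C_i)_{i\in I}$ be the resulting equivalence classes of atoms. For each class $C_i$, fix a representative atom $e_i$ and, for every $p\in C_i$, a partial isometry $v_p\in A$ with $v_pv_p^*=p$, $v_p^*v_p=e_i$ (taking $v_{e_i}=e_i$). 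These matrix units let me build an inner product space: set $V_i$ to be the linear span of $\{v_p\mid p\in C_i\}$ regarded inside the Hilbert space on which $A$ acts, with the inherited inner product, and aim to show $A_i:=\overline{\operatorname{span}}^{\,\mathrm{alg}}\{v_pv_q^* \mid p,q\in C_i\}$ (the algebraic span, no closure) is $^*$-isomorphic to $F(V_i)$.

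The key structural step is to verify that these pieces $A_i$ are mutually orthogonal subalgebras whose direct sum exhausts $A$. Orthogonality ($A_iA_j=\{0\}$ for $i\neq j$ and $A_i^*=A_i$) follows because atoms in distinct classes $p\in C_i$, $q\in C_j$ satisfy $pAq=\{0\}$ by construction. To see the pieces span $A$, take $x\in A$ and the projection $f=\bigvee l(x)\vee r(x)$; as noted in the excerpt $fAf$ is finite-dimensional, so $f$ is a finite sum of atoms, and expanding $x=fxf=\sum_{p,q}pxq$ over these atoms expresses $x$ as a finite sum of elements each lying in a single $pAq$, hence in the appropriate $A_i$. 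The identification $A_i\cong F(V_i)$ comes from sending the rank-one operator $v_p v_q^*$ to the corresponding rank-one operator on $V_i$; the matrix-unit relations $v_pv_q^*\cdot v_rv_s^*=\delta_{qr}v_pv_s^*$ match exactly the multiplication of finite-rank operators, and the $^*$-structure is preserved.

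The main obstacle I anticipate is twofold. First, one must confirm that the equivalence relation is genuinely transitive: if $pAq$ and $qAr$ are each one-dimensional and nonzero, I need $pAr\neq\{0\}$, which requires checking that the product of the respective partial isometries is itself a \emph{nonzero} partial isometry in $pAr$---this uses that the initial/final projections match up, so the composite $v v'$ has $vv'(vv')^*=p\neq 0$. Second, and more delicately, I must ensure the inner product space $V_i$ recovered this way is the \emph{correct} one and that $A_i$ equals $F(V_i)$ \emph{without} any spurious completion: the whole point of the purely atomic hypothesis is that every element of $A_i$ has finite rank and ranges landing in $V_i$, so that $A_i$ sits inside $F(V_i)$, while conversely every finite-rank operator with ranges in $V_i$ is an algebraic combination of the matrix units and hence lies in $A_i$. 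Making this last containment precise---that no operator of $F(V_i)$ is missed---is where the argument must be handled carefully rather than waved through.
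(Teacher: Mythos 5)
Your route is genuinely different from the paper's: the paper passes to the $\sigma$-weak closure $M$ of $A$ in $B(H)$, shows $M$ is an atomic von Neumann algebra, decomposes it as the $\ell_\infty$-direct sum of factors $B(H_i)$, and only then extracts $V_i\subset H_i$ as the union of ranges of projections of $A$; you instead propose a purely algebraic matrix-unit construction. In outline this can be made to work, but as written it has a genuine gap at its foundation: the definition of $V_i$ does not make sense. The $v_p$ are operators, not vectors of the Hilbert space on which $A$ acts, so there is no ``inherited inner product''; and the deeper obstruction is that an atom of $A$ (a projection $p$ with $pAp$ one-dimensional) need not have rank one as an operator on $H$ --- for $A=\C\operatorname{id}_H$ with $H$ infinite-dimensional, the unique atom is $\operatorname{id}_H$. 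Hence $V_i$ cannot be read off from the given representation (and a Hilbert--Schmidt pairing $\operatorname{tr}(v_q^*v_p)$ may be infinite). This is exactly the difficulty the paper circumvents by re-representing $A$ inside $\bigoplus_i B(H_i)$, where atoms do become rank-one projections. The repair in your framework is GNS-like: set $V_i:=Ae_i$ and use $e_iAe_i=\C e_i$ to define the inner product by $e_iy^*x\,e_i=\langle xe_i, ye_i\rangle e_i$, letting $A$ act by left multiplication; note $Ae_i=\operatorname{span}\{v_p\mid p\in C_i\}$, so this is the intrinsic version of the space you were aiming for.

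A second concrete error: the relation $v_pv_q^*\cdot v_rv_s^*=\delta_{qr}\,v_pv_s^*$ is false, because distinct atoms in the same class need not be orthogonal (any two non-orthogonal rank-one projections in $\M_2$ lie in one class). The correct computation is $v_q^*v_r=\langle v_r,v_q\rangle e_i$, whence $v_pv_q^*v_rv_s^*=\langle v_r,v_q\rangle v_pv_s^*$, which matches composition of rank-one operators $v_p\langle\,\cdot\,,v_q\rangle$ only once the inner product above is in place. Relatedly, the family $\{v_pv_q^*\}$ is wildly linearly dependent (it is indexed by \emph{all} pairs of atoms in the class), so the prescription ``send $v_pv_q^*$ to the corresponding rank-one operator'' requires a well-definedness argument you do not supply; defining the map as left multiplication on $Ae_i$ gives well-definedness, multiplicativity, $^*$-preservation, and (with a short support argument) injectivity and surjectivity onto $F(V_i)$ for free. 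The remaining steps of your proposal are sound: transitivity of the relation holds as you argue, atoms in distinct classes are automatically orthogonal since $pq\in pAq=\{0\}$, and the spanning argument via $x=fxf$ with $f=l(x)\vee r(x)$ a finite sum of atoms correctly splits $A$ into the blocks. With the two repairs above, your argument yields a valid proof that, unlike the paper's, avoids von Neumann algebra theory altogether.
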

\begin{proof}
It is clear that the space  $\bigoplus_{i\in I} F(V_i)$ as in the statement of this theorem forms a purely atomic R$^*$-algebra. 
We prove that every purely atomic R$^*$-algebra is of this form.
The following proof is based on the theory of von Neumann algebras.

Let $A$ be a purely atomic R$^*$-algebra. 
Realize $A$ as a $^*$-subalgebra of some $B(H)$. 
Take the $\sigma$-weak closure (or equivalently, the  $\sigma$-strong closure) $M$ of $A$ in $B(H)$. 
Consider $\P(A)$ as an increasing net of projections in $M$. 
The fact that for every $x\in A$ the projection $p=l(x)\vee r(x)\in \P(A)$ satisfies $pxp=x$ implies that $\bigvee_{p\in \P(A)}p\in \P(M)$ is equal to the unit of $M$. 
Moreover, for each $p\in \P(A)$, the finite-dimensionality of $pAp$ implies that $pMp=pAp$. 
It follows that $M$ is an atomic von Neumann algebra. 
Thus we may find a collection $(H_i)_{i\in I}$ of Hilbert spaces such that $M$ is $^*$-isomorphic to the $\ell_{\infty}$-direct sum of $(B(H_i))_{i\in I}$. 
Since $pAp=pMp$ is finite-dimensional for each $p\in\P(A)$, each operator in $A$ can be thought of as an operator of finite rank in the $\ell_{\infty}$-direct sum of $(B(H_i))_{i\in I}$. 
Therefore, we may and do identify $A$ with a $^*$-subalgebra of the R$^*$-algebra $\bigoplus_{i\in I}F(H_i)$ ($\subset B(K)$, where $K:=\bigoplus_{i\in I} H_i$). 

Let us check that there exists a dense subspace $V_i\subset H_i$ for each $i\in I$ such that $A$ is identified with $\bigoplus_{i\in I}F(V_i)\,\,(\subset \bigoplus_{i\in I}F(H_i))$.
Fix $i\in I$. 
Define $V_i:= \{h\in H_i \mid h\in \operatorname{ran}p\text{ for some }p\in \P(A)\}$. 
Let $p_i$ be the projection from $K$ onto $H_i$, which lies in the center of $M$.
Observe that for each $p\in \P(A)$ we have $pp_i\in pMp=pAp$. 
Thus we see that $V_i$ is equal to $\{h\in H_i \mid h\in \operatorname{ran}p\text{ for some }p\in \P(A) \text{ with }p\leq p_i\}$.
By the fact that $\P(A)$ forms a lattice, we see that $V_i$ is actually a linear subspace of $H_i$.
Since $A$ is $\sigma$-strongly dense in $M$, we also see that $V_i$ is dense in $H_i$. 
For each $x\in A$, there exist $i_1, \ldots, i_n\in I$ such that $x=\sum_{k=1}^n p_{i_k}x$. 
Set $q_k=p_{i_k}(l(x)\vee r(x))$, $1\leq k\leq n$. 
Then we have $q_k\in \P(A)$ and $q_k\leq p_{i_k}$, thus $p_{i_k}x= q_kxq_k\in F(V_{i_k})$. 
This shows that $A\subset \bigoplus_{i\in I}F(V_i)$.
Let $i\in I$ and $p\in F(V_i)\,\,(\subset F(H_i))$ be a projection of rank one.  
The definition of $V_i$ implies that there is a projection $q\in \P(A)$ with $p\leq q$. 
Thus we have $p\in qMq$, but $q\in \P(A)$ implies that $p\in qMq=qAq\subset A$. 
Since the collection of rank-one projections (atoms) in $F(V_i)$ linearly spans $F(V_i)$, we obtain $F(V_i)\subset A$. 
This implies $\bigoplus_{i\in I}F(V_i)\subset A$, and the proof is complete.
\end{proof}

\begin{corollary}
A purely atomic simple R$^*$-algebra is $^*$-isomorphic to $F(V)$ for some inner product space $V$. 
\end{corollary}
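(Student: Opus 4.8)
The plan is to reduce the statement to the classification already established in Theorem \ref{vi} and then use simplicity to collapse the index set to a single element. By Theorem \ref{vi}, the given purely atomic R$^*$-algebra $A$ is $^*$-isomorphic to $\bigoplus_{i\in I}F(V_i)$ for some family $(V_i)_{i\in I}$ of complex inner product spaces. After discarding those indices $i$ with $V_i=\{0\}$ (for which $F(V_i)=0$ contributes nothing), I may assume that every $V_i$ is nonzero; and since $A$ is simple (in particular $A\neq 0$), the index set $I$ is nonempty.

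The key observation is that each individual summand is a two-sided ideal. Recalling that $\bigoplus_{i\in I}F(V_i)$ is, by construction, generated by the operators $p_i x p_i$, where the $p_i$ are the mutually orthogonal central projections onto the respective blocks, I would note that a general element of $A$ is a finite sum $\sum_i a_i$ with $a_i$ supported on the $i$-th block, and that left or right multiplication by an element of the $j$-th block $F(V_j)$ lands back in $F(V_j)$, precisely because the $p_i$ commute with every element of $A$ and are pairwise orthogonal. Hence, for each fixed $j\in I$, the embedded copy $F(V_j)$ (realized as $p_jAp_j$) is a two-sided ideal of $A$.

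Simplicity now finishes the argument: if $|I|\geq 2$, then for any fixed $j\in I$ the ideal $F(V_j)$ is nonzero (as $V_j\neq\{0\}$) and proper (as some other block is nonzero), contradicting the assumption that $A$ admits no nontrivial two-sided ideals. Therefore $I$ is a singleton and $A$ is $^*$-isomorphic to $F(V)$ with $V=V_i$ the unique inner product space. I do not anticipate any serious obstacle here; the only point requiring mild care is the bookkeeping in verifying that the block $F(V_j)$ is genuinely absorbing under multiplication by arbitrary elements of $A$, which rests entirely on the centrality and orthogonality of the projections $p_i$ built into the direct-sum construction.
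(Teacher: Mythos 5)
Your proof is correct and is exactly the argument the paper intends: the corollary is stated without proof precisely because it follows from Theorem \ref{vi} by observing that each block $F(V_j)=p_jAp_j$ is a nonzero two-sided ideal, so simplicity collapses the index set to a singleton. One cosmetic point: the projections $p_i$ lie in the commutant of $A$ in $B(H)$ rather than in $A$ itself (they generally have infinite rank), but your ideal verification only uses that they commute with $A$ and are pairwise orthogonal, so the argument stands as written.
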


Therefore, the classification of purely atomic R$^*$-algebras reduces to that of R$^*$-algebras of the form $\bigoplus_{i\in I} F(V_i)$.
\begin{theorem}\label{wigner}
Let $(V_i)_{i\in I}$, $(W_j)_{j\in J}$ be two families of complex inner product spaces.
Let $H_i$, $K_j$ be the completions of $V_i$, $W_j$, respectively, for $i\in I$, $j\in J$.
Let $\phi\colon \bigoplus_{i\in I} F(V_i)\to \bigoplus_{j\in J} F(W_j)$ be a $^*$-isomorphism. 
Then there exist a bijection $\tau\colon I\to J$ and a linear surjective isometry $u_i\colon H_i\to K_{\tau(i)}$ for each $i\in I$ such that $u_i(V_i)=W_{\tau(i)}$ and $\phi(x) = u_ixu_i^{-1}\in F(W_{\tau(i)})\,\,(\subset \bigoplus_{j\in J} F(W_j))$ for every $x\in F(V_i)$, $i\in I$. 
\end{theorem}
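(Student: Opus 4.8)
The plan is to reduce the statement to the case of a single inner product space by recovering the index sets intrinsically from the $^*$-algebra structure, so that $\phi$ is forced to permute the summands, and then to realise a $^*$-isomorphism between two single summands spatially, in the spirit of Wigner's theorem. After discarding the indices with $V_i=\{0\}$ (which contribute nothing), every atom of $A:=\bigoplus_{i\in I}F(V_i)$ is a rank-one projection onto $\C\zeta$ for some nonzero $\zeta\in V_i$, and two atoms $p,q$ lie in the same summand exactly when $pAq\neq\{0\}$. One checks that this is an equivalence relation on the atoms of $A$ (symmetry by taking adjoints, transitivity by composing the partial isometries furnished by the earlier lemma that $pAq$ is at most one-dimensional), and that its classes are precisely the families of atoms coming from a single $F(V_i)$, since atoms in different summands act on orthogonal subspaces. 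As $\phi$ carries atoms to atoms and preserves the condition $pAq\neq\{0\}$, it induces a bijection $\tau\colon I\to J$ of the equivalence classes. Because $F(V_i)$ is linearly spanned by operators of the form $pxq$ with $p,q$ atoms of the $i$-th summand, this yields $\phi(F(V_i))=F(W_{\tau(i)})$, reducing everything to producing, for a fixed $^*$-isomorphism $\psi\colon F(V)\to F(W)$, a surjective linear isometry $u\colon H\to K$ with $u(V)=W$ that implements $\psi$.

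For the single-summand step I would write $\theta_{\zeta,\xi}$ for the rank-one operator $h\mapsto\langle h,\xi\rangle\zeta$, so that $\theta_{\zeta,\xi}^{*}=\theta_{\xi,\zeta}$ and $\theta_{\zeta,\xi}\theta_{\zeta',\xi'}=\langle\zeta',\xi\rangle\theta_{\zeta,\xi'}$; these operators with $\zeta,\xi\in V$ span $F(V)$. Fix a unit vector $\xi\in V$, set $p:=\theta_{\xi,\xi}$, and choose a unit vector $\eta\in W$ with $\psi(p)=\theta_{\eta,\eta}$ (possible since $\psi(p)$ is an atom of $F(W)$). For $\zeta\in V$ one has $\psi(\theta_{\zeta,\xi})=\psi(\theta_{\zeta,\xi})\psi(p)$, so $\psi(\theta_{\zeta,\xi})=\theta_{w,\eta}$ for a unique $w=\psi(\theta_{\zeta,\xi})\eta\in W$, and I would \emph{define} $u\zeta:=w$. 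Linearity of $u$ is immediate from linearity of $\psi$ and of $\zeta\mapsto\theta_{\zeta,\xi}$; the identity $\theta_{\zeta,\xi}^{*}\theta_{\zeta,\xi}=\|\zeta\|^{2}p$ gives
\[
\|u\zeta\|^{2}=\langle\psi(\theta_{\zeta,\xi})^{*}\psi(\theta_{\zeta,\xi})\eta,\eta\rangle=\|\zeta\|^{2}\langle\psi(p)\eta,\eta\rangle=\|\zeta\|^{2},
\]
so $u$ is isometric, and surjectivity of $u$ onto $W$ follows by running the same construction with $\psi^{-1}$. Thus $u$ is a surjective linear isometry $V\to W$, which extends to a surjective linear isometry $H\to K$ with $u(V)=W$.

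It then remains to check that $\psi=\operatorname{Ad}u$ on all of $F(V)$, where $\operatorname{Ad}u(x)=uxu^{-1}$ is a $^*$-isomorphism $F(V)\to F(W)$ precisely because $u(V)=W$. Using $u\xi=\psi(p)\eta=\eta$ and the fact that $u$ preserves inner products, a direct computation gives $\operatorname{Ad}u(\theta_{\zeta,\xi})=\theta_{u\zeta,\eta}=\psi(\theta_{\zeta,\xi})$ for every $\zeta\in V$; since both $\psi$ and $\operatorname{Ad}u$ are $^*$-homomorphisms they agree also on the adjoints $\theta_{\xi,\zeta}$, hence on all products $\theta_{\zeta,\xi}\theta_{\xi,\zeta'}=\theta_{\zeta,\zeta'}$, and these span $F(V)$. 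I expect the main obstacle to be this single-summand spatial realisation: one must extract the implementing isometry from the abstract $^*$-isomorphism and verify simultaneously that it is well defined, linear (rather than merely conjugate-linear, which is automatic here because $\psi$ is complex-linear, in contrast with the full Wigner theorem), isometric, surjective, and compatible with the dense subspaces $V$ and $W$. The reduction to a single summand, by contrast, is a clean application of the Murray--von Neumann-type equivalence of atoms.
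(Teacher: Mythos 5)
Your proposal is correct, but it takes a genuinely different route from the paper's on both halves of the argument. For the reduction to a single summand, the paper observes that a $^*$-isomorphism is isometric and argues topologically: the atoms of $F(V_i)$ form a connected component of the set of atoms of $\bigoplus_i F(V_i)$, and $\phi$ permutes these components; you instead define the purely algebraic relation $pAq\neq\{0\}$ on atoms and verify it is an equivalence relation whose classes are the summands, which works equally well (your transitivity check via $xx^*=\lVert x\rVert^2 p$, $x^*x=\lVert x\rVert^2 q$ is exactly what the one-dimensionality lemma provides) and has the small advantage of not invoking the metric structure. For the single-summand step, the paper applies the non-bijective version of Wigner's theorem (citing Geh\'er) to the action of $\phi$ on atoms, obtaining a linear \emph{or conjugate-linear} isometry, and must then rule out the conjugate-linear case by a commutator computation with self-adjoint $a,b$ satisfying $ab\neq ba$ --- which forces a separate treatment of the one-dimensional case, where no such pair exists. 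Your construction $u\zeta:=\psi(\theta_{\zeta,\xi})\eta$ is the classical spatial-implementation trick for algebras of rank-one operators: it is self-contained, automatically complex-linear (as you note, linearity of $\psi$ is fed in directly rather than recovered afterwards), uniform in all dimensions with no case split, and your verifications (isometry from $\theta_{\zeta,\xi}^*\theta_{\zeta,\xi}=\lVert\zeta\rVert^2 p$, surjectivity via the same construction for $\psi^{-1}$, agreement of $\psi$ and $\operatorname{Ad}u$ on the spanning set $\theta_{\zeta,\zeta'}$) are all sound. What the paper's route buys is conceptual: it shows the isomorphism is essentially determined by its action on the atoms alone, tying the result to the quantum-symmetry circle of ideas around Wigner's theorem; what yours buys is elementarity and the elimination of both the external citation and the linear/conjugate-linear dichotomy. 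Your remark about discarding indices with $V_i=\{0\}$ is a legitimate (and implicitly shared) normalization, not a gap.
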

\begin{proof}
Since $\phi$ is a $^*$-isomorphism between two R$^*$-algebras, it is isometric and preserves the collection of atoms. 
Thus $\phi$ maps each connected component of the set of atoms in $\bigoplus_{i\in I} F(V_i)$ onto some connected component of the set of atoms in $\bigoplus_{j\in J} F(W_j)$.
Fix $i_0\in I$. 
Then the set $S$ of atoms in $F(V_{i_0})$ is a connected component of the set of atoms in $\bigoplus_{i\in I} F(V_i)$. 
There exists a unique $\tau(i_0)=j_0\in J$ such that $\phi(S)$ coincides with the collection of atoms in $F(W_{j_0})$.
Set $V:=V_{i_0}$, $H:=H_{i_0}$, $W:=W_{j_0}$ and $K:=K_{j_0}$.
Assume that $V$ is at least two-dimensional. 
Observe that $S$ can be identified with a dense subset of the collection of atoms in $F(H)$.
Since a $^*$-isomorphism is isometric, we may apply Wigner's theorem (see for example \cite{Geh}) to show that there exists a linear or conjugate-linear surjective isometry $u\colon H\to K$ such that $u(V)=W$ and $\phi(p) =upu^{-1}$ for every $p\in S$. 
Take a pair of self-adjoint operators $a, b\in F(V)$ with $ab\neq ba$.
Since $\phi$ is linear and every self-adjoint operator in $F(V)$ can be written as a finite real-linear combination of $S$, we have 
\[
\phi(ab) = \phi\left(\frac{ab+ba}{2} + i\frac{ab-ba}{2i}\right) = u\frac{ab + ba}{2}u^{-1} + iu\frac{ab-ba}{2i}u^{-1}.
\]
This is equal to $uabu^{-1}$ if $u$ is linear, and to $ubau^{-1}$ if $u$ is conjugate-linear. 
On the other hand, since $\phi$ is multiplicative, we have 
\[
\phi(ab) = \phi(a)\phi(b) = uau^{-1}ubu^{-1}=uabu^{-1}.
\]
Since $ab\neq ba$, we conclude that $u$ is actually linear. 
It follows that $\phi(x) =uxu^{-1}$ for every $x\in F(V)$. 
If $V=H$ is one-dimensional then so is $W=K$. 
In this case it is clear that for any linear isometry $u\colon H\to K$ we have $\phi(x) =uxu^{-1}$ for every $x\in F(V)$. 
It is easy to see that $\tau\colon I\to J$ constructed in the above manner is a bijection.
\end{proof}

In particular, the classification of purely atomic simple R$^*$-algebras is equivalent to that of inner product spaces. 
How rich is the class of inner product spaces? 
If we restrict ourselves to inner product spaces with countable Hamel dimension, then there is nothing interesting: 
Every inner product space with countably infinite Hamel dimension is linearly isometric to $c_{00}$ as a subspace of $\ell_2$. (Proof: Consider the Gram--Schmidt procedure.)
On the other hand, we know that complete inner product spaces ($=$ Hilbert spaces) are completely classified by the cardinality of orthonormal basis. 

In contrast, the behavior of incomplete inner product spaces with uncountable Hamel dimension is mysterious (even in the separable case). 
A class of inner product spaces with uncountable Hamel dimension is given by ranges of operators: 
If  $X$ is a  Banach space and  $T\colon X\to H$ is a bounded linear operator, then the range $TX$ of $T$ is a linear subspace of $H$, which forms an inner product space.

\begin{proposition}\label{range}
\begin{enumerate}
\item For every separable Banach space $X$, there exists an injective bounded linear operator $T\colon X\to \ell_2$. 
\item Let $X_i$ be a Banach space, $H_i$ be a Hilbert space, and $T_i\colon X_i\to H_i$ be an injective bounded linear operator with image $T_iX_i=V_i\subset H_i$, $i=1, 2$. 
If the R$^*$-algebras $F(V_1)$ and $F(V_2)$ are $^*$-isomorphic, then $X_1$ and $X_2$ are isomorphic as Banach spaces, i.e., there exists a bounded linear bijection between $X_1$ and $X_2$. 
\end{enumerate}
\end{proposition}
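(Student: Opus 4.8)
For the first assertion, the plan is to exploit separability to produce a countable separating family of functionals and then package them into a single $\ell_2$-valued operator. Concretely, I would take a dense sequence $(x_n)_{n\geq 1}$ in $X$ and, by the Hahn--Banach theorem, choose $f_n\in X^*$ with $\lVert f_n\rVert=1$ and $f_n(x_n)=\lVert x_n\rVert$ for each $n$. These separate the points of $X$: for $x\neq 0$, choosing $x_n\to x$ gives $f_n(x)=\lVert x_n\rVert+f_n(x-x_n)\geq \lVert x_n\rVert-\lVert x-x_n\rVert\to\lVert x\rVert>0$, so $f_n(x)\neq 0$ for large $n$. I would then define $T\colon X\to \ell_2$ by $Tx:=(2^{-n}f_n(x))_{n\geq 1}$. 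Boundedness is immediate from $\lvert f_n(x)\rvert\leq \lVert x\rVert$ together with $\sum_n 4^{-n}<\infty$, and injectivity is exactly the separation property just established.

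For the second assertion, I would first reduce everything to the classification already proved. Viewing each $F(V_i)$ as a direct sum $\bigoplus_{i\in I} F(V_i)$ over a singleton index set, Theorem \ref{wigner} applies directly and shows that the given $^*$-isomorphism $\phi\colon F(V_1)\to F(V_2)$ is implemented by a surjective linear isometry $u\colon H_1\to H_2$ with $u(V_1)=V_2$. Since each $T_i$ is injective and bounded, it is a linear bijection of $X_i$ onto $V_i$ (as vector spaces), so the composition
\[
S:=T_2^{-1}\circ u\circ T_1\colon X_1\to X_2
\]
is a well-defined linear bijection. It then remains only to upgrade this algebraic isomorphism to a Banach-space isomorphism.

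The hard part will be exactly this last upgrade, because $T_2^{-1}\colon V_2\to X_2$ is in general unbounded: $V_2$ need not be closed in $H_2$, so one cannot see $S$ to be bounded merely by composing bounded maps. I would circumvent this with the closed graph theorem. Suppose $x_n\to x$ in $X_1$ and $Sx_n\to y$ in $X_2$. Boundedness of $T_1$, $u$ and $T_2$ yields $uT_1x_n\to uT_1x$ and $T_2Sx_n\to T_2y$ in $H_2$; but $uT_1x_n\in V_2=\operatorname{ran}T_2$ forces $T_2Sx_n=uT_1x_n$, whence $T_2y=uT_1x=T_2Sx$, and injectivity of $T_2$ gives $y=Sx$. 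Thus $S$ has closed graph and is bounded. Finally, since $S$ is a bounded linear bijection between Banach spaces, the open mapping theorem makes $S^{-1}$ bounded as well, so $S$ is the desired Banach-space isomorphism between $X_1$ and $X_2$.
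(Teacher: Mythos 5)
Your proposal is correct and follows essentially the same route as the paper: for (1) the same dense-sequence-plus-Hahn--Banach construction with weights $2^{-n}$, and for (2) the same reduction via Theorem \ref{wigner} followed by the closed graph theorem (whose verification the paper leaves as ``easy to see'' and you rightly spell out) and the open mapping theorem. The only cosmetic slip is that Theorem \ref{wigner} yields the surjective isometry $u$ between the completions $\overline{V_1}$ and $\overline{V_2}$ rather than between $H_1$ and $H_2$ (the $V_i$ need not be dense in the $H_i$), but this is harmless since your argument only ever evaluates $u$ on $\operatorname{ran} T_1$ and its limits.
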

\begin{proof}
This proposition is a consequence of well-known facts (see for example \cite[Proposition 3.1 and Remark 3.14]{COS}). 
For convenience{,} we give a proof.\\
(1) Take a sequence $x_n$, $n\geq 1$, of elements that is dense in the unit sphere of $X$. 
By the Hahn--Banach theorem for each $n\geq 1$ we may take a linear functional $f_n\colon X\to \C$ with $\lVert f_n\rVert =1$, $f_n(x_n)=1$. 
Define the map $T\colon X\to \ell_2$ by $T(x) = (f_n(x)/2^n)_{n\geq 1}$. 
It is a routine exercise to show that  $T$ is well-defined, bounded, linear, and injective.\\
(2) Take a $^*$-isomorphism $\phi\colon F(V_1)\to F(V_2)$. 
By Theorem \ref{wigner} we may take a linear surjective isometry $u\colon K_1\to K_2$ such that $u(V_1)=V_2$ and $\phi(x) = uxu^{-1}$, $x\in F(V_1)$, where $K_i$ is the completion of $V_i$, $i=1, 2$. 
It is easy to see that the mapping $T_2^{-1}\circ u\circ T_1\colon X_1\to X_2$ is a linear bijection with closed graph. 
It follows by the closed graph theorem that this mapping is a continuous bijection (with continuous inverse).
\end{proof}

Since there are many isomorphism classes of separable Banach spaces, we see that there are many simple purely infinite R$^*$-algebras arising from ranges of operators. 
What if we restrict ourselves to ranges of operators in $B(H)$?
\begin{proposition}
Let $x, y\in B(H)$ be injective operators with dense range. 
Then the R$^*$-algebras $F(\operatorname{ran} x)$ and $F(\operatorname{ran} y)$ are $^*$-isomorphic if and only if there exist bounded linear bijections $a, b\in B(H)$ such that $y=axb$. 
\end{proposition}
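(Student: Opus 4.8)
The plan is to first convert the $^*$-isomorphism condition into a purely geometric statement about the operator ranges and then relate that statement to the factorization $y=axb$. Since $x$ and $y$ are injective with dense range, the inner product spaces $V_1:=\operatorname{ran} x$ and $V_2:=\operatorname{ran} y$ both have completion $H$. Hence, applying Theorem \ref{wigner} to the one-term families $F(V_1)$ and $F(V_2)$, the algebras $F(\operatorname{ran} x)$ and $F(\operatorname{ran} y)$ are $^*$-isomorphic if and only if there is a surjective linear isometry (i.e.\ a unitary) $u\in B(H)$ with $u(\operatorname{ran} x)=\operatorname{ran} y$; conversely, any such $u$ induces the $^*$-isomorphism $z\mapsto uzu^*$. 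Thus it suffices to show that such a unitary exists if and only if $y=axb$ for some bounded bijections $a,b\in B(H)$.

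For the forward direction I would start from a unitary $u$ with $u(\operatorname{ran} x)=\operatorname{ran} y$. Then $ux$ and $y$ are both injective with dense range and share the range $\operatorname{ran}(ux)=u(\operatorname{ran} x)=\operatorname{ran} y$. By Douglas' range-inclusion theorem the two inclusions $\operatorname{ran} y\subset\operatorname{ran}(ux)$ and $\operatorname{ran}(ux)\subset\operatorname{ran} y$ produce $b,b'\in B(H)$ with $y=(ux)b$ and $ux=yb'$; substituting one into the other and using injectivity of $ux$ and $y$ forces $b'b=bb'=1$, so $b$ is a bounded bijection. Setting $a:=u$ gives $y=axb$.

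The backward direction is the heart of the matter, and the main obstacle is that the hypothesis only supplies a bounded bijection, not an isometry: from $y=axb$ we read off $\operatorname{ran} y=a(\operatorname{ran} x)$ (as $b$ is surjective), so $a$ maps $V_1$ onto $V_2$, but $a$ need not preserve the inner product, whereas Theorem \ref{wigner} demands a unitary. To upgrade $a$ I would pass to the positive operator $S:=(xx^*)^{1/2}$, which is injective with dense range and satisfies $\operatorname{ran} S=\operatorname{ran} x=V_1$. Consider the polar decomposition $aS=u\lvert aS\rvert$; since $a$ is invertible and $S$ is injective with dense range, $aS$ is injective with dense range, so $u$ is a genuine unitary. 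Then $\operatorname{ran}(aS)=u\,\operatorname{ran}\lvert aS\rvert=u\,\operatorname{ran}(Sa^*aS)^{1/2}$, and the decisive computation is $\operatorname{ran}(Sa^*aS)^{1/2}=\operatorname{ran} S$: because $a$ is invertible we have $c^{-1}1\le a^*a\le c1$ for some $c\ge 1$, and conjugating this operator inequality by $S$ gives $c^{-1}S^2\le Sa^*aS\le cS^2$, whence Douglas' theorem yields $\operatorname{ran}(Sa^*aS)^{1/2}=\operatorname{ran}(S^2)^{1/2}=\operatorname{ran} S$. Therefore $\operatorname{ran} y=\operatorname{ran}(aS)=u(\operatorname{ran} S)=u(\operatorname{ran} x)$, which is precisely the unitary condition, and the proof closes through the reduction of the first paragraph. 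I expect the only delicate point to be exactly this passage from the bounded bijection to the unitary; the trick of replacing $x$ by the self-adjoint $S=(xx^*)^{1/2}$ (so that $\operatorname{ran} S=\operatorname{ran} x$ while $S=S^*$) is what makes the comparison $c^{-1}S^{2}\le Sa^{*}aS\le cS^{2}$ applicable, after which everything reduces to standard operator-range facts.
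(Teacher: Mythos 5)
Your proof is correct and follows essentially the same route as the paper's: both reduce the statement via Theorem \ref{wigner} to the existence of a unitary $u$ with $u(\operatorname{ran} x)=\operatorname{ran} y$, and both manufacture that unitary in the backward direction from the polar decomposition of $a(xx^*)^{1/2}$. The only differences are cosmetic --- the paper handles the forward direction by applying the closed graph theorem to $y^{-1}\circ u\circ x$ where you invoke Douglas' theorem twice, and in the backward direction it computes $\operatorname{ran}\lvert a(xx^*)^{1/2}\rvert=\operatorname{ran}\bigl((xx^*)^{1/2}a^*\bigr)=\operatorname{ran} x$ directly from the standard fact $\operatorname{ran}\lvert T\rvert=\operatorname{ran} T^*$ rather than through your Douglas squeeze $c^{-1}S^2\le Sa^*aS\le cS^2$.
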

\begin{proof}
This is a corollary to \cite[Theorem 3.4]{FW}. 
For convenience we repeat a proof.
Suppose that $F(\operatorname{ran} x)$ and $F(\operatorname{ran} y)$ are $^*$-isomorphic. 
By Theorem \ref{wigner} there exists a unitary operator $u\colon H\to H$ with $u(\operatorname{ran} x)=\operatorname{ran}y$. 
It follows by the closed graph theorem that the map $a= y^{-1}\circ u\circ x$ is a bounded linear bijection on $H$, and $y=uxa^{-1}$.
Let $a, b\in B(H)$ be bounded linear bijections and $y=axb$. 
Then $\operatorname{ran} y = \operatorname{ran}(ax) = \operatorname{ran} (a(xx^*)^{1/2})$. 
Let $a(xx^*)^{1/2}=v\lvert a(xx^*)^{1/2}\rvert$ be the polar decomposition of $a(xx^*)^{1/2}$. 
Then $v$ is unitary and 
\[
\operatorname{ran} y 
= \operatorname{ran} (v\lvert a(xx^*)^{1/2}\rvert)
= v(\operatorname{ran} (\lvert a(xx^*)^{1/2}\rvert))= v(\operatorname{ran} ((xx^*)^{1/2}a^*) =v(\operatorname{ran} x),
\]
which implies that $F(\operatorname{ran} x)$ is $^*$-isomorphic to $F(\operatorname{ran} y)$.
\end{proof}

In the above proposition, we see for example that if in addition $x$ belongs to the Schatten--von Neumann $p$-ideal for some $1\leq p<\infty$, then so does $y$.
Therefore, there are many examples of mutually nonisometric inner product spaces that arise as ranges of compact operators in $B(H)$. 
See \cite{FW}  and \cite{COS} for more on operator ranges.

Let us show that purely atomic R$^*$-algebras can also be algebraically characterized by {an} Artinian-type condition. 
\begin{proposition}
An R$^*$-algebra $A$ is purely atomic if and only if there exists no strictly decreasing infinite sequence of projections $p_1\geq p_2\geq\cdots$ in $A$, or in other words, there exists no infinite chain of descending principal right ideals of $A$. 
\end{proposition}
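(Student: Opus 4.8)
The plan is to prove both implications directly, after first recording that the two formulations of the chain condition coincide. By the correspondence between principal right ideals and projections recalled in Section~\ref{com}, every principal right ideal of $A$ has the form $pA$ for a unique $p\in\P(A)$, and $p\le q$ iff $pA\subseteq qA$, with strictness on one side matching strictness on the other (if $pA=qA$ with $p\le q$ then $q\in pA$ forces $p=q$, as $pq=p$). Hence a strictly decreasing sequence of projections corresponds exactly to a strictly descending chain of principal right ideals, and it suffices to argue with the projection formulation.

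For the implication that the descending chain condition (DCC) forces pure atomicity, I would use a greedy atom-subtraction. First I would observe that DCC guarantees that every nonzero projection $q$ dominates an atom: if $q$ is not already an atom, pick $0<q_1<q$ and iterate; the resulting strictly decreasing sequence of \emph{nonzero} projections must be finite by DCC, and its last term is an atom below $q$. Now, given any $p\in\P(A)$, I would build projections $p=p_0>p_1>p_2>\cdots$ by setting $p_k:=p_{k-1}-a_k$, where $a_k\le p_{k-1}$ is an atom. One checks routinely that $p_{k-1}-a_k$ is again a projection in $A$ and that the chosen atoms are mutually orthogonal: since $p_k a_j=0$ for all $j\le k$ (an easy induction using $p_{k-1}a_k=a_k$), the relation $a_{k+1}\le p_k$ yields $a_{k+1}a_j=a_{k+1}p_k a_j=0$. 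As long as $p_k\neq 0$ there is an atom below it, so the process continues; were it never to stop, it would produce an infinite strictly decreasing chain, contradicting DCC. Thus $p_n=0$ for some $n$, giving $p=a_1+\cdots+a_n$ as a finite sum of atoms, so $A$ is purely atomic.

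For the converse I would assume $A$ is purely atomic and, for contradiction, that there is an infinite strictly decreasing chain $p_1>p_2>\cdots$. Each $p_n\le p_1$, hence $p_1p_np_1=p_n$ and $p_n\in p_1Ap_1$, which is finite-dimensional by the preceding lemma. Passing to the principal right ideals $p_n(p_1Ap_1)$ inside $p_1Ap_1$, the strict decrease of the projections gives (by the same computation as in the first paragraph) a strictly descending chain of subspaces of the finite-dimensional vector space $p_1Ap_1$, which is impossible. This contradiction completes the proof.

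The one genuinely non-formal point, which I would flag as the main subtlety, is that atoms of $A$ need not have finite rank as projections on $H$ (already $\C\cdot\operatorname{id}_H\subset B(H)$ has $\operatorname{id}_H$ as an atom of infinite Hilbert-space rank). One therefore cannot control the chain by the dimension of $p_nH$; the argument must instead stay inside the \emph{finite-dimensional algebra} $p_1Ap_1$ and invoke the earlier lemma characterizing pure atomicity by finite-dimensionality of the corners $pAp$. Everything else is bookkeeping: verifying that $p_{k-1}-a_k$ is a projection in $A$, the orthogonality of the extracted atoms, and the equivalence between strict inclusions of projections and of the associated right ideals.
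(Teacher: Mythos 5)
Your proof is correct and is essentially the paper's argument: the purely atomic $\Rightarrow$ DCC direction rests on the same lemma (finite-dimensionality of the corner $p_1Ap_1$, which you rightly flag as the point that saves the argument when atoms have infinite rank on $H$), and your converse is the same descending-chain descent, merely organized directly as greedy atom extraction (invoking DCC twice, once to find an atom below any nonzero projection and once to terminate the extraction), where the paper argues contrapositively by splitting a projection in the set $S$ of projections not expressible as finite orthogonal sums of atoms and keeping a bad half. Your careful bookkeeping (strictness of the ideal chain, orthogonality of the extracted atoms, and $p-a\in\P(A)$ for $a\leq p$) fills in details the paper leaves implicit, but introduces no new idea beyond its proof.
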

\begin{proof}
Every purely atomic R$^*$-algebra $A$ satisfies the latter condition because for every $p\in \P(A)$ the algebra $pAp$ is finite-dimensional. 
Suppose that $A$ is not purely atomic. 
Consider the collection $S$ of all projections $p$ with the following property, which is nonempty by the assumption: 
For any pairwise orthogonal subprojections $q_1, q_2, \ldots, q_n$ of $p$ with $p=q_1+q_2+\cdots+q_n$, at least one of  $q_1, q_2, \ldots, q_n$ is not an atom. 
If $p\in S$, then $p$ is not an atom, so there exists a subprojection $q$ of $p$ such that $q\neq 0\neq p-q$. 
It is easy to see that at least one of the two projections $q, p-q$ is in $S$. 
Therefore, we may choose a strictly decreasing infinite sequence of projections $p_1\geq p_2\geq\cdots$ in $S$. 
\end{proof}

What if one considers ascending chain condition instead of descending one?
\begin{proposition}\label{descend}
For an R$^*$-algebra $A$ the following three conditions are equivalent. 
\begin{itemize}
\item There exists no strictly increasing infinite sequence of projections $p_1\leq p_2\leq\cdots$ in $A$, or in other words, there exists no infinite chain of ascending principal right  ideals of $A$.
\item There is no infinite family of mutually orthogonal nonzero projections in $A$.
\item $A$ is finite-dimensional.
\end{itemize}
\end{proposition}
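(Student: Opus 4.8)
The plan is to establish the cycle (iii)$\Rightarrow$(i)$\Rightarrow$(ii)$\Rightarrow$(iii), where I label the three displayed conditions (i) (the ascending chain condition), (ii) (no infinite orthogonal family), and (iii) (finite-dimensionality) from top to bottom. Two of the implications are routine. For (iii)$\Rightarrow$(i): given a strictly increasing chain $p_1\leq p_2\leq\cdots$ in $\P(A)$, the elements $p_1, p_2-p_1, p_3-p_2,\dots$ are nonzero pairwise orthogonal projections (as one checks by comparing ranges, exactly as in the decreasing case treated in the preceding proposition), hence linearly independent, so such a chain cannot live in a finite-dimensional algebra. For (i)$\Rightarrow$(ii) I argue contrapositively: from an infinite orthogonal family $\{e_k\}_{k\geq 1}$ of nonzero projections, the partial sums $p_n:=e_1+\cdots+e_n$ lie in $A$, are projections by orthogonality, and satisfy $p_n\leq p_{n+1}$ with $p_{n+1}-p_n=e_{n+1}\neq 0$, giving a strictly increasing chain and violating (i).

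The substance is (ii)$\Rightarrow$(iii), and the first thing I would extract from (ii) is that $A$ is \emph{purely atomic}. If it were not, the preceding proposition (characterizing pure atomicity by the absence of a strictly decreasing chain) produces projections $p_1\geq p_2\geq\cdots$ with every inclusion strict. Then $q_k:=p_k-p_{k+1}\in\P(A)$ is nonzero, and for $k<l$ the range of $q_l$ lies in $p_{k+1}H$ while that of $q_k$ is orthogonal to $p_{k+1}H$; hence the $q_k$ are mutually orthogonal, an infinite orthogonal family contradicting (ii). So $A$ is purely atomic.

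The second extraction from (ii) is a \emph{unit}. By Zorn's lemma choose a maximal orthogonal family of nonzero projections; by (ii) it is finite, say $e_1,\dots,e_n$, and set $e:=\sum_i e_i$. To see $e$ is a unit, take $x\in A$ and consider $x-ex=(1-e)x$ (computed in $B(H)$): its range lies in $(eH)^{\perp}$, so $l(x-ex)\leq 1-e$ is orthogonal to every $e_i$; since $l(x-ex)\in A$ by Proposition \ref{operation}(3), maximality forces $x-ex=0$, and the symmetric computation with $r(x-xe)$ gives $xe=x$. Thus $A$ is unital, and being purely atomic its unit is a finite sum of atoms, $1=a_1+\cdots+a_m$. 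Therefore $A=1A1=\sum_{k,l}a_kAa_l$, and each corner $a_kAa_l$ is at most one-dimensional by the lemma on $pAq$ for atoms $p,q$, so $\dim A\leq m^2<\infty$. I expect the main obstacle to be precisely this (ii)$\Rightarrow$(iii) step, and within it the two independent deductions from the single hypothesis (ii)—pure atomicity and the existence of a unit—each of which hinges on converting a putative failure into an explicit infinite orthogonal family; once both are secured, the dimension count is immediate. Alternatively, one could finish from pure atomicity via the classification $A\cong\bigoplus_{i\in I}F(V_i)$ of Theorem \ref{vi}, observing that (ii) forces $I$ to be finite and each $V_i$ finite-dimensional.
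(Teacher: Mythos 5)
Your proof is correct, and it arrives at the same waypoint as the paper---$A$ is purely atomic and unital, hence finite-dimensional---but by noticeably different mechanisms. The paper puts the weight on the ascending chain condition rather than on condition (ii): from ACC, $\P(A)$ has a maximal element, which is unique because $\P(A)$ is a lattice (the join of two maximal projections would dominate both), and this unique maximal projection dominates $l(x)$ and $r(x)$ for every $x\in A$, hence is a unit of $A$; once the unit exists, ACC transfers to the descending chain condition via $p\mapsto 1-p$, so the preceding proposition yields pure atomicity, and finite-dimensionality is then read off from Theorem \ref{vi} (a \emph{unital} algebra of the form $\bigoplus_{i\in I}F(V_i)$ forces $I$ finite and each $V_i$ finite-dimensional). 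You instead extract both ingredients directly from (ii): pure atomicity by orthogonalizing a putative strictly decreasing chain into the nonzero differences $p_k-p_{k+1}$, and the unit by a Zorn-maximal orthogonal family---which (ii) forces to be finite---combined with the support-projection argument $l(x-ex)\leq 1-e$ and $r(x-xe)\leq 1-e$ via Proposition \ref{operation}(3). Your finish is also more self-contained: the bound $\dim A\leq m^2$ from the one-dimensionality of the corners $a_kAa_l$ (the first lemma of Section \ref{atom}) bypasses Theorem \ref{vi} entirely, though you note that alternative. What the paper's route buys is brevity: the unit falls out of the lattice structure in two lines, and the $p\mapsto 1-p$ symmetry replaces your separate orthogonalization of a decreasing chain. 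What your route buys is an argument that runs from the combinatorially weakest-looking hypothesis (ii) and whose only external input is the Artinian characterization of pure atomicity, making the finite-dimension count explicit rather than delegated to the classification theorem.
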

\begin{proof}
It is clear that the first and the second conditions are equivalent and led by the third condition.
Let $A$ satisfy the first condition. 
Then clearly $\P(A)$ has a maximal element. 
Since $\P(A)$ is a lattice it is the unique maximal element of $\P(A)$. 
Moreover{,} it is easy to see that the unique maximal element of $\P(A)$ must be a unit of $A$. 
It follows that there exists no strictly decreasing infinite sequence of projections $p_1\geq p_2\geq\cdots$ in $A$, because if there were then the sequence $1-p_1,  1-p_2, \ldots$ would be strictly increasing. 
Therefore, $A$ is purely atomic and unital, which together with Theorem \ref{vi} implies the finite-dimensionality of $A$.
\end{proof}

\section{R$^*$-algebras in comparison with von Neumann algebras, Baer $^*$-rings and continuous geometries}\label{comparison}
Again let us recall the theory of von Neumann algebras. 
In the type classification of von Neumann algebras, two important concepts appear: abelian projections and finite projections. 
Recall that a projection $p$ of a von Neumann algebra $M$ is finite if every partial isometry $v\in M$ with $vv^*\leq v^*v=p$ satisfies $vv^*=p$. 
Finiteness of projections in R$^*$-algebras is automatic. 
\begin{proposition}\label{finite}
Let $A$ be an R$^*$-algebra and $v\in A$ be a partial isometry. 
If $vv^*\leq v^*v$, then $vv^*=v^*v$.
\end{proposition}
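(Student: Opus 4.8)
The plan is to argue by contradiction and exploit the one feature that distinguishes R$^*$-algebras from arbitrary operator algebras, namely that they are algebraic (Theorem \ref{equi}~(2)). The intuition is that a partial isometry with $vv^*<v^*v$ would behave like a proper, unilateral-shift-type isometry, and the powers of such an operator are linearly independent; this would force $\A(v)$ to be infinite-dimensional, which algebraicity forbids.

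Concretely, set $p:=v^*v$ and $q:=vv^*$, so that $p,q\in\P(A)$ and the hypothesis reads $q\leq p$. Suppose for contradiction that $q\neq p$. Since $v$ is a partial isometry, it maps $pH=\operatorname{ran}(v^*v)$ isometrically onto $qH=\operatorname{ran} v$ and annihilates $(pH)^{\perp}=\ker v$. The assumption $q\leq p$ means $qH\subseteq pH$, so $pH$ is invariant under $v$ and the restriction $S:=v|_{pH}$ is an isometry of $pH$ whose range is $qH\subsetneq pH$; that is, $S$ is a \emph{proper} isometry. Because $v$ kills $(pH)^{\perp}$, one checks inductively that $v^n$ acts as $S^n$ on $pH$ and as $0$ on $(pH)^{\perp}$.

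The key step is then to show that $v,v^2,v^3,\dots$ are linearly independent. Since $p-q\neq 0$, I would choose a unit vector $\xi\in(p-q)H$, so that $\xi\in pH$ and $\xi\perp qH=\operatorname{ran} S$. Using $S^*S=\operatorname{id}_{pH}$ together with $\xi\perp\operatorname{ran} S$, a direct computation gives $\langle S^m\xi,S^n\xi\rangle=\delta_{mn}$ for all $m,n\geq 0$, so $\{S^n\xi\}_{n\geq 0}$ is an orthonormal sequence (the usual Wold-type observation for a proper isometry). Consequently, if $\sum_{n=1}^{N}c_nv^n=0$, then restricting to $pH$ and evaluating at $\xi$ yields $\sum_{n=1}^{N}c_nS^n\xi=0$, and orthonormality forces every $c_n=0$. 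Hence the powers $v^n$ span an infinite-dimensional subspace of $\A(v)$, contradicting the algebraicity of $A$. Therefore $q=p$, i.e.\ $vv^*=v^*v$. The only real obstacle is recognizing that proper isometries are exactly what algebraicity rules out and making the orthonormality of $\{S^n\xi\}$ precise; the remaining manipulations with support projections are routine.
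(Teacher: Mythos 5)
Your proof is correct, and it diverges from the paper's in an instructive way. Both arguments begin with the same reduction: writing $p=v^*v$, $q=vv^*$, the hypothesis $q\leq p$ together with $q\neq p$ makes $S:=v|_{pH}$ a \emph{proper} isometry of $pH$, with $v=S\oplus 0$ relative to $H=pH\oplus(pH)^{\perp}$. From there the paper takes the spectral route: it invokes the well-known fact that a proper isometry has infinite spectrum (citing the Wold--von Neumann decomposition and the spectrum of the unilateral shift), so $\sigma(v)\supseteq\sigma(S)$ is infinite, contradicting Theorem \ref{equi}(4). You instead contradict algebraicity, Theorem \ref{equi}(2): picking a unit vector $\xi\in(p-q)H$, the computation $\langle S^m\xi,S^n\xi\rangle=\langle S^{m-n}\xi,\xi\rangle=0$ for $m>n$ (since $S^{m-n}\xi\in qH\perp\xi$) shows $\{S^n\xi\}_{n\geq 0}$ is orthonormal, whence the powers $v^n$ are linearly independent and $\A(v)$ is infinite-dimensional. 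Your version is more elementary and fully self-contained --- it avoids the Wold decomposition and any spectral computation, in effect directly verifying the only fragment of that machinery the proof needs (a proper isometry admits a wandering vector with orthonormal orbit). The paper's version is shorter modulo standard operator-theoretic facts. All the details you flag as needing care (invariance of $pH$ under $v$, hence $v^n\xi=S^n\xi$; existence of $\xi$ since $p-q$ is a nonzero projection when $q\leq p$, $q\neq p$) do check out, so there is no gap.
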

\begin{proof}
Assume that $v\in A$ is a partial isometry with $vv^*\leq v^*v$ and $vv^*\neq v^*v$. 
Then the operator $v$ restricted to $(\ker v)^{\perp}$ is a proper isometry into $(\ker v)^{\perp}$. 
It is well-known that a proper isometry has infinite spectrum.
(To prove it, use e.g.\ the Wold--von Neumann decomposition theorem and the fact that the unilateral shift has infinite spectrum.)
This contradicts the assumption that $A$ is an R$^*$-algebra.
\end{proof}

In what follows we consider abelian projections.
For a von Neumann algebra $M$, recall that $M$ is of type I if and only if there is a family of abelian projections whose least upper bound is the unit.

We say a projection $p$ of an R$^*$-algebra $A$ is \emph{abelian} if the R$^*$-algebra $pAp$ is commutative. 
Clearly, an atom is an abelian projection.
The author is not aware whether there is a natural class of R$^*$-algebras that plays a role similar to that played by the class of type I von Neumann algebras in the theory of von Neumann algebras.

Let $A$ be an R$^*$-algebra. 
Two projections $p, q\in \P(A)$ are said to be \emph{equivalent} in $A$ (in the sense of Murray--von Neumann, and written $p\sim q$)  if there exists a partial isometry $v\in A$ with $v^*v=p$ and $vv^*=q$.
This is an equivalence relation in $\P(A)$. 

\begin{lemma}\label{lr}
Let $A$ be an R$^*$-algebra. 
For $x\in A$, $l(x)\sim r(x)$.
\end{lemma}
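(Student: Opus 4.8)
The goal is to show that for any $x$ in an R$^*$-algebra $A$, the left and right support projections $l(x)$ and $r(x)$ are Murray--von Neumann equivalent inside $A$, meaning there is a partial isometry $v \in A$ with $v^*v = r(x)$ and $vv^* = l(x)$. The plan is to extract this partial isometry directly from the polar decomposition of $x$, which is the most natural source of a partial isometry associated to $x$.

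First I would write the polar decomposition $x = v\lvert x\rvert$ of $x$ in $B(H)$, where $\lvert x\rvert = (x^*x)^{1/2}$. By Proposition \ref{operation} (3), the key structural fact I am allowed to invoke, both $v$ and $\lvert x\rvert$ already lie in $A$, not merely in $B(H)$. This is exactly where the R$^*$-algebra hypothesis does its work: in a general $^*$-subalgebra the partial isometry from the polar decomposition need not belong to the algebra, but here functional calculus keeps us inside $A$.

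The remaining step is purely to recall the defining relations of the polar decomposition. By definition $v$ is the partial isometry satisfying $v^*v = r(x)$ and $vv^* = l(x)$, since $r(x)$ is the projection onto $(\ker x)^\perp$ and $l(x)$ is the projection onto the closure of $\operatorname{ran} x$ (these are precisely the initial and final space relations stated in the discussion of polar decomposition preceding Theorem \ref{equi}). Thus $v \in A$ is a partial isometry implementing $r(x) \sim l(x)$, which is the assertion. The equivalence is symmetric, so whether we phrase it as $l(x) \sim r(x)$ or $r(x) \sim l(x)$ makes no difference.

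I do not anticipate any genuine obstacle here; the entire content of the lemma is the observation that the partial isometry in the polar decomposition stays inside the R$^*$-algebra, and that fact has already been established in Proposition \ref{operation} (3). The only thing to be careful about is to cite that proposition explicitly rather than silently using the polar decomposition of $B(H)$, since membership of $v$ in $A$ (as opposed to $B(H)$) is the crux of the statement.
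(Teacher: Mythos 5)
Your proof is correct and is essentially identical to the paper's: both take the polar decomposition $x = v\lvert x\rvert$, invoke Proposition \ref{operation} (3) to place $v$ in $A$, and conclude from $vv^* = l(x)$, $v^*v = r(x)$. You rightly identify that membership of $v$ in $A$ is the only nontrivial point.
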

\begin{proof}
Let $x=v\lvert x\rvert$ be the polar decomposition. 
By Proposition \ref{operation} (3), $v$ is a partial isometry in $A$. 
Since $vv^*=l(x)$ and $v^*v=r(x)$, we have $l(x)\sim r(x)$.
\end{proof}

\begin{lemma}\label{p1p2}
Let $A$ be an R$^*$-algebra and $p\in \P(A)$. 
Then $p$ is nonabelian if and only if there exists a pair of nonzero subprojections $p_1, p_2$ of $p$ in $A$ with $p_1p_2=0$ and $p_1\sim p_2$. 
\end{lemma}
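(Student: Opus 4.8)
The plan is to prove the two implications separately. The reverse implication is the easy one, so I would dispatch it first; the forward implication requires manufacturing orthogonal equivalent subprojections out of the mere failure of commutativity, and that is where the real work lies. Throughout I would work inside the reduced algebra $pAp$, which is itself a unital R$^*$-algebra with unit $p$, so that ``abelian'' means precisely ``$pAp$ is commutative''.

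For the reverse direction, I would start from nonzero orthogonal subprojections $p_1, p_2 \le p$ together with a partial isometry $v \in A$ witnessing $p_1 \sim p_2$, say $v^*v = p_1$ and $vv^* = p_2$. Since $v = p_2 v p_1$ and $p_1, p_2 \le p$, both $v$ and $p_1$ lie in $pAp$. A one-line computation using $p_1 p_2 = 0$ then gives $p_1 v = p_1 p_2 v p_1 = 0$ while $v p_1 = p_2 v p_1 = v \ne 0$, so $p_1$ and $v$ fail to commute and $p$ is nonabelian.

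For the forward direction I would first reduce to two projections. Because every self-adjoint element of $pAp$ has finite spectrum by Theorem \ref{equi} and hence is a real linear combination of its spectral projections by Proposition \ref{operation}(2), and because an arbitrary element splits into real and imaginary parts, the algebra $pAp$ is linearly spanned by its projections. Consequently, if $p$ is nonabelian there must be projections $e, f \le p$ with $ef \ne fe$. The decisive step is then to set $w := ef(p-e) \in pAp$ and to observe that $l(w) \le e$ (left multiplication by $e$) and $r(w) \le p - e$ (right multiplication by $p-e$), so that $l(w)$ and $r(w)$ land in the orthogonal corners $e$ and $p-e$ automatically; by Proposition \ref{operation}(3) they belong to $A$, and by Lemma \ref{lr} they are equivalent. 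It remains only to check $w \ne 0$: if $w = 0$ then $ef = efe$, which is self-adjoint, forcing $ef = (ef)^* = fe$ and contradicting $ef \ne fe$. Taking $p_1 := l(w)$ and $p_2 := r(w)$ finishes the argument.

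The main obstacle is locating the element $w = ef(p-e)$ in the forward direction: its whole virtue is that the flanking factors $e$ and $p-e$ push $l(w)$ and $r(w)$ into orthogonal corners for free, so the equivalence $l(w) \sim r(w)$ supplied by Lemma \ref{lr} is exactly the orthogonal equivalent pair we want. The only point requiring genuine care is the nonvanishing of $w$, and that follows cleanly from the self-adjointness of $efe$ as above; the reduction to two non-commuting projections via spectral decomposition is routine given the earlier results.
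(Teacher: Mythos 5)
Your proof is correct and takes essentially the same route as the paper: the paper also spans $pAp$ by projections to find noncommuting projections $p_1, p_2 \leq p$, forms the corner-compressed element $x := (p-p_2)p_1p_2 = p_1p_2 - p_2p_1p_2$ (the mirror image of your $w = ef(p-e) = ef - efe$), shows $x \neq 0$ because otherwise $p_1p_2$ would be self-adjoint and hence $p_1p_2 = p_2p_1$, and then invokes Lemma \ref{lr} to get $l(x) \sim r(x)$ with orthogonal supports. The reverse direction differs only cosmetically: the paper observes that a commutative algebra cannot contain a partial isometry with $v^*v \neq vv^*$, whereas you exhibit the explicit noncommuting pair $p_1$ and $v$.
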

\begin{proof}
If the latter condition is satisfied then the algebra $pAp$ has a partial isometry $v$ with $v^*v=p_1\neq p_2=vv^*$, thus $p$ is nonabelian. 
Suppose that $p$ is nonabelian, which means that $pAp$ is noncommutative. 
Since $pAp$ is an R$^*$-algebra, it is spanned by projections in $pAp$. 
It follows that there exists a pair of projections $p_1, p_2\in \P(pAp)$ such that $p_1p_2\neq p_2p_1$. 
Thus $p_1p_2-p_2p_1p_2$ is not self-adjoint, and we obtain  $x:=(p-p_2)p_1p_2 = p_1p_2-p_2p_1p_2\neq 0$.
Then $l(x)\leq p-p_2$ and $r(x)\leq p_2$, thus $l(x)$ and $r(x)$ are mutually orthogonal and equivalent.
\end{proof}

The \emph{center} of an R$^*$-algebra $A$ is the collection $\{x\in A\mid xy=yx\text{ for all }y\in A\}$. 
An element of $A$ is said to be central if it belongs to the center of $A$.

\begin{lemma}\label{central}
Let $A$ be an R$^*$-algebra and $p\in\P(A)$. 
Then $p$ is noncentral if and only if there exists a pair of nonzero projections $p_1, p_2\in \P(A)$ such that $p_1\leq p$, $p_2p=0$ and $p_1\sim p_2$. 
\end{lemma}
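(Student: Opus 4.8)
The plan is to prove both implications, mirroring the structure of the proof of Lemma \ref{p1p2}: just as nonabelianness of $p$ is detected by an off-diagonal element living inside $pAp$, noncentrality of $p$ should be detected by an off-diagonal element that crosses between $p$ and its complement. For the easy direction ($\Leftarrow$), I would start from the given data: nonzero $p_1, p_2 \in \P(A)$ with $p_1 \le p$, $p_2 p = 0$, and a partial isometry $v \in A$ with $v^*v = p_1$, $vv^* = p_2$. Since $v = v(v^*v) = vp_1$ and $p_1 p = p_1$, a one-line computation gives $vp = v$, while $v = (vv^*)v = p_2 v$ together with $p p_2 = 0$ (the adjoint of $p_2 p = 0$) gives $pv = 0$. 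As $v \neq 0$, this shows $pv \neq vp$, so $p$ does not commute with the element $v \in A$ and is therefore noncentral.

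For the forward direction ($\Rightarrow$), I would proceed as follows. First, noncentrality of $p$ yields some $y \in A$ with $py \neq yp$; writing $y = a + ib$ with $a = (y+y^*)/2$ and $b = (y-y^*)/(2i)$ self-adjoint and in $A$, I may assume (replacing $y$ by $a$ or $b$) that there is a self-adjoint $a \in A$ with $pa \neq ap$. Next I set $x := pa - pap \in A$ (note both summands lie in $A$, so $1 - p$ is never needed). I would then check $x \neq 0$: if $x = 0$ then $pa = pap$ is self-adjoint, hence $pa = (pa)^* = ap$, a contradiction. Now $\operatorname{ran} x \subset pH$ forces $l(x) \le p$, and since $x^* = ap - pap$ satisfies $\operatorname{ran} x^* \subset (1-p)H$, we get $r(x) = l(x^*) \le 1 - p$, i.e.\ $r(x) p = 0$. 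Finally, Proposition \ref{operation} (3) guarantees $l(x), r(x) \in A$, and Lemma \ref{lr} gives $l(x) \sim r(x)$. Setting $p_1 := l(x)$ and $p_2 := r(x)$ (both nonzero because $x \neq 0$) produces exactly the required pair.

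The only genuinely delicate point is handling the possibly nonunital case: I must avoid ever treating $1-p$ as an element of $A$. The remedy is exactly the choice $x = pa - pap$ rather than $x = pa(1-p)$; although these coincide formally in $B(H)$, only the former is manifestly a difference of products of elements of $A$ and hence visibly lies in $A$. Everything else (the identities $l(x) \le p$, $r(x) \perp p$, and $l(x) \sim r(x)$) is a formal computation with supports of an operator in $A$, so I expect no real obstacle once $x$ is correctly placed inside $A$ and shown to be nonzero.
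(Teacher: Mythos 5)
Your proof is correct and follows essentially the same route as the paper's: for the forward direction the paper picks a projection $q$ with $pq\neq qp$ (using that $A$ is spanned by projections) and sets $x:=pq-pqp$, then takes $p_1:=l(x)$, $p_2:=r(x)$ with Lemma \ref{lr}, exactly as you do with a self-adjoint $a$ and $x:=pa-pap$ --- a purely cosmetic difference. Your handling of the nonunital issue (writing $x$ as a difference of products rather than using $1-p$) matches the paper's device of avoiding $1-p$, so there is nothing to fix.
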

\begin{proof}
If $p$ is central then for every partial isometry $v$ with $v^*v\leq p$ we have $vv^*=vpv^*=pvv^*p\leq p$, so there is no pair $p_1, p_2$ with the property. 
Assume that $p$ is not central. 
Since $A$ is spanned by projections there is a projection $q\in A$ such that $pq\neq qp$.
It follows that $x:=pq(p\vee q-p)=pq-pqp$ is not self-adjoint and thus nonzero. 
Then $p_1:=l(x)\leq p$ and $p_2:=r(x)\leq p\vee q-p$ satisfy the desired property.
\end{proof}

Recall that a ring $R$ is \emph{Baer} if the right annihilator of every subset of $R$ is generated by an idempotent as a principal right ideal, that is, if for every $X\subset R$ there exists an idempotent $e\in R$ (i.e., $e=e^2$) such that $\{r\in R\mid Xr=0\} = eR$.
A von Neumann algebra is a Baer $^*$-ring.
The structure of Baer $^*$-rings is studied by Kaplansky \cite{Kap} (see also \cite{Be}) as a purely algebraic generalization of von Neumann algebras. 
Which R$^*$-algebras are Baer? 
Below we give the answer (Proposition \ref{baer}). 
Recall that a lattice $\P$ is said to be \emph{complete} if any family $(p_i)_{i\in I}$ of elements in $\P$ has the least upper bound $\bigvee_{i\in I} p_i$ (upward complete) and the greatest lower bound $\bigwedge_{i\in I} p_i$ (downward complete).
First let us recall a fact, which is actually true for general $^*$-regular rings \cite[Proposition 4.1]{Be}, and sketch the proof. 

\begin{lemma}\label{completeness}
For an R$^*$-algebra $A$ the following conditions are equivalent.
\begin{enumerate}
\item $A$ is Baer. 
\item $\P(A)$ is complete.
\item $\P(A)$ is upward complete.
\end{enumerate}
\end{lemma}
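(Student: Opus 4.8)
The plan is to set up a dictionary between principal right ideals, projections, and right annihilators, and then translate the lattice-completeness statements into statements about annihilators. Throughout I will use that $A$ is a $^*$-regular ring (Lemma \ref{starregular}), that for every $x\in A$ the support projections $l(x),r(x)$ lie in $A$ (Proposition \ref{operation}(3)), and that $p\mapsto pA$ is an order isomorphism from $\P(A)$ onto the lattice of principal right ideals (\cite[Theorem II.4.5]{N}). The two computational facts I lean on are: for $p\in\P(A)$ and $a\in A$ one has $a\in pA\iff l(a)\le p$; and for $x\in A$ the right annihilator $\{a\in A\mid xa=0\}$ equals $r(x)^{\perp}A$, since $xa=0$ exactly when $\operatorname{ran}a\subseteq\ker x=(1-r(x))H$, i.e.\ $l(a)\le 1-r(x)$. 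Before anything else I would observe that each of the three conditions forces $A$ to be unital, so that the orthocomplementation $p\mapsto p^{\perp}=1-p$ of $\P(A)$ is available: if $\P(A)$ is upward complete then $e:=\bigvee_{p\in\P(A)}p$ exists, and since $l(x),r(x)\le e$ for every $x\in A$ we get $ex=x=xe$, so $e$ is a unit; that a Baer ring is unital is the standard fact that the right annihilator of $\emptyset$ is all of $A$. From now on I assume $\operatorname{id}_H\in A$.

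For $(1)\Rightarrow(2)$ I would take an arbitrary family $(p_i)_{i\in I}$ in $\P(A)$ and apply the Baer condition to $S=\{p_i\mid i\in I\}$, obtaining $R(S)=eA$ for an idempotent $e$; replacing $e$ by the unique projection generating the same principal right ideal, I may assume $e\in\P(A)$. Unwinding $R(S)=\{a\mid l(a)\le p_i^{\perp}\text{ for all }i\}=\{a\mid l(a)\le e\}$ shows that $e$ is a lower bound of $\{p_i^{\perp}\}$ and dominates every projection lying below all $p_i^{\perp}$, so $e=\bigwedge_i p_i^{\perp}$. Since $p\mapsto p^{\perp}$ is a bijection of $\P(A)$, every family of projections arises as such a $\{p_i^{\perp}\}$, so all greatest lower bounds exist; the De Morgan identity $\bigvee_i q_i=(\bigwedge_i q_i^{\perp})^{\perp}$ then supplies all least upper bounds as well, giving (2).

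The implication $(2)\Rightarrow(3)$ is immediate. For $(3)\Rightarrow(1)$, given any $X\subseteq A$ I would compute $R(X)=\bigcap_{x\in X}r(x)^{\perp}A$ using the dictionary above. Since $A$ is unital and $\P(A)$ is upward complete, $\bigvee_{x}r(x)$ exists, whence $\bigwedge_{x}r(x)^{\perp}=(\bigvee_{x}r(x))^{\perp}$ exists; and the characterization $a\in pA\iff l(a)\le p$ gives $\bigcap_{x}r(x)^{\perp}A=(\bigwedge_{x}r(x)^{\perp})A$, a principal right ideal generated by a projection. Hence $A$ is Baer, closing the cycle $(1)\Rightarrow(2)\Rightarrow(3)\Rightarrow(1)$.

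The step I expect to require the most care is the interplay, in $(1)\Rightarrow(2)$ and $(3)\Rightarrow(1)$, between set-theoretic intersections of infinitely many principal right ideals and lattice meets: one must verify that $\bigcap_i p_iA=(\bigwedge_i p_i)A$ precisely when the meet exists, and that the Baer generator can be taken to be a projection. Both rest on the $^*$-regularity of $A$ and the membership criterion $a\in pA\iff l(a)\le p$, which is exactly where the specific structure of R$^*$-algebras, rather than that of an abstract orthocomplemented lattice, is genuinely used.
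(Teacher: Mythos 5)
Your proof is correct and takes essentially the same route as the paper's: the same dictionary between projections, principal right ideals and right annihilators (with the Baer generator replaced by the projection generating the same ideal), and De Morgan duality to pass between meets and joins. The only differences are cosmetic --- you run the cycle $(1)\Rightarrow(2)\Rightarrow(3)\Rightarrow(1)$ where the paper runs $(1)\Rightarrow(3)\Rightarrow(2)\Rightarrow(1)$, and you spell out the membership criterion $a\in pA\iff l(a)\leq p$ and the unitality arguments that the paper compresses into ``we may check'' and ``it clearly follows''.
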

\begin{proof}
Recall that every principal right ideal of a $^*$-regular ring is generated by a projection. \\
$(1)\Rightarrow(3)$
Assume that $A$ is Baer. 
It clearly follows that $A$ is unital. 
Let $(p_i)_{i\in I}$ be a family of elements in $\P(A)$. 
Let the right annihilator of $(p_i)_{i\in I}$ be equal to $qA$ with $q\in \P(A)$. 
Then we may check that $1-q$ is the least upper bound of $(p_i)_{i\in I}$, hence $\P(A)$ is upward complete.\\
$(3)\Rightarrow(2)$
If $\P(A)$ is upward complete then the least upper bound of $\P(A)$ needs to be a unit of $A$, hence $A$ is unital.
Then $\P(A)$ is downward complete, too.
Indeed, for each family $(p_i)_{i\in I}$ of projections the greatest lower bound is determined by $1- \bigvee_{i\in I} (1-p_i)$.\\
$(2)\Rightarrow(1)$ For a subset $X\subset R$, the right annihilator of $X$ is the principal right ideal generated by $\bigwedge_{x\in X}(1-r(x))$.
\end{proof}

It is also common to consider completeness with respect to countably infinite collections. 
A lattice $\P$ is said to be \emph{$\sigma$-complete}  if any countable family $(p_i)_{i\in I}$ of $\P$ has the least upper bound $\bigvee_{i\in I} p_i$ (upward $\sigma$-complete) and the greatest lower bound $\bigwedge_{i\in I} p_i$ (downward $\sigma$-complete).
Recall that these concepts are important in measure theory. 
A $\sigma$-field is a $\sigma$-complete Boolean algebra, and for a measure space $(X, \mathcal{F}, \mu)$ the collection $\{A\in \mathcal{F} \mid \mu(A)<\infty\}$ forms a downward $\sigma$-complete generalized Boolean algebra.
Notice that a $\sigma$-complete generalized Boolean algebra need not be Boolean. 
For example, for an uncountable set $X$, consider the generalized field of sets formed of all (at most) countable subsets of $X$. 
It should be also mentioned that a $\sigma$-complete Boolean algebra need not be orthoisomorphic to a $\sigma$-field (see \cite[Chapter 40]{GH}). 
Let us characterize R$^*$-algebras whose lattice of projections are upward/downward $\sigma$-complete. 

\begin{proposition}\label{sigma}
For an R$^*$-algebra $A$ the following conditions are equivalent.
\begin{enumerate}
\item $\P(A)$ is $\sigma$-complete.
\item $\P(A)$ is upward $\sigma$-complete.
\item There exist a $\sigma$-complete generalized Boolean algebra $\S$ and a finite-dimensional R$^*$-algebra $B$ such that $A$ is $^*$-isomorphic to $R(\S)\oplus B$.
\end{enumerate}
\end{proposition}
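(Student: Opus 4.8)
The plan is to prove the cycle $(3)\Rightarrow(1)\Rightarrow(2)\Rightarrow(3)$, with only the last implication requiring real work. For $(3)\Rightarrow(1)$, note that a direct sum of two $^*$-algebras has projection lattice equal to the product of the two projection lattices, so $\P(R(\S)\oplus B)\cong \S\times\P(B)$; here $\S$ is $\sigma$-complete by hypothesis, and $\P(B)$ is the projection lattice of a finite-dimensional algebra $B\cong\bigoplus_i\M_{n_i}$, hence a finite product of (complete) subspace lattices and therefore complete, so the product is $\sigma$-complete. The implication $(1)\Rightarrow(2)$ is immediate from the definitions.

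For $(2)\Rightarrow(3)$ the heart is a finiteness lemma: \emph{if $\P(A)$ is upward $\sigma$-complete then $A$ has no infinite family of pairwise orthogonal nonabelian projections.} I would argue this by contradiction, manufacturing a self-adjoint element of $A$ with infinite spectrum (forbidden by Theorem \ref{equi}). Given pairwise orthogonal nonabelian $p_1,p_2,\dots$, Lemma \ref{p1p2} supplies, inside each $p_nAp_n$, a pair of nonzero orthogonal equivalent subprojections $a_n\sim b_n$ with implementing partial isometry $w_n\in A$ (so $w_n^*w_n=a_n$, $w_nw_n^*=b_n$). For a phase $\theta$ set $f_n^{(\theta)}:=\tfrac12(a_n+b_n+e^{i\theta}w_n^*+e^{-i\theta}w_n)$, which one checks is a projection dominated by $a_n+b_n$, and the $f_n^{(\theta)}$ are pairwise orthogonal across $n$. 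Choosing phases $\theta_n$ so that the numbers $\cos^2(\theta_n/2)$ are pairwise distinct, I would use $(2)$ to form $P:=\bigvee_n f_n^{(\theta_n)}$ and $P_0:=\bigvee_n f_n^{(0)}$ in $\P(A)$, and consider $Y:=PP_0P\in A$. Evaluating block by block, using $f^{(\theta)}f^{(0)}f^{(\theta)}=\cos^2(\theta/2)\,f^{(\theta)}$ for the rank-one projections in each two-dimensional block, $Y$ should act as $\sum_n\cos^2(\theta_n/2)f_n^{(\theta_n)}$, whose spectrum is $\overline{\{\cos^2(\theta_n/2)\}}\cup\{0\}$ and hence infinite, the desired contradiction.

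Granting the lemma, the decomposition is obtained as follows. A maximal family of pairwise orthogonal nonabelian projections is now finite, say $p_1,\dots,p_m$, and every projection orthogonal to $e:=p_1\vee\cdots\vee p_m$ is abelian. I would then produce a central projection $z\ge e$ in $\P(A)$ with $zA$ finite-dimensional and $(1-z)A$ commutative (Lemma \ref{central} is the tool that detects when a projection can be separated centrally), so that $A\cong B\oplus C$ with $B:=zA$ finite-dimensional and $C:=(1-z)A$ commutative. By Theorem \ref{boole}, $\P(C)=\S$ is a generalized Boolean algebra, and it inherits upward $\sigma$-completeness from $\P(A)\cong\S\times\P(B)$. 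Finally, upward $\sigma$-completeness of a generalized Boolean algebra forces full $\sigma$-completeness: for a countable family $\{a_n\}$ the finite meets $b_N:=a_1\wedge\cdots\wedge a_N$ decrease, the relative complements $a_1-b_N$ increase and have a join $c\le a_1$ by upward $\sigma$-completeness, whence $a_1-c=\bigwedge_n a_n$ exists. Thus $C\cong R(\S)$ with $\S$ a $\sigma$-complete generalized Boolean algebra, giving $(3)$.

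The main obstacle will be the block-by-block evaluation of $Y$ in the finiteness lemma, which is exactly where the absence of norm-completeness bites: the supremum $\bigvee_n f_n^{(\theta_n)}$ computed in the lattice $\P(A)$ need not coincide with the strong-operator sum $\sum_n f_n^{(\theta_n)}$, since the latter may fail to lie in $A$, in which case $P$ carries an extra ``excess'' projection orthogonal to every $f_n^{(\theta_n)}$. Reconciling the order-theoretic supremum with the analytic sum — equivalently, proving that under $(2)$ the strong-operator sum of a countable orthogonal family of projections already belongs to $A$ — is the crux; I expect to establish it by showing that any nonzero excess would contain a nonzero projection of $A$ lying strictly below $P$, contradicting the minimality of the supremum. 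The secondary difficulty is the central splitting in the possibly nonunital setting, which I would handle by passing to the unitization and using that, once the nonabelian part $B$ is finite-dimensional, its identity is a genuine projection of $A$.
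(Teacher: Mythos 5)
Your overall architecture matches the paper's: the only hard implication is $(2)\Rightarrow(3)$, proved by first showing that upward $\sigma$-completeness forbids an infinite orthogonal family of nonabelian projections (the paper manufactures, from $2\times 2$ matrix units in each block, tilted projections with overlaps $1-n^{-1}$, forms two suprema $q,r$ and shows $q+r$ has infinite spectrum --- essentially your $Y=PP_0P$ device with phases). Your flagged ``crux'' is real, but your proposed resolution is both stronger than needed and unjustified: nothing in $(2)$ forces the lattice supremum $P=\bigvee_n f_n^{(\theta_n)}$ to coincide with the strong-operator sum, and I see no way to prove that the excess contains a nonzero projection of $A$. The correct fix is a two-line lattice argument: since $f_m\le p_m$ and the $p_n$ are pairwise orthogonal, the projection $f_n\vee\bigl(P\wedge p_n^{\perp}\bigr)$ (meet taken in the unitization; it lies in $A$ because it sits under $P$) is an upper bound for all the $f_m$ and is $\le P$, hence equals $P$; as the two joinands are orthogonal, $Pp_n=f_n^{(\theta_n)}$, and likewise $P_0p_n=f_n^{(0)}$. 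So $P$ and $P_0$ commute with every $p_n$, the excess is merely orthogonal to each block (it need not vanish), and your blockwise evaluation of $Y$ on the invariant subspaces $p_nH$ goes through, giving $\cos^2(\theta_n/2)\in\sigma(Y)$ for all $n$. This is exactly what underlies the paper's terse assertions $p_nqp_n=q_n$ and $p_nrp_n=f_n$.

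The genuine gap is in your decomposition step. You take $e$ to be the join of a (necessarily finite) maximal orthogonal family of nonabelian projections and then assert a central $z\ge e$ with $zA$ finite-dimensional; this is false as stated, because members of a maximal family need not lie in the finite-dimensional summand. Concretely, in $A=\M_2\oplus R(\S)$ with $\S$ an infinite $\sigma$-complete Boolean algebra, the single projection $e=1_{\M_2}\oplus 1$ is a maximal orthogonal family of nonabelian projections, yet the only central $z\ge e$ is $1$, and $zA=A$ is infinite-dimensional. One must instead \emph{shrink} below such projections, and this is where the bulk of the paper's proof lives: it first shows every projection is a finite orthogonal sum of abelian projections (your maximal-family observation gives only that $(1-e)A(1-e)$ is commutative, which is weaker), then, inside each abelian projection $p$, extracts $f(p)\le p$ equal to a finite sum of \emph{non-central atoms} with $p-f(p)$ central --- via a maximal family of subprojections $p_i\le p$ equivalent to orthogonal copies $q_i$, whose finiteness comes from pairing them into nonabelian projections $p_i+q_i$ and invoking the finiteness claim, with centrality of the remainder from Lemma \ref{central} --- and finally assembles the atomic summand and proves it finite-dimensional using Theorem \ref{vi} and the claim once more. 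None of this is supplied by your appeal to Lemma \ref{central}, so the passage from the finiteness lemma to the splitting $A\cong R(\S)\oplus B$ is missing. Your remaining ingredients are fine and match or complement the paper: $(3)\Rightarrow(1)$ via the product lattice, and your explicit argument that upward $\sigma$-completeness of a generalized Boolean algebra implies downward $\sigma$-completeness (via relative complements $a_1-b_N$) is a point the paper leaves implicit.
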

\begin{proof}
$(3)\Rightarrow(1)$ Easy. $(1)\Rightarrow(2)$ This is trivial.\\ 
$(2)\Rightarrow(3)$ Assume $\P(A)$ is upward $\sigma$-complete. 
First we claim that there exists no sequence $(p_n)_{n\geq 1}$ of pairwise orthogonal nonzero nonabelian projections in $A$. 
Indeed, if there exists such a sequence, then Lemma \ref{p1p2} implies that for each $n\geq 1$ there exist mutually equivalent nonzero subprojections $q_n, r_n\leq p_n$ with $q_nr_n=0$. 
Take a partial isometry $v_n\in A$ with $v_n^*v_n=q_n, v_nv_n^*=r_n$. 
Note that $\{q_n, v_n, v_n^*, r_n\}$ forms a system of $2\times 2$ matrix units.
Set $q=\bigvee_{n\geq 1}q_n$ and 
\[
r=\bigvee_{n\geq 1} \left((1-n^{-1})q_n + \sqrt{(1-n^{-1})n^{-1}}(v_n+v_n^*) + n^{-1}r_n\right)
\]
Then we have $p_nqp_n = q_n$ and 
\[
p_nrp_n=(1-n^{-1})q_n + \sqrt{(1-n^{-1})n^{-1}}(v_n+v_n^*) + n^{-1}r_n
\]
for each $n\geq 1$. 
From this{,} it is not difficult to see that the sum $q+r$ has infinite spectrum, which contradicts Theorem \ref{equi}.
Thus we get the desired claim.

Consider the collection $S$ of all projections in $\P(A)$ that cannot be written as a sum of finitely many mutually orthogonal abelian projections. 
We prove that for each $p\in S$ there exists a subprojection $e$ of $p$ such that $e\in S$ and $p-e$ is nonabelian.
Let $p\in S$. 
Since $p$ is clearly nonabelian, there is a pair of subprojections $p', p''\in \P(A)$ of $p$ such that $p'p''=0$ and $p'\sim p''$. 
If $p'$ is not abelian, then it is clear that both $p'$ and $p-p'$ are nonabelian and at least one of $p', p-p'$ is in $S$, which leads to the desired conclusion.
If $p'$ is abelian, then it is clear that $p'+p''$ is nonabelian and does not belong to $S$, and hence $p-(p'+p'')$ is in $S$, which also leads to the desired conclusion.
Therefore, if $S\neq \emptyset$, then we may inductively take a sequence of mutually orthogonal nonzero nonabelian projections in $A$, which contradicts the above claim. 
It follows that $S=\emptyset$.
 
Let $p\in \P(A)$ be an abelian projection.
We prove that  there exists a subprojection $f(p)\leq p$ such that 
\begin{enumerate}
\item[(a)] there exists a collection $e_1, e_2, \ldots, e_n$ of mutually orthogonal atoms in $A$, none of which is central in $A$, such that $f(p)=e_1+\cdots+e_n$, and  
\item[(b)] $p-f(p)$ is a central abelian projection in $A$.
\end{enumerate}
To prove it, take a maximal family $(p_i)_{i\in I}$ of pairwise orthogonal nonzero subprojections of $p$ with the following property: 
For each $i\in I$ there exists a projection $q_i\in \P(A)$ such that $p_iq_i=0$ and $q_i\sim p_i$
(the existence of a maximal family with this property is ensured by Zorn's lemma). 
Then $(q_i)_{i\in I}$ is pairwise orthogonal. 
Indeed, if $q_iq_j\neq 0$ for some $i\neq j\in I$ then $l(q_iq_j)\,\, (\leq q_i)$ is equivalent to $r(q_iq_j)\,\, (\leq q_j)$, which must imply that some nonzero subprojection of $p_i$ is equivalent to a subprojection of $p_j$, and this contradicts the commutativity of $pAp$.
Similarly, we have $p_iq_j=0$ if $i\neq j\in I$.
It follows that $(p_i+q_i)_{i\in I}$ is a family of pairwise orthogonal nonzero nonabelian projections in $A$, which forces $I$ to be a finite set. 
Set $f(p):= \sum_{i\in I}p_i$.
The above reasoning also shows that there is $q\in \P(A)$ with $f(p)\sim q$ and $f(p)q=0$.
Take a partial isometry $v$ such that $v^*v=f(p)$ and $vv^*=q$.
Assume that $f(p)Af(p)$ is infinite-dimensional. 
Proposition \ref{descend} implies that there is a sequence of pairwise orthogonal nonzero subprojections $(p_n)_{n\geq 1}$ of $f(p)$. 
It follows that the sequence $(p_n+ vp_nv^*)_{n\geq 1}$ is a pairwise orthogonal sequence of nonzero nonabelian projections, which contradicts the above claim. 
Therefore, we see that $f(p)Af(p)$ is finite-dimensional.
It is now clear that $f(p)$ is a sum of finitely many pairwise orthogonal atoms none of which is central in $A$.
By Lemma \ref{central} combined with the maximality of the family $(p_i)_{i\in I}$, $p-f(p)$ is central in $A$. 

Let $p\in \P(A)$. 
Then $p$ is the sum of some finitely many pairwise orthogonal abelian projections. 
It follows from the preceding paragraph that there is a subprojection $f(p)$ of $p$ with (a) and (b).
A moment's reflection shows that such an $f(p)$ is unique and that $A$ can be decomposed into the direct sum of two unions of increasing nets of R$^*$-algebras $(f(p)Af(p))_{p\in \P(A)}$ and $((p-f(p))A(p-f(p)))_{p\in \P(A)}$. 
Clearly, the directed union of the family $(f(p)Af(p))_{p\in \P(A)}$ is purely atomic and that of $((p-f(p))A(p-f(p)))_{p\in \P(A)}$ is abelian.
It follows that there exist a $\sigma$-complete generalized Boolean algebra $\S$ and a purely atomic R$^*$-algebra $B$, none of whose atoms is central, such that $A$ is $^*$-isomorphic to $R(\S)\oplus B$.

Lastly, let us prove that $B$ is actually finite-dimensional. 
By Theorem \ref{vi} $B$ is $^*$-isomorphic to $\bigoplus_{j\in J}F(V_j)$ for some family of inner product spaces $(V_j)_{j\in J}$. 
Since none of the atoms of $B$ is abelian, none of $(V_j)_{j\in J}$ is one-dimensional. 
Assume that $B$ is infinite-dimensional. 
Then it is an easy exercise to find an infinite family of pairwise orthogonal nonzero nonabelian projections in $B$, which contradicts the above claim. 
This means that $B$ is finite-dimensional, which completes the proof. 
\end{proof}

\begin{proposition}
For an R$^*$-algebra $A$, $\P(A)$ is downward $\sigma$-complete if and only if there exist a downward $\sigma$-complete generalized Boolean algebra $\S$ and a purely atomic R$^*$-algebra $B$ such that $A$ is $^*$-isomorphic to $R(\S)\oplus B$.
\end{proposition}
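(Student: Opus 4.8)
The plan is to prove both implications, handling the reverse one as a warm-up and then adapting the proof of Proposition \ref{sigma} for the forward one.

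For the reverse implication, suppose $A\cong R(\S)\oplus B$ with $\S$ downward $\sigma$-complete and $B$ purely atomic. Since projections of a direct sum decompose coordinatewise, $\P(A)$ is lattice isomorphic to the product $\S\times\P(B)$, in which greatest lower bounds are computed coordinatewise; so it suffices to see $\P(B)$ is downward $\sigma$-complete. But a purely atomic R$^*$-algebra satisfies the descending chain condition on projections (by the characterization of purely atomic R$^*$-algebras proved above), so every decreasing sequence of projections in $B$ stabilizes. Applying this to the decreasing sequence of finite meets $p_1\wedge\cdots\wedge p_N$ shows that every countable family $(p_n)$ in $\P(B)$ has a greatest lower bound. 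Hence $\P(B)$, and therefore $\P(A)\cong\S\times\P(B)$, is downward $\sigma$-complete.

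For the forward implication I would follow the template of the proof of Proposition \ref{sigma}, the essential new tool being a localization principle. For every $E\in\P(A)$ the interval $[0,E]=\P(EAE)$ is downward $\sigma$-complete (the $\P(A)$-greatest lower bound of a decreasing sequence below $E$ again lies below $E$), and since $EAE$ is unital with unit $E$ the map $x\mapsto E-x$ is an order-reversing involution of $[0,E]$; by De~Morgan this promotes downward to upward $\sigma$-completeness inside $[0,E]$, so every bounded countable orthogonal family of projections has a join in $\P(A)$. Feeding these joins into the infinite-spectrum construction of the claim in Proposition \ref{sigma} yields a \emph{local} version of that claim: no interval $[0,E]$ contains an infinite sequence of pairwise orthogonal nonzero nonabelian projections. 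This is strictly weaker than the global statement used in the upward case—unbounded such families, as in $\M_\infty$, are not excluded—which is precisely why the atomic summand will now be merely purely atomic rather than finite-dimensional. With this in hand I would first show $S=\emptyset$, where $S$ is the set of projections that are not finite sums of abelian projections: a projection in $S$ produces, by the iteration in Proposition \ref{sigma}, a decreasing chain $p=p_0\ge p_1\ge\cdots$ whose differences $r_n=p_{n-1}-p_n$ are nonabelian and lie below $p$, and the local claim with $E=p$ rules this out. Then I would run the function $f$ of Proposition \ref{sigma}, splitting each abelian $p$ as $f(p)+(p-f(p))$ with $f(p)$ built from non-central atoms and $p-f(p)$ central (note $f(p)Af(p)=f(p)(pAp)f(p)$ is commutative, as $f(p)\le p$ and $p$ is abelian); the directed unions of $\bigl(f(p)Af(p)\bigr)_{p}$ and $\bigl((p-f(p))A(p-f(p))\bigr)_{p}$ give a purely atomic $B$ and a commutative $R(\S)$ with $A\cong R(\S)\oplus B$, after which $\S$ inherits downward $\sigma$-completeness from $\P(A)=\S\times\P(B)$.

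The hard part will be the finiteness needed to make $B$ genuinely purely atomic, namely that each $f(p)$ is a \emph{finite} sum of non-central atoms, equivalently that $[0,p]$ contains only finitely many pairwise orthogonal non-central atoms (an infinite such family would make $f(p)Bf(p)$ infinite-dimensional, so $B$ would fail to be purely atomic). The obstruction is that the partial isometries witnessing non-centrality carry the relevant orthogonal nonabelian family $p_i+q_i$ partly outside $p$, and their external halves $q_i$ are not a priori bounded, so the local claim does not apply directly. I expect to resolve this by corralling the $q_i$ into a single interval $[0,E]$—choosing each $q_i$ below the central support of $p_i$, or passing to the $\sigma$-weak closure as in the proof of Theorem \ref{vi} to locate the equivalences inside one bounded block—at which point $(p_i+q_i)$ becomes bounded and the local claim forces the family to be finite.
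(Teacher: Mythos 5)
Your reverse implication is correct and complete: by the Artinian-type characterization, a purely atomic $B$ satisfies the descending chain condition on projections, so the decreasing sequence of finite meets stabilizes, and meets in $\P(R(\S)\oplus B)\cong \S\times\P(B)$ are computed coordinatewise. For the forward direction, your localization is exactly the paper's opening observation (each $\P(pAp)$ is upward $\sigma$-complete via $p-\bigwedge_i(p-p_i)$), and your local claim together with the $S=\emptyset$ step is sound, since the orthogonal nonabelian family produced there stays below the fixed projection $p$.

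However, the ``hard part'' you flag --- that the family $(p_i+q_i)$ witnessing non-centrality escapes every interval $[0,E]$ --- is not merely hard: no corralling can succeed, because the forward implication is false as stated. Let $H=\ell_2(\N)\oplus\ell_2(\N)$, let $e_n$, $q_n$ be the rank-one projections onto the $n$-th basis vectors of the first and second summands, and let $v_n$ be the partial isometry with $v_n^*v_n=e_n$, $v_nv_n^*=q_n$, so that $\{e_n,v_n,v_n^*,q_n\}$ spans the $n$-th $2\times 2$ block. Put $M:=\bigoplus_{n\in\N}\M_2$ (finitely supported block matrices) and let $D$ consist of the operators $d_c:=\sum_n c_ne_n$ with $c\colon\N\to\C$ of finite range, so $D\cong R(2^{\N})$. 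From $d_ce_n=c_ne_n$, $d_cq_n=d_cv_n=0$, $v_nd_c=c_nv_n$, $d_cv_n^*=c_nv_n^*$, $v_n^*d_c=0$ one checks that $A:=D+M$ is a $^*$-subalgebra of $B(H)$, and every self-adjoint element of $A$ is block-diagonal with $n$-th block equal to $c_ne_n$ outside a finite set, hence has finite spectrum; so $A$ is an R$^*$-algebra. Its projections are exactly the block-diagonal projections whose $n$-th block lies in $\{0,e_n\}$ for all but finitely many $n$, with arbitrary $0$/$e_n$ pattern. Along a decreasing sequence each block stabilizes, the blockwise limit is again a projection of $A$, and it is the infimum; since finite meets always exist, $\P(A)$ is downward $\sigma$-complete (indeed downward complete). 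Yet $A$ is not $^*$-isomorphic to any $R(\S)\oplus B$ with $B$ purely atomic: each $e_n$ is an atom and is non-central ($e_nv_n^*=v_n^*\neq 0=v_n^*e_n$), so under any such isomorphism $e_n$ maps to an atom $(0,b_n)$ of the $B$-summand (atoms of the $R(\S)$-summand are central); the $b_n$ are pairwise orthogonal and dominated by the second component $P_2$ of the image of $P:=\sum_n e_n\in D$, so infinitely many orthogonal nonzero projections lie below $P_2$, contradicting the finite-dimensionality of $P_2BP_2$.

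Note that in this $A$ the abelian projection $P$ dominates infinitely many orthogonal non-central atoms, which is precisely the configuration your finiteness step (and clause (a) of Proposition \ref{sigma}) was supposed to exclude; and both of your proposed repairs fail here concretely: the central support of $e_n$ is $e_n+q_n$, so corralling $q_n$ below central supports leaves the family $(e_n+q_n)$ an infinite orthogonal family of nonabelian projections with no upper bound in $\P(A)$, while the joins available in the $\sigma$-weak closure lie outside $A$. To be fair to you, the paper's own proof is defective at the identical point: the claim that the proof of Proposition \ref{sigma} applies ``verbatim'' silently invokes the global no-infinite-orthogonal-nonabelian-family claim, which in the downward setting is only available locally --- your analysis isolates exactly the step where the published argument breaks down, and the example above shows it cannot be patched.
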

\begin{proof}
Assume that $\P(A)$ is downward $\sigma$-complete.
Let $p\in \P(A)$. 
Then $pAp$ is upward $\sigma$-complete. 
Indeed, for a countable family of projections $(p_i)_{i\in I}$ in $pAp$, its least upper bound is equal to $p-\bigwedge_{i\in I}(p-p_i)$. 
Thus we may apply the proof of the preceding theorem verbatim to show that $A$ can be decomposed into the direct sum of two unions of increasing nets of R$^*$-algebras $(f(p)Af(p))_{p\in \P(A)}$ and $((p-f(p))A(p-f(p)))_{p\in \P(A)}$, where $f(p)\leq p$ satisfies (a) and (b). 
It follows that there exist a downward $\sigma$-complete generalized Boolean algebra $\S$ and a purely atomic R$^*$-algebra $B$ (none of whose atom is central) such that $A$ is $^*$-isomorphic to $R(\S)\oplus B$.
It is easy to prove the converse implication.
\end{proof}

The characterization of Baer R$^*$-algebras is a special case of Proposition \ref{sigma}.

\begin{proposition}\label{baer}
Let $A$ be an R$^*$-algebra.
Then $A$ is Baer if and only if there exist a complete Boolean algebra $\S$ and a finite-dimensional R$^*$-algebra $B$ such that $A$ is $^*$-isomorphic to $R(\S)\oplus B$.
\end{proposition}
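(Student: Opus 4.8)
The plan is to read off the result from two facts already established. By Lemma \ref{completeness}, $A$ is Baer exactly when $\P(A)$ is complete, and by Proposition \ref{sigma} a $\sigma$-complete projection lattice already forces a decomposition $A\cong R(\S_0)\oplus B$ with $\S_0$ a $\sigma$-complete generalized Boolean algebra and $B$ finite-dimensional. Since completeness trivially implies $\sigma$-completeness, the entire task is to \emph{upgrade} the output of Proposition \ref{sigma} from the $\sigma$-complete, generalized level to the complete, genuinely Boolean level, exploiting the extra strength of the Baer hypothesis.

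For the forward direction I would take a Baer R$^*$-algebra $A$. By Lemma \ref{completeness} its lattice $\P(A)$ is complete, hence $\sigma$-complete, so Proposition \ref{sigma} provides a $^*$-isomorphism $A\cong R(\S_0)\oplus B$ with $\S_0$ $\sigma$-complete generalized Boolean and $B$ finite-dimensional. Two improvements remain. First, the proof of Lemma \ref{completeness} shows that a Baer R$^*$-algebra is unital; as a two-term direct sum is unital precisely when both summands are, and $B$ (being finite-dimensional) is unital, the summand $R(\S_0)$ is unital, and Theorem \ref{boole} then forces $\S_0$ to be a Boolean algebra. Second, a $^*$-isomorphism induces a lattice isomorphism of projection lattices, so $\P(A)\cong \P(R(\S_0))\times\P(B)=\S_0\times\P(B)$; since $\P(A)$ is complete and completeness of a product lattice passes to each factor (suprema and infima in a product being computed coordinatewise), $\S_0$ is complete. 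Setting $\S:=\S_0$ gives the desired complete Boolean algebra.

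For the converse, suppose $A\cong R(\S)\oplus B$ with $\S$ a complete Boolean algebra and $B$ finite-dimensional. Then $\P(A)\cong \S\times\P(B)$, where $\S$ is complete by hypothesis and $\P(B)$ is complete because $B$ is a finite-dimensional C$^*$-algebra, so that $\P(B)$ is a finite product of subspace lattices of finite-dimensional Hilbert spaces, each of which is complete. A product of complete lattices is complete, so $\P(A)$ is complete and $A$ is Baer by Lemma \ref{completeness}.

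The only point requiring genuine (if brief) care is the transfer of completeness across the direct sum: one must check that completeness of the product lattice $\S_0\times\P(B)$ descends to the factor $\S_0$, and, symmetrically, that $\P(B)$ is itself complete so that the converse closes. Both reduce to the coordinatewise computation of suprema and infima in a product lattice, and everything else is a direct specialization of Proposition \ref{sigma} together with the unitality of Baer R$^*$-algebras.
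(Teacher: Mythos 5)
Your proof is correct and takes essentially the same approach as the paper, whose entire proof of this proposition reads ``Use Lemma \ref{completeness} and apply Proposition \ref{sigma}. (Beware of the fact that a Baer ring is unital.)''---you have simply made explicit the details that the paper leaves to the reader. In particular, your use of unitality to upgrade $\S_0$ from a generalized Boolean algebra to a Boolean algebra via Theorem \ref{boole}, and the coordinatewise transfer of completeness across $\P(A)\cong\S_0\times\P(B)$, are precisely the points the paper's parenthetical warning gestures at, and both are handled correctly.
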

\begin{proof}
Use Lemma \ref{completeness} and apply  Proposition \ref{sigma}. 
(Beware of the fact that a Baer ring is unital.)
\end{proof}

\begin{proposition}
For an R$^*$-algebra $A$, $\P(A)$ is downward complete if and only if there exist a downward complete generalized Boolean algebra $\S$ and a purely atomic R$^*$-algebra $B$ such that $A$ is $^*$-isomorphic to $R(\S)\oplus B$.
\end{proposition}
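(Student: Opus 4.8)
The plan is to prove both implications, treating the (easier) ``if'' direction by a direct lattice computation and the ``only if'' direction by running the argument of the preceding two propositions with ``complete'' in place of ``$\sigma$-complete'' throughout.

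For the ``if'' direction, suppose $A\cong R(\S)\oplus B$ with $\S$ a downward complete generalized Boolean algebra and $B$ purely atomic. Since $A$ is a direct sum, its projection lattice is the product lattice $\P(A)\cong\P(R(\S))\times\P(B)=\S\times\P(B)$, and greatest lower bounds are computed coordinatewise; so it suffices to see that each factor is downward complete. The factor $\S$ is downward complete by hypothesis. For $\P(B)$ I would invoke the earlier characterization that a purely atomic R$^*$-algebra is exactly one whose projection lattice satisfies the descending chain condition. In a lattice with the descending chain condition, every nonempty family $(q_i)_i$ has a greatest lower bound: the family of finite meets $\bigwedge_{i\in F}q_i$ ($F$ finite) is downward directed, hence by the descending chain condition has a minimum, which is readily checked to equal $\bigwedge_i q_i$. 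Thus $\P(B)$ is downward complete, and therefore so is the product $\P(A)$.

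For the ``only if'' direction, assume $\P(A)$ is downward complete. First I would localize: for each $p\in\P(A)$ and each family $(p_i)_i$ of projections below $p$, the family $(p-p_i)_i$ has a greatest lower bound in $\P(A)$, which lies below $p$, and De Morgan's law inside the unital corner $pAp$ gives $\bigvee_i p_i=p-\bigwedge_i(p-p_i)$. Hence every corner $pAp$ is upward complete, so by Lemma \ref{completeness} it is Baer, and by Proposition \ref{baer} it is $^*$-isomorphic to $R(\S_p)\oplus B_p$ with $\S_p$ a complete Boolean algebra and $B_p$ finite-dimensional. In particular each corner has a finite-dimensional nonabelian part, and its unit is a finite sum of pairwise orthogonal abelian projections. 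With this local structure in hand, I would repeat the decomposition in the proof of Proposition \ref{sigma} essentially verbatim: for each abelian projection $p$ produce a subprojection $f(p)\leq p$ satisfying (a) $f(p)$ is a finite sum of noncentral atoms and (b) $p-f(p)$ is central abelian, then assemble the increasing nets $(f(p)Af(p))_p$ and $((p-f(p))A(p-f(p)))_p$ into a direct sum $A\cong R(\S)\oplus B$ with $B$ purely atomic (none of whose atoms is central) and, by Theorem \ref{boole}, $\S$ a generalized Boolean algebra. Finally I would check that $\S=\P(R(\S))$, being a direct factor of the product lattice $\P(A)$ (with the zero projection supplying the other coordinate), inherits downward completeness from $\P(A)$, which yields the stated form of the decomposition.

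The main obstacle is that the ``no infinite orthogonal family of nonabelian projections'' claim powering the proof of Proposition \ref{sigma} is genuinely false here, since an infinite purely atomic $B$ does contain such families; so the global finiteness conclusion of that proof must be dropped, and all its finiteness steps (forcing index sets finite, finite-dimensionality of $f(p)Af(p)$, and so on) must instead be carried out inside the Baer corners $pAp$, where upward completeness makes the requisite joins exist and finiteness holds because $B_p$ is finite-dimensional. Verifying that these localized arguments glue into a global decomposition — and that downward completeness of $\P(A)$ descends to the central summand $\S$ while $B$ is only required to be purely atomic rather than finite-dimensional — is the delicate part of the argument.
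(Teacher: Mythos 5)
Your ``if'' direction is correct and complete (coordinatewise meets in the product lattice, plus the observation that pure atomicity is equivalent to the descending chain condition, which yields arbitrary meets via minimal finite meets); this is in fact more detail than the paper supplies, since the paper states this proposition with no proof at all, implicitly deferring to the ``verbatim'' argument of its downward $\sigma$-complete sibling. Your diagnosis of that argument is also accurate: the global claim ``no infinite orthogonal family of nonabelian projections'' from the proof of Proposition \ref{sigma} genuinely fails under downward completeness, and your De Morgan localization showing each corner $pAp$ is upward complete, hence Baer of the form $R(\S_p)\oplus B_p$ with $B_p$ finite-dimensional, is fine.

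However, the ``delicate part'' you defer is not a verification but a genuine, and as far as I can see unfixable, gap. The one finiteness step that cannot be localized is exactly the one you need: for an \emph{abelian} projection $p$, the corner $pAp$ is commutative, so $B_p=0$ gives no information, and the orthogonal family $(p_i+q_i)$ controlling $f(p)$ involves partners $q_i$ outside $pAp$ that need not be dominated by any projection of $A$, hence sit inside no corner. Concretely, let $C\subset B(\ell_2\otimes\C^2)$ consist of all sequences $x\colon\N\to\M_2$ such that $x_n=\lambda_nE_{11}$ for all but finitely many $n$, with $(\lambda_n)$ of finite range. Every corner of $C$ is of the Baer form of Proposition \ref{baer}, and $\P(C)$ is downward complete: the slotwise meet of any family again lies in $\P(C)$, because a slot at which the meet is neither $0$ nor $E_{11}$ forces every member of the family to take a value outside $\{0,E_{11}\}$ there, hence lies among the finitely many exceptional slots of any single member. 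Yet for the abelian projection $p$ with value $E_{11}$ at every slot, the maximal partnered family $p_k:=E_{11}\chi_{\{k\}}$, $q_k:=E_{22}\chi_{\{k\}}$, $v_k:=E_{21}\chi_{\{k\}}$ is infinite, and no $f(p)$ with properties (a), (b) exists, since the only central projections of $C$ are finite sums of the slot units $z_k:=(E_{11}+E_{22})\chi_{\{k\}}$, none of which lies below $p$. Worse, $C$ is not $^*$-isomorphic to any $R(\S)\oplus B$ with $B$ purely atomic: each $z_k$ is central with $z_kCz_k\cong\M_2$ directly indecomposable, which forces the image of $z_k$ into the $B$-summand under any such isomorphism, and then the image $(s,b)$ of $p$ satisfies $bb_k\neq 0$ for infinitely many orthogonal $b_k$, so $bBb$ is infinite-dimensional, contradicting pure atomicity. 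So, unless I am misreading the definitions, $C$ refutes the ``only if'' direction as stated (and the downward $\sigma$-complete proposition it imitates); at the very least it shows your localized claim that ``finiteness holds because $B_p$ is finite-dimensional'' is false, so the proposed repair cannot close the gap you yourself identified.
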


Recall that a complete complemented modular lattice $\P$ is called a (possibly reducible) \emph{continuous geometry} if it satisfies 
\[
\left(\bigvee_{i\in I} p_i\right) \wedge q = \bigvee_{i\in I} (p_i \wedge q)
\]
and 
\[
\left(\bigwedge_{i\in I} p_i\right) \vee q = \bigwedge_{i\in I} (p_i \vee q)
\]
for any linearly ordered family $(p_i)_{i\in I}$ of elements in $\P$ and $q\in \P$. 
Von Neumann studied continuous geometries in \cite[Parts I and III]{N}. 
Kaplansky \cite{Kap2} proved that every orthocomplemented complete modular lattice is actually a continuous geometry. 
Thus we obtain:

\begin{corollary}
Let $A$ be an R$^*$-algebra. 
The projection lattice $\P(A)$ is a continuous geometry if and only if the equivalent conditions of Proposition \ref{baer} hold.
\end{corollary}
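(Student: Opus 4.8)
The plan is to deduce both directions almost immediately from Lemma \ref{completeness} and Kaplansky's theorem \cite{Kap2}, the crucial observation being that the projection lattice of a \emph{unital} R$^*$-algebra carries the orthocomplementation $p\mapsto 1-p$ discussed in Section \ref{com}. No new lattice-theoretic computation is needed; the work consists entirely of assembling facts already available.

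For the forward implication, I would start from the \emph{definition} of a continuous geometry, which builds in completeness of the lattice. Hence if $\P(A)$ is a continuous geometry, then in particular $\P(A)$ is complete, and Lemma \ref{completeness} gives at once that $A$ is Baer, i.e.\ the equivalent conditions of Proposition \ref{baer} hold. This direction is therefore purely a matter of reading off the definition.

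For the converse, suppose $A$ is Baer. Then $A$ is unital (a Baer ring always has a unit), and Lemma \ref{completeness} tells us that $\P(A)$ is complete. Because $A$ is unital, the discussion in Section \ref{com} shows that $\P(A)$ is a complemented modular lattice equipped with the orthocomplementation $p\mapsto 1-p$. Thus $\P(A)$ is an orthocomplemented complete modular lattice, and Kaplansky's theorem \cite{Kap2}, which asserts that every such lattice is automatically a continuous geometry, yields the conclusion.

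The argument is short precisely because the heavy lifting is already done elsewhere: Lemma \ref{completeness} converts ``complete'' into ``Baer'', and Kaplansky's theorem supplies the two continuity identities for free once orthocomplementation and completeness are in place. The only point that requires genuine care---and the closest thing to an obstacle---is the bookkeeping around definitions: one must note that ``continuous geometry'' already presupposes completeness (so the forward direction needs nothing beyond Lemma \ref{completeness}), and that the Baer hypothesis is exactly what guarantees unitality, hence the orthocomplementation needed to invoke \cite{Kap2}. Once these are pinned down, the corollary follows without further calculation.
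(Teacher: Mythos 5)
Your proof is correct, and both directions go through as you describe: the forward implication is exactly the paper's (continuous geometries are complete by definition, and Lemma \ref{completeness} converts completeness of $\P(A)$ into the Baer property, hence into the conditions of Proposition \ref{baer}), and your converse correctly assembles unitality of a Baer ring, the orthocomplementation $p\mapsto 1-p$ and modularity of $\P(A)$ from Section \ref{com}, and Kaplansky's theorem \cite{Kap2}. However, this is the route the paper only \emph{sketches} in the sentence preceding the corollary (``Kaplansky proved\dots\ Thus we obtain''); the proof the paper actually writes out is deliberately different, being prefaced by ``We may prove it without resorting to Kaplansky's theorem.'' There, the converse is handled by invoking the structural half of Proposition \ref{baer}: if $A$ is Baer then $A\cong R(\S)\oplus B$ with $\S$ a complete Boolean algebra and $B$ finite-dimensional, and one verifies directly that the projection lattice of such an algebra is a continuous geometry (the continuity identities hold in a complete Boolean algebra by infinite distributivity, hold trivially in the finite-dimensional lattice $\P(B)$, and pass to the direct product). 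The trade-off is clear: your argument is shorter but rests on Kaplansky's theorem, a genuinely deep black box, whereas the paper's argument is self-contained modulo Proposition \ref{baer}, which has already done the structural work; conversely, your approach never needs the explicit form $R(\S)\oplus B$ for the converse direction, only the abstract lattice properties of a unital R$^*$-algebra, so it would survive in settings where the classification of Proposition \ref{baer} is unavailable.
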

We may prove it without resorting to Kaplansky's theorem.
\begin{proof}
If $\P(A)$ is a continuous geometry, then $\P(A)$ is complete, so $A$ is of the form of Proposition \ref{baer}. 
It is easy to check that the projection lattice of an R$^*$-algebra of this form is a continuous geometry.
\end{proof}

Therefore, R$^*$-algebras themselves do not provide interesting examples of continuous geometries. 
However, it should be worth mentioning that two important examples of continuous geometries can be constructed from sequentially ultramatricial R$^*$-algebras. 

For a von Neumann algebra $M$ of finite type, the projection lattice of $M$ forms a continuous geometry. 
(It is a remarkable fact that $\P(M)$ is identified with the projection lattice of the $^*$-regular algebra $\mathcal{U}(M)$ formed of all affiliated operators relative to $M$. 
Observe that for the algebra $A=\mathcal{U}(M)$, which can be considered as an algebra of unbounded operators on a Hilbert space, the equivalence of Theorem \ref{equi} is far from true.)
In particular, the projection lattice of the AFD II$_1$ factor $\cR$ is a continuous geometry. 
Recall that $\cR$ arises as the $\sigma$-weak closure of the R$^*$-algebra $\bigotimes_{k\geq 1}\M_{n_k}$ (represented on a Hilbert space through the GNS-construction with respect to the trace), and it does not depend on the choice of integers $n_k\geq 2$. 

On the other hand, von Neumann also considered the completion of $\bigotimes_{k\geq 1}\M_{n_k}$ with respect to a metric called the rank metric, and proved that the lattice of projections forms a continuous geometry that does not depend on the choice of $(n_k)_{k\geq 1}$. 
He also proved that the lattice obtained in this way is different from any projection lattice of a finite von Neumann algebra. 
See \cite{N2} for the details.

\section{Classification of sequentially ultramatricial R$^*$-algebras}\label{countable}
In this section, we give a classification result on locally finite-dimensional R$^*$-algebras with countable Hamel dimension. 
Clearly, an R$^*$-algebra has a countable Hamel basis if and only if it is countably generated as a $^*$-algebra. 
It follows that the class of locally finite-dimensional R$^*$-algebras with countable Hamel dimension coincides with that of sequentially ultramatricial R$^*$-algebras.

It was shown by Bratteli \cite{Br} (after Glimm's classification of UHF algebras \cite{Gli}) that the classification problem of separable AF C$^*$-algebras is equivalent to that of sequentially ultramatricial R$^*$-algebras. 
The classification of sequentially ultramatricial R$^*$-algebras was completed by Elliott in the seminal paper \cite{E}.

There are at least two formulations of this result: 
One is by dimension local semigroups, and the other is by $K_0$-groups. 
Although the former formulation appears in Elliott's original paper \cite{E}, it is seemingly less commonly used among specialists nowadays (indeed, it does not appear explicitly in standard textbooks like \cite{Da, BlK}). 
Let us briefly recall both of these two formulations because the former formulation will be essentially used in the proof of Theorem \ref{sumlattice}.

Let $A$ be an R$^*$-algebra. 
Let $\E(A)$ denote the collection of idempotents, i.e., $\E(A):=\{e\in A\mid e^2=e\}$. 
For $e, f\in \E(A)$, we write $e\sim_0 f$ if there exist $x, y\in A$ with $xy=e, yx=f$. 
Then $\sim_0$ is an equivalence relation. 
Set $d_0(A):= \E(A)/{\sim_0}$. 
For $e\in \E(A)$, let $[e]_0\in d_0(A)$ denote the equivalence class with $e\in [e]_0$. 
We define the partial binary operation $+$ in $d_0(A)$ in the following manner (\emph{partial} in the sense that the operation is defined only for a certain collection of pairs of elements){.} 
For $[e]_0, [f]_0\in d_0(A)$, $[e]_0+[f]_0$ is defined if and only if there are $e'\in [e]_0, f'\in [f]_0$ with $e'f'=f'e'=0$, and in that case, we define $[e]_0+[f]_0=[e']_0+[f']_0:=[e'+f']_0$. 
This is actually a well-defined partially defined binary operation. 

The pair $(d_0(A), +)$ can also be obtained by means of projections. 
Consider the set $d(A):=\P(A)/{\sim}$, where the equivalence relation $\sim$ means the Murray--von Neumann equivalence. 
For $p\in \P(A)$, let $[p]\in d(A)$ denote the equivalence class with $p\in [p]$. 
We define the partial operation $+$ in $d(A)$ in the following manner. 
For $[p], [q]\in d(A)$, $[p]+[q]$ is defined if and only if there are $p'\in [p], q'\in [q]$ with $p'q'=0$, and in that case, we define $[p]+[q]=[p']+[q']:=[p'+q']$. 
Then this partial operation is well-defined, and moreover, by a routine argument as in \cite{Kap} or \cite[Section 4]{BlK}, $(d(A), +)$ is isomorphic to $(d_0(A), +)$ as sets endowed with partially defined operations. 
The \emph{dimension local semigroup} of $A$ is the pair $(d(A), +)$. 
 
Below we give minimal information to obtain the $K_0$-group of an R$^*$-algebra. 
See for example \cite[Chapter III]{BlK} for more details about this. 
(Note that an R$^*$-algebra $A$ is a local C$^*$-algebra in the sense of \cite{BlK}, namely, $A$ is a pre-C$^*$-algebra that is closed under holomorphic functional calculus.)
For an R$^*$-algebra $A$, we consider the R$^*$-algebra $\M_{\infty}(A)$, which can be considered as the collection of $\N\times\N$ matrices with finitely many nonzero entries in $A$. 
For $n\geq 1$, we identify $\M_{n}(A)$ with the collection of all operators in $\M_{\infty}(A)$ whose nonzero entries are in the $n\times n$ upper-left corner.
In that way we identify $\M_{\infty}(A)$ with the directed union of $(\M_n(A))_{n\geq 1}$.

Observe that for every pair of projections $p, q\in \M_{\infty}(A)$ there exists a pair of mutually orthogonal projections $p', q'\in \M_{\infty}(A)$ with $p\sim p'$, $q\sim q'$. 
Thus the operation $+$ on the set $d(\M_{\infty}(A))$ is defined for every pair of projections. 
We define $V(A) := d(\M_{\infty}(A))$, which is an abelian semigroup endowed with operation $+$. 
The $K_0$-group $K_0(A)$ of $A$ is defined as the Grothendieck group of $(V(A), +)$.

In the setting of nonunital C$^*$-algebras, it is mathematically more natural to define the $K_0$-group using the $K_0$-group of the unitization (see \cite[Section 5]{BlK}). 
However, the above naive definition works in the setting of R$^*$-algebras: 
Recall that the projection lattice of an R$^*$-algebra (in particular $\M_{\infty}(A)$) forms a net, which is an approximate identity. 
Therefore, by \cite[Proposition 5.5.5]{BlK}, our definition of the $K_0$-group coincides with the usual definition.

Recall that a unital (C$^*$-, or more general normed) algebra is said to have \emph{stable rank one} if the collection of invertible elements is dense. 
Clearly, a unital R$^*$-algebra and its completion have stable rank one.
It follows by \cite[Proposition 6.5.1]{BlK} that $V(A)$ is a cancellation semigroup, which means that there is a canonical injection from $V(A)$ into $K_0(A)$. 
It is easy to see that the same is true for nonunital R$^*$-algebras.
The image of this injection is written as $K_0(A)_+$.
Then $K_0(A)_+$ forms a cone of $K_0(A)$ which makes $K_0(A)$ an ordered group.
The \emph{scale} of $K_0(A)$ is defined as the subset $\Sigma(A):=\{[p]\mid p\in \P(A) \,\,(= \P(\M_1(A))\subset\P(\M_{\infty}(A)))\}$ of $K_0(A)_+$.
We are now in the position of stating Elliott's theorem (see \cite{E}, \cite[Section 7]{BlK} and \cite[Chapter IV]{Da}). 

\begin{theorem}\label{elliott}
Let $A, B$ be two sequentially ultramatricial R$^*$-algebras. 
The following are equivalent. 
\begin{enumerate}
\item $A$ is $^*$-isomorphic to $B$. 
\item $d(A)$ is isomorphic to $d(B)$ as sets endowed with partially defined binary operations.
\item There exists an ordered group isomorphism from $K_0(A)$ onto $K_0(B)$ that sends $\Sigma(A)$ onto $\Sigma(B)$. 
\item $\widehat{A}$ is $^*$-isomorphic to $\widehat{B}$. 
\end{enumerate}
\end{theorem}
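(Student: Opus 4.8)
The plan is to run a cycle of implications, reserving the two genuinely deep inputs---Elliott's classification of separable AF C$^*$-algebras by their scaled ordered $K_0$-groups, and Bratteli's correspondence---for the hard direction, while treating the remaining passages as formal bookkeeping. First I would dispose of the two easy implications. For $(1)\Rightarrow(2)$, a $^*$-isomorphism $\phi\colon A\to B$ carries projections to projections, partial isometries to partial isometries preserving source and range projections, and orthogonal projections to orthogonal projections; hence it descends to a bijection $d(A)\to d(B)$ respecting the partially defined addition, i.e.\ an isomorphism of dimension local semigroups. For $(1)\Rightarrow(4)$, Proposition \ref{isometry} tells us that an injective $^*$-homomorphism between R$^*$-algebras is isometric, so $\phi$ is isometric and extends uniquely to a $^*$-isomorphism $\widehat{A}\to\widehat{B}$ of completions.

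Next I would establish $(2)\Leftrightarrow(3)$ as a purely algebraic statement: the dimension local semigroup and the scaled ordered $K_0$-group encode the same data. Concretely, $V(A)=d(\M_{\infty}(A))$ is obtained from $d(A)$ by allowing all additions, $K_0(A)$ is its Grothendieck group, and---since a unital R$^*$-algebra has stable rank one, so that $V(A)$ is a cancellation semigroup---the canonical map $V(A)\to K_0(A)$ is injective with image $K_0(A)_+$. The scale $\Sigma(A)$ is exactly the image of $d(A)=d(\M_1(A))$, and the partial addition on $d(A)$ is the one inherited from the cone $K_0(A)_+$. Thus $d(A)$ is recovered from $(K_0(A),K_0(A)_+,\Sigma(A))$ and conversely; an isomorphism on either side stabilizes and passes to Grothendieck groups to yield one on the other. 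This is the bookkeeping of \cite{Kap} and \cite[Section 4]{BlK}.

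The substance lies in closing the cycle, for which I would first identify the invariants of $A$ with those of its completion and then invoke Elliott and Bratteli. Because $K_0$ commutes with inductive limits and every projection in an AF algebra is normwise close to---hence Murray--von Neumann equivalent to---a projection in some finite-dimensional stage, one has $K_0(A)\cong K_0(\widehat{A})$ as scaled ordered groups; so condition $(3)$ for $A,B$ is equivalent to the existence of a scaled ordered $K_0$-isomorphism $K_0(\widehat{A})\to K_0(\widehat{B})$. Elliott's theorem \cite{E} (see also \cite[Section 7]{BlK} and \cite[Chapter IV]{Da}) states precisely that such an isomorphism exists if and only if $\widehat{A}\cong\widehat{B}$, giving $(3)\Leftrightarrow(4)$. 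Combined with $(1)\Rightarrow(4)$, $(1)\Rightarrow(2)$, and $(2)\Leftrightarrow(3)$, it then remains only to prove $(4)\Rightarrow(1)$.

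The hard part will be this last implication, and it is genuinely subtle: as the excerpt stresses, the completion does not determine the R$^*$-algebra in general (for instance $\M_{\infty}$ and $F(\ell_2)$ share the completion $K(\ell_2)$). What rescues the argument is the restriction to the sequentially ultramatricial class, in which the dense R$^*$-algebra is unique up to $^*$-isomorphism. Given $\widehat{A}\cong\widehat{B}$, both $A$ and the image of $B$ appear as unions of increasing \emph{sequences} of finite-dimensional C$^*$-subalgebras with the same closure; by Bratteli's telescoping together with Elliott's approximate-intertwining argument \cite{Br}, one extracts subsequences and nearly commuting inclusions that can be corrected to an honest $^*$-isomorphism carrying $A$ onto $B$. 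This intertwining step---reconstructing a $^*$-isomorphism of the dense sequentially ultramatricial subalgebras from one of the completions---is exactly where the countability of the Hamel dimension (the existence of cofinal \emph{sequences} of finite-dimensional stages) is indispensable, and I expect it to be the main obstacle of the proof.
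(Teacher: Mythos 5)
Your proposal is correct and takes essentially the same route as the paper: the paper gives no independent proof of Theorem \ref{elliott}, stating it as Elliott's theorem with references to \cite{E}, \cite[Section 7]{BlK} and \cite[Chapter IV]{Da}, after having set up exactly the ingredients you invoke (stable rank one and cancellation, the injection of $V(A)$ into $K_0(A)$, the scale $\Sigma(A)$, the identification of $d(A)$ with $d_0(A)$, and Bratteli's reduction of the classification of separable AF C$^*$-algebras to that of sequentially ultramatricial R$^*$-algebras). Your cycle---the easy implications, the bookkeeping identifying $d(A)$ with the scaled ordered $K_0$-data, Elliott's classification applied to the completions, and the exact (unitarily corrected) Bratteli--Elliott intertwining for $(4)\Rightarrow(1)$, where the cofinal sequences of finite-dimensional stages are indeed indispensable---is precisely the intended assembly of those cited results.
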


The equivalence $A\cong B \iff \widehat{A}\cong \widehat{B}$ means that distinct sequentially ultramatricial R$^*$-algebras have distinct completions (separable AF C$^*$-algebras), and moreover, for each separable AF C$^*$-algebra any pair of dense R$^*$-subalgebras with countable Hamel dimension is mutually $^*$-isomorphic. 
Recall that the same is true for the class of commutative R$^*$-algebras by Stone's theorem. 
Theorem \ref{elliott} is invalid for ultramatricial R$^*$-algebras with uncountable Hamel dimension. 
For example, it is easy to see that both
\[
(\widehat{F(\ell_2)}, d(F(\ell_2)), K_0(F(\ell_2)), K_0(F(\ell_2))_+, \Sigma(F(\ell_2)))
\]
and
\[
(\widehat{\M_{\infty}}, d(\M_{\infty}), K_0(\M_{\infty}), K_0(\M_{\infty})_+, \Sigma(\M_{\infty}))
\] 
are isomorphic to 
\[
(K(\ell_2), \Z_{\geq 0}, \Z, \Z_{\geq 0}, \Z_{\geq 0}).
\] 

In what follows we give a lattice theoretic condition that is equivalent to the conditions of Theorem \ref{elliott}. 
We say two projections $p, q$ of an R$^*$-algebra $A$ are \emph{perspective} if there exists a projection $r\in \P(A)$ such that $p\wedge r=0=q\wedge r$ and $p\vee r=q\vee r$. 
We write $p\approx q$ if there is a collection $p_0:=p, p_1, \ldots, p_n:=q$ of finitely many projections in $A$ such that $p_{j-1}$ and $p_{j}$ are perspective, $j=1, \ldots, n$.

\begin{lemma}\label{perpeq}
Let $A$ be an R$^*$-algebra. 
Let  $p, q\in \P(A)$. 
The following are equivalent. 
\begin{itemize}
\item $p\sim q$.
\item $p\vee q-p\sim p\vee q-q$.
\item $p\approx q$.
\end{itemize}
\end{lemma}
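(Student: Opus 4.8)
The plan is to treat condition (ii), $p\vee q-p\sim p\vee q-q$, as the hinge: establish its equivalence with (i) and (iii) mostly by hand, and isolate a single cancellation statement as the real content. Throughout I write $e:=p\vee q$ and $c:=p\wedge q$; by Proposition \ref{operation} both lie in $A$, and so do all support projections and partial isometries produced below. The first step is to record a parallelogram law in the form $p-c\sim e-q$ (and symmetrically $q-c\sim e-p$). To prove it I pass to the unital R$^*$-algebra $eAe$ and consider $x:=(e-q)p\in A$. A direct computation of the kernel and of the closure of the range, using that ranges are closed in an R$^*$-algebra (Theorem \ref{equi}), gives $r(x)=p-c$ and $l(x)=e-q$; the partial isometry $v$ from the polar decomposition $x=v\lvert x\rvert$ lies in $A$ by Proposition \ref{operation}(3) and witnesses $p-c\sim e-q$. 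This is the analogue of Kaplansky's parallelogram law and needs nothing beyond functional calculus.

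Next come the directions that are essentially formal. For (iii)$\Rightarrow$(i): if $p_1,p_2$ are perspective with common complement $r$, then the parallelogram law applied to $(p_1,r)$ and to $(p_2,r)$ gives $p_1\sim (p_1\vee r)-r=(p_2\vee r)-r\sim p_2$; chaining and transitivity of $\sim$ yield $p\approx q\Rightarrow p\sim q$. For (ii)$\Rightarrow$(iii) I would first prove a ``graph'' lemma: if $f\wedge g=0$ and $f\sim g$ via a partial isometry $w$, then the range projection $r:=l(f-w)\in A$ is a common complement of $f$ and $g$ (here $f\wedge g=0$ is exactly what forces $r\wedge f=r\wedge g=0$), so $f$ and $g$ are perspective. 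Now the parallelogram law turns (ii) into $p-c\sim q-c$, and inside $(e-c)A(e-c)$ one has $(p-c)\wedge(q-c)=0$; hence $p-c$ and $q-c$ are perspective via some $r\le e-c$, in particular $r\perp c$. A short Hilbert-space check then shows that this same $r$ satisfies $p\wedge r=q\wedge r=0$ and $p\vee r=q\vee r=e$, so $r$ is a common complement of $p=(p-c)+c$ and $q=(q-c)+c$ and $p,q$ are perspective; in particular $p\approx q$. This closes the loop (ii)$\Rightarrow$(iii)$\Rightarrow$(i), and (ii)$\Rightarrow$(i) can also be read off directly by adding $c$ back to $p-c\sim q-c$ via a direct sum of partial isometries.

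The only substantive implication is (i)$\Rightarrow$(ii), which after the parallelogram law amounts to the cancellation statement that $p\sim q$ implies $p-c\sim q-c$, i.e. the common subprojection $c=p\wedge q$ may be cancelled. I expect this to be the main obstacle, and it is exactly where finiteness enters. Note first that Proposition \ref{finite} is precisely the assertion that $A$ is finite in the sense that $f\sim g\le f$ forces $f=g$, and that Lemma \ref{lr} supplies subequivalences: any nonzero $x\in f_2Af_1$ yields nonzero $r(x)\le f_1$ and $l(x)\le f_2$ with $r(x)\sim l(x)$. Combining these with von Neumann's generalized comparability for finite regular rings \cite[Part II]{N} gives cancellation by a standard argument: after splitting by a central projection one arranges $e-p\precsim e-q$ and $e-q\precsim e-p$, and finiteness upgrades mutual subordination to equivalence. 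I would present this step either by citing \cite[Part II]{N} directly or by a Zorn argument that extends a partial isometry between $e-p$ and $e-q$ until the untouched remainders are centrally orthogonal (using Lemma \ref{lr} to extend whenever they are not) and then rules out a nonzero remainder via finiteness. The delicate point — and the reason a naive attempt to ``rotate $v$ so that it fixes $cH$'' fails — is precisely this central obstruction, so some form of comparability seems unavoidable.
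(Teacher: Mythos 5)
Your parallelogram law, the graph lemma, and the resulting one-step perspectivity in (ii)$\Rightarrow$(iii) are all correct: the range computations check out, and closedness of ranges (Theorem \ref{equi}(3)) together with Proposition \ref{operation}(3) keeps every support projection and partial isometry inside $A$. Indeed this part is \emph{sharper} than the paper's argument, which only produces a chain of perspectivities by writing the unitary $v\in eAe$ with $vpv^*=q$ as $e^{ia}$ and discretizing the path $t\mapsto e^{iat}pe^{-iat}$; and your (iii)$\Rightarrow$(i) via two applications of the parallelogram law matches the paper's, which instead uses the single operator $x=q(p\vee r-r)p$ together with Lemma \ref{lr}. The genuine gap is exactly where you predicted the difficulty: the cancellation step (i)$\Rightarrow$(ii), and neither of your two proposed routes to it exists.

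First, \cite[Part II]{N} (the chapters on regular rings and complemented modular lattices) contains no comparability theorem; in von Neumann's book comparability comes from the dimension function of a continuous geometry, i.e.\ it needs a \emph{complete} lattice, and by Proposition \ref{baer} the only R$^*$-algebras with complete projection lattice are of the degenerate form $R(\S)\oplus B$ with $B$ finite-dimensional. Worse, generalized comparability is simply \emph{false} for R$^*$-algebras: in the unitization $A$ of $\M_\infty\oplus\M_\infty$ the only central projections are $0$ and $1$, yet the rank-one projections $p=(p_1,0)$ and $q=(0,q_1)$ satisfy neither $p\precsim q$ nor $q\precsim p$, since any partial isometry in $A$ whose initial projection is $p$ has vanishing scalar part, hence lies in the ideal and respects the two summands. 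The Zorn variant fails for the same structural reason: an increasing chain of partial isometries in $A$ has no upper bound in $A$ --- one cannot sum infinitely many orthogonal partial isometries in a nonclosed algebra, and $\P(A)$ is not even upward $\sigma$-complete outside the special cases of Proposition \ref{sigma} --- so the exhaustion need not terminate and its transfinite limit leaves $A$. The paper's fix is analytic rather than lattice-theoretic: since every $x\in eAe$ has finite spectrum, $x-\lambda$ is invertible in $eAe$ for all small $\lambda\neq 0$ off $\sigma(x)$ (Lemma \ref{ax1}), so $eAe$ has stable rank one; R$^*$-algebras are local C$^*$-algebras in the sense of \cite{BlK}, and \cite[Propositions 6.4.1 and 6.5.1]{BlK} then give cancellation, hence $p\sim q\iff e-p\sim e-q$ in $eAe$. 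If you replace your comparability step by this stable-rank-one argument, the rest of your proof goes through verbatim and in fact yields the stronger conclusion that $p\sim q$ implies $p$ and $q$ are perspective, not merely $p\approx q$.
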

\begin{proof}
By restricting our attention to the unital R$^*$-subalgebra $(p\vee q)A(p\vee q)$, and applying \cite[Propositions 6.4.1 and 6.5.1]{BlK}, we see the equivalence of the first two items.
Suppose that the first two items hold. 
There is a partial isometry $v\in A$ such that $vv^*=p\vee q=v^*v$ and $vpv^* = q$. 
We may consider $v$ as a unitary operator in $(p\vee q)A(p\vee q)$. 
Since $v$ has finite spectrum, there is a (unique) positive operator $a\in (p\vee q)A(p\vee q)$ such that $\lVert a\rVert<2\pi$ and $v=e^{ia}$. 
Fix a sufficiently large $n$.
It is an elementary exercise to show that $p_{j-1}:= e^{ia(j-1)/n}pe^{-ia(j-1)/n}$ and $p_{j}:=e^{ia{j}/n}pe^{-iaj/n}$ are perspective (set e.g.\ $r:= p_{j-1}\vee p_{j}-p_{j-1}$), $j=1, \ldots, n$, showing that $p=p_0\approx p_n=e^{ia}pe^{-ia}=q$.

Let $r\in \P(A)$ be a projection such that $p\wedge r=0=q\wedge r$ and $p\vee r=q\vee r$. 
Then it is easy to see that the operator $x= q(p\vee r -r)p$ satisfies $r(x)=p$ and $l(x)=q$. 
By Lemma \ref{lr}, we have $p\sim q$. 
Since $\sim$ is transitive, we see that $p\approx q$ implies $p\sim q$.
\end{proof}

As a corollary{,} we see that a unital R$^*$-algebra has some special properties as a regular ring.
\begin{corollary}
A unital R$^*$-algebra $A$ is directly finite (i.e., if $x, y\in A$ and $xy=1$, then $yx=1$) and unit regular (i.e., for every $x\in A$ there exists an invertible $y\in A$ with $xyx=x$). 
\end{corollary}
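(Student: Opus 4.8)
The plan is to treat the two assertions separately: direct finiteness will come from the finiteness of projections (Proposition \ref{finite}), while unit regularity will hinge on the \emph{dual} equivalence $p\sim q\Rightarrow 1-p\sim 1-q$, which I extract from Lemma \ref{perpeq}.

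For direct finiteness, suppose $xy=1$ with $1=\operatorname{id}_H$. Then $x$ is surjective, so writing the polar decomposition $x=v\lvert x\rvert$ (with $v\in A$ by Proposition \ref{operation}(3)) gives $vv^*=l(x)=1$. Applying Proposition \ref{finite} to the partial isometry $v^*\in A$ — whose hypothesis $v^*v\le vv^*=1$ is automatic — yields $v^*v=1$, so $r(x)=1$ and $x$ is injective as well. Hence $x$ is invertible in $B(H)$ and, being algebraic, invertible already in $A$ by Lemma \ref{ax1}; therefore $y=x^{-1}$ and $yx=1$.

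For unit regularity, fix $x\in A$ with polar decomposition $x=v\lvert x\rvert$, put $p:=r(x)$ and $q:=l(x)$, and recall $p\sim q$ by Lemma \ref{lr}. The crux is the claim that $1-p\sim 1-q$ in $A$. By Lemma \ref{perpeq} it is enough to show $1-p\approx 1-q$, and since $\sim$ is transitive I may run along a chain of perspectivities and so reduce to the case that $p$ and $q$ are themselves perspective, via a projection $r$ with $p\wedge r=q\wedge r=0$ and $p\vee r=q\vee r=:m$. Passing to orthocomplements converts these relations into $(1-p)\wedge(1-r)=1-m=(1-q)\wedge(1-r)$ and $(1-p)\vee(1-r)=1=(1-q)\vee(1-r)$. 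As $p,r\le m$ we have $1-m\le 1-p,\,1-q,\,1-r$, so all three projections lie in the interval $[1-m,1]$; the order isomorphism $t\mapsto t-(1-m)$ identifies this interval with $\P(mAm)$ and exhibits $m-p$ and $m-q$ as perspective projections in the R$^*$-algebra $mAm$ (common axis $m-r$, meet $0$, join $m$). Lemma \ref{perpeq} applied inside $mAm$ gives a partial isometry $u\in mAm$ with $u^*u=m-p$ and $uu^*=m-q$; since $1-m$ is orthogonal to both $m-p$ and $m-q$, the element $W:=u+(1-m)\in A$ satisfies $W^*W=1-p$ and $WW^*=1-q$, proving $1-p\sim 1-q$.

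With the dual equivalence established I build the unit $y$ directly. Pick a partial isometry $w\in A$ with $w^*w=1-q$ and $ww^*=1-p$ (take $w=W^*$), let $x^\dagger\in A$ be the Moore--Penrose inverse (Proposition \ref{operation}(4)), and set $y:=x^\dagger+w$. Then $x^\dagger$ is a bijection $qH\to pH$ that annihilates $(1-q)H$, while $w$ is a bijection $(1-q)H\to(1-p)H$ that annihilates $qH$, so $y$ is a bijection of $H$, hence invertible in $B(H)$ and, being algebraic, in $A$ by Lemma \ref{ax1}. Since $wq=0$ and $\operatorname{ran}x\subset qH$, we get $yx=x^\dagger x=r(x)=p$, whence $xyx=xp=x$. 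I expect the main obstacle to be exactly the dual equivalence $1-p\sim 1-q$: this is the concrete form of internal cancellation for finite regular rings, and the delicate point is transporting perspectivity through orthocomplementation into the corner $mAm$ so that Lemma \ref{perpeq} can be reapplied there.
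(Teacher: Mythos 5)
Your proof is correct, and it differs from the paper's in two places worth recording. For direct finiteness the paper does not argue directly at all: it proves unit regularity first and then derives direct finiteness by the standard ring-theoretic implication (if $xy=1$ and $xax=x$ with $a$ invertible, then $xa=xaxy=xy=1$, so $x=a^{-1}$ and $y=a$). Your argument --- applying Proposition \ref{finite} to $v^*$ from the polar decomposition, then Lemma \ref{ax1} --- is a genuine alternative: it is purely operator-theoretic, independent of unit regularity, and in particular does not depend on Lemma \ref{perpeq} and hence not on the cancellation machinery from \cite[Propositions 6.4.1 and 6.5.1]{BlK} that underlies it. For unit regularity your final construction $y=x^{\dagger}+w$ is exactly the paper's $x^{\dagger}+v^{*}$ (the paper simply exhibits the inverse $x+v$ explicitly, where you invoke bijectivity plus Lemma \ref{ax1}), but your route to the key equivalence $1-r(x)\sim 1-l(x)$ is more roundabout than what the paper's one-line ``Lemma \ref{perpeq} combined with Lemma \ref{lr}'' intends. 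You go through the third item of Lemma \ref{perpeq} ($p\sim q\Rightarrow p\approx q$), then dualize each perspectivity through the order isomorphism between the interval $[1-m,1]$ and $\P(mAm)$ and reapply the easy direction of the lemma in the corner; all of this checks out, including the bookkeeping with $m-p$, $m-q$, $m-r$ and the extension $W=u+(1-m)$. However, the \emph{second} item of Lemma \ref{perpeq} gives the same conclusion in one step: $p\sim q$ yields $p\vee q-p\sim p\vee q-q$ via a partial isometry $u$ lying in the corner $(p\vee q)A(p\vee q)$, and then $u+(1-p\vee q)$ directly witnesses $1-p\sim 1-q$, with no chains of perspectivities and no interval-dualization. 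Note that both routes still rest on the nontrivial direction of Lemma \ref{perpeq}, so your unit-regularity argument buys no reduction in prerequisites, whereas your direct-finiteness argument genuinely does.
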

\begin{proof}
Let $x\in A$.
By the preceding lemma combined with Lemma \ref{lr}, we see that $1-l(x)\sim 1-r(x)$. 
Take a partial isometry $v\in A$ with $vv^*=1-l(x)$ and $v^*v=1-r(x)$. 
A direct calculation shows that $x^{\dagger}+v^*$ has inverse $x+v$ and $x(x^{\dagger}+v^*)x=xx^{\dagger}x=x$. 
Thus $A$ is unit regular. 
Recall that unit regularity implies direct finiteness. 
Indeed, if $A$ is unit regular and $xy=1$, take invertible $a\in A$ with $xax=x$, then $xa=xaxy=xy=1$, thus $x=a^{-1}$ and $y=a$, which implies $yx=1$. 
\end{proof}

\begin{lemma}
Let $A$ be an R$^*$-algebra. 
Let  $p, q\in \P(A)$. 
Then $[p]+[q]$ is defined in $d(A)$ if and only if 
there exist projections $p', q'\in \P(A)$ with $p\sim p'$, $q\sim q'$ and $p'\wedge q'=0$, 
and moreover, if there exist such $p', q'$, then $[p]+[q]=[p'\vee q']$.
\end{lemma}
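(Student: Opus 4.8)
The plan is to treat the trivial implication by unwinding the definition of $+$ on $d(A)$, and to reduce the substantial implication to showing that $p'\sim p'\vee q'-q'$ whenever $p'\wedge q'=0$; this is the zero-meet instance of the parallelogram law, and I would extract it directly from Lemma \ref{lr}.

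First I would dispose of the forward direction. If $[p]+[q]$ is defined, then by definition there are representatives $p_0\sim p$ and $q_0\sim q$ with $p_0q_0=0$, and $[p]+[q]=[p_0+q_0]$. Orthogonal projections satisfy $p_0\wedge q_0=0$ and $p_0\vee q_0=p_0+q_0$, so $p':=p_0$, $q':=q_0$ already witness the existence condition, and $[p'\vee q']=[p_0+q_0]=[p]+[q]$. Thus everything reduces to the converse: given representatives $p',q'$ with $p'\wedge q'=0$, I must manufacture orthogonal representatives and compute the resulting class.

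For the converse I would pass to the unital R$^*$-algebra $eAe$ with unit $e:=p'\vee q'$ and study $x:=(e-q')p'\in eAe$. Computing $x^*x=p'-p'q'p'$, I would identify its right support: a vector $h\in p'H$ lies in $\ker(p'-p'q'p')$ exactly when $q'h=h$, which forces $h\in(p'\wedge q')H=\{0\}$; hence $r(x)=s(x^*x)=p'$, and this is precisely the step that consumes the hypothesis $p'\wedge q'=0$. Symmetrically, $xx^*=(e-q')p'(e-q')$ has kernel inside $(e-q')H$ equal to $\big((e-q')\wedge(e-p')\big)H$, and by De Morgan in the orthocomplemented lattice $\P(eAe)$ this meet equals $e-(p'\vee q')=0$, so $l(x)=s(xx^*)=e-q'=p'\vee q'-q'$. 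Since the partial isometry in the polar decomposition of $x$ lies in $A$ by Proposition \ref{operation}(3), Lemma \ref{lr} then gives $p'=r(x)\sim l(x)=p'\vee q'-q'$.

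To finish, I would set $p'':=p'\vee q'-q'$ and $q'':=q'$: these are orthogonal projections with $p''\sim p'\sim p$, $q''\sim q$, and $p''+q''=e=p'\vee q'$, so $[p]+[q]$ is defined and equals $[p''+q'']=[p'\vee q']$. Because $[p]+[q]$ is a single well-defined element once it exists, this computation simultaneously shows that $[p'\vee q']$ does not depend on the chosen representatives $p',q'$. The hard part will be the middle paragraph: the two kernel identifications and especially the De Morgan reduction $(e-q')\wedge(e-p')=e-(p'\vee q')=0$, which together encode the zero-meet parallelogram law that powers the whole argument.
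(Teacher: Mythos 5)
Your proof is correct and takes essentially the same route as the paper: the paper sets $x:=q'(p'\vee q'-p')$ and reads off $l(x)=q'$, $r(x)=p'\vee q'-p'$ to get $q'\sim p'\vee q'-p'$ via Lemma \ref{lr}, which is the mirror image of your $x=(e-q')p'$ giving $p'\sim p'\vee q'-q'$. Your kernel computations and the De Morgan step merely spell out the support identifications that the paper asserts tersely, and the forward direction is handled identically (the paper dismisses it as trivial).
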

\begin{proof}
Assume that $p', q'\in \P(A)$ satisfy $p\sim p'$, $q\sim q'$ and $p'\wedge q'=0$. 
Then $x:= q' (p'\vee q' -p')$ satisfies $l(x)=q'$ and $r(x)=p'\vee q' -p'$, which implies that $q'\sim p'\vee q' - p'$. 
Thus we obtain $q\sim p'\vee q' - p'$, which together with $p\sim p'$ implies that $[p]+[q]$ is defined and equal to $[p' + (p'\vee q' - p')] = [p'\vee q']$.
The converse implication is trivial.
\end{proof}

\begin{theorem}\label{sumlattice}
Let $A, B$ be sequentially ultramatricial R$^*$-algebras.
If $\P(A)$ is lattice isomorphic to $\P(B)$, then $A$ is $^*$-isomorphic to $B$. 
\end{theorem}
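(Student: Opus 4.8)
The plan is to reduce everything to Elliott's theorem (Theorem \ref{elliott}), specifically the equivalence $(1)\Leftrightarrow(2)$: it suffices to produce an isomorphism of the dimension local semigroups $(d(A),+)\to(d(B),+)$, where $d(A)=\P(A)/{\sim}$. So given a lattice isomorphism $\psi\colon\P(A)\to\P(B)$, I would show that $\psi$ descends to a well-defined isomorphism of sets with partially defined operations via $\bar\psi([p]):=[\psi(p)]$.

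The first and main point is that Murray--von Neumann equivalence is a purely lattice-theoretic notion. By Lemma \ref{perpeq}, for $p,q\in\P(A)$ one has $p\sim q$ if and only if $p\approx q$, and $\approx$ is generated by perspectivity, whose definition uses only the lattice operations $\wedge,\vee$ and the least element $0$. Since any lattice isomorphism preserves $\wedge$, $\vee$ and the least element, it preserves perspectivity, hence $\approx$, hence $\sim$; applying the same reasoning to $\psi^{-1}$ gives the converse direction. Therefore $p\sim q\iff\psi(p)\sim\psi(q)$, and $\bar\psi\colon d(A)\to d(B)$ is a well-defined bijection.

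Next I would show that $\bar\psi$ respects the partial addition. Here the lemma immediately preceding the theorem gives a lattice description: $[p]+[q]$ is defined in $d(A)$ exactly when there exist $p',q'\in\P(A)$ with $p\sim p'$, $q\sim q'$, and $p'\wedge q'=0$, in which case $[p]+[q]=[p'\vee q']$. Feeding such $p',q'$ through $\psi$ yields $\psi(p)\sim\psi(p')$, $\psi(q)\sim\psi(q')$, and $\psi(p')\wedge\psi(q')=\psi(p'\wedge q')=\psi(0)=0$; so $[\psi(p)]+[\psi(q)]$ is defined and equals $[\psi(p')\vee\psi(q')]=[\psi(p'\vee q')]=\bar\psi([p'\vee q'])=\bar\psi([p]+[q])$. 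The reverse implication for being defined follows by the same argument applied to $\psi^{-1}$. Thus $\bar\psi$ is an isomorphism of sets endowed with partially defined binary operations.

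Finally, invoking Theorem \ref{elliott} $(2)\Rightarrow(1)$ yields that $A$ is $^*$-isomorphic to $B$, completing the proof. I do not expect a serious obstacle: the conceptual work---encoding both $\sim$ and the partial sum $+$ in lattice terms---has already been carried out in Lemma \ref{perpeq} and the subsequent lemma, so the theorem is essentially a formal consequence of Elliott's classification once one observes that the dimension local semigroup is a lattice invariant. The only point requiring a little care is the bookkeeping that $\bar\psi$ and its inverse match up both the domain of definedness and the value of $+$, which is the routine verification sketched above.
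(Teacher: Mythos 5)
Your proposal is correct and takes essentially the same route as the paper's own proof: both recover Murray--von Neumann equivalence from the lattice structure via Lemma \ref{perpeq} (since perspectivity is expressed using only $\wedge$, $\vee$ and $0$), encode the definedness and value of the partial sum on $d(A)$ via the lemma immediately preceding the theorem, and then invoke Theorem \ref{elliott} $(2)\Rightarrow(1)$. Your write-up simply makes explicit the bookkeeping (that $\bar\psi([p]):=[\psi(p)]$ is a well-defined isomorphism of the dimension local semigroups) which the paper states as an observation.
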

\begin{proof}
By Theorem \ref{elliott} $(2)\Rightarrow(1)$ it suffices to show that we may construct $(d(A), +)$ only from the lattice structure of $\P(A)$. 
Observe that the relation $\approx$ is defined by means of lattice operations.  
It follows by Lemma \ref{perpeq} that the Murray--von Neumann equivalence relation is recovered from the lattice structure. 
Moreover, the preceding lemma implies that for a pair $p, q$ of projections, $[p]+[q]$ is defined if and only if there exist $p', q'\in \P(A)$ such that $p'\sim p$, $q'\sim q$, and $p'\wedge q'=0$, and in that case, we also have $[p]+[q]=[p'\vee q']$. 
This implies that the local semigroup structure of $d(A)$ can be fully recovered only from the lattice structure of $\P(A)$. 
Thus the proof is complete.
\end{proof}
In Section \ref{nd}, we will see that lattice isomorphisms of projection lattices of certain R$^*$-algebras can be described more explicitly by means of ring isomorphisms, as a consequence of von Neumann's theorem (Theorem \ref{neumann}). 

More results on separable AF C$^*$-algebras are summarized in \cite[Chapters III, IV]{Da}.
Mundici and Panti \cite{MP} studied separable AF C$^*$-algebras $\widehat{A}$ such that $d(A)$ forms a lattice.

\section{Von Neumann, Dye, Gleason}\label{nd}
In this section{,} we give several results concerning mappings on the lattice of projections of an R$^*$-algebra.

First{,} we give a consequence of von Neumann's result.
A \emph{ring isomorphism} is an additive and multiplicative bijection.
It is not necessarily linear. 
Indeed, there exist many nonlinear ring automorphisms of $\C$. 

Let $A, B$ be R$^*$-algebras. 
Recall that each principal right ideal of $A$ can be written as $pA$ for a unique projection $p\in \P(A)$. 
Therefore, if $\phi\colon A\to B$ is a ring isomorphism, then there exists a unique lattice isomorphism $\psi\colon \P(A)\to\P(B)$ such that 
\[
\phi(pA)=\psi(p)B
\]
for all $p\in \P(A)$. 
(Note that this equation implies $l(\phi(p))=\psi(p)$.)
Von Neumann's theorem states that the converse also holds under a certain condition.
\begin{theorem}[von Neumann's theorem for R$^*$-algebras]\label{neumann}
Let $A, B$ be unital R$^*$-algebras, and $n\geq 3$ an integer. 
If  $\psi\colon \P(\M_n(A))\to \P(B)$ is a lattice isomorphism, then there exists a ring isomorphism $\phi\colon \M_n(A)\to B$ such that 
\[
\phi(p\M_n(A))=\psi(p)B
\]
 for all $p\in \P(\M_n(A))$. 
\end{theorem}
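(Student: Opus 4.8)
The plan is to translate the statement into von Neumann's coordinatization theory and then invoke his reconstruction of the coordinate ring of a complemented modular lattice equipped with a frame. By Lemma \ref{starregular} both $\M_n(A)$ and $B$ are unital $^*$-regular rings, so by \cite[Theorem II.4.5]{N} (as recalled in Section \ref{com}) the assignment $p\mapsto pR$ identifies $\P(\M_n(A))$ and $\P(B)$ with the lattices $L(\M_n(A))$ and $L(B)$ of principal right ideals, which are complemented modular. First I would transport $\psi$ to a lattice isomorphism $\Psi\colon L(\M_n(A))\to L(B)$; the desired conclusion $\phi(p\M_n(A))=\psi(p)B$ then amounts to asking that $\phi$ induce $\Psi$ on principal right ideals.

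Next I would produce a frame. The matrix units $e_{ij}=E_{ij}\otimes 1_A\in \M_n(A)$ give pairwise orthogonal projections $e_{ii}$ summing to $1$ and partial isometries witnessing $e_{ii}\sim e_{jj}$; hence $a_i:=e_{ii}\M_n(A)$, together with the perspectivity axes supplied by the $e_{1j}$, form a normalized homogeneous basis of $L(\M_n(A))$ of order $n$. Since $\Psi$ is a lattice isomorphism it preserves perspectivity, so the images $\Psi(a_i)$, generated by projections $b_i\in \P(B)$, together with the transported axes form a normalized homogeneous basis of $L(B)$ of the same order $n\geq 3$. Using Lemma \ref{lr} and the dictionary between perspective ideals and equivalent projections (Lemma \ref{perpeq}), the perspectivities among the $b_i$ lift to a full system of $n\times n$ matrix units in $B$; consequently $B\cong \M_n(B_0)$ as rings, where $B_0:=b_1Bb_1$, and under this identification $b_i$ corresponds to the standard $e_{ii}\M_n(B_0)$.

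The heart of the argument is von Neumann's coordinatization theorem \cite[Part II]{N}: the coordinate ring attached to a complemented modular lattice relative to a normalized frame of order $\geq 3$ is built entirely out of lattice operations performed on frame-relative elements (the ring of ``$L$-numbers''), and for the two lattices at hand this coordinate ring is canonically $A$ (namely $e_{11}\M_n(A)e_{11}\cong A$) and $B_0$, respectively. Because $\Psi$ carries the normalized frame of $L(\M_n(A))$ to that of $L(B)$ and respects all lattice operations, it induces a ring isomorphism $\phi_0\colon A\to B_0$. I would then set $\phi:=\M_n(\phi_0)$ followed by the identification $\M_n(B_0)\cong B$; this is a ring isomorphism sending each $a_i$ to $\Psi(a_i)$ and intertwining the coordinate data, hence inducing $\Psi$ on principal right ideals. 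Translating back through the projection--ideal correspondence yields $\phi(p\M_n(A))=\psi(p)B$, as required. Note that only a ring (not algebra) isomorphism is claimed, which is exactly what this machinery produces.

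The main obstacle is the case $n=3$. Von Neumann's coordinatization of an abstract complemented modular lattice requires order $\geq 4$; at order $3$ one must know the lattice satisfies the Arguesian identity, since non-Desarguesian projective planes are not coordinatizable. This causes no difficulty here, because $L(B)$ embeds as a sublattice of the lattice of all submodules of the right module $B_B$ (the finitely generated right ideals of a regular ring are closed under sum and intersection), and submodule lattices are Arguesian while sublattices of Arguesian lattices are again Arguesian. Thus the coordinatization, and with it the functorial passage from the frame-preserving lattice isomorphism $\Psi$ to the ring isomorphism $\phi_0$, is available uniformly for all $n\geq 3$. The remaining verifications --- that the transported objects genuinely constitute a normalized frame and that $\phi_0$ extends to the claimed $\phi$ --- are routine given von Neumann's theory.
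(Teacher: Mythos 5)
Your proposal is correct in substance, but it is worth knowing that the paper's entire proof is a one-line citation: the statement is, verbatim, a special case of von Neumann's \cite[Theorem II.4.2]{N}, which says that for regular rings with unit, any lattice isomorphism from the lattice of principal right ideals of $\M_n(A)$, $n\geq 3$, onto that of another regular ring $B$ is induced by a ring isomorphism; the projection--ideal dictionary you set up via Lemma \ref{starregular} and \cite[Theorem II.4.5]{N} is exactly how the paper's Section \ref{com} translates this into the language of $\P(\M_n(A))$ and $\P(B)$. So what you have really done is reconstruct the proof of the cited theorem, and your reconstruction does follow von Neumann's own strategy: transport the standard frame, lift the transported homogeneous basis to a system of matrix units so that $B\cong \M_n(B_0)$, identify the lattice-theoretically defined rings of $L$-numbers with $A$ and $B_0$ respectively, and check that the resulting ring isomorphism realizes $\Psi$ on every principal right ideal (this last verification, which you label routine, is genuinely the bulk of von Neumann's Chapter II.4 and should be flagged as such rather than waved at). Two local corrections: first, the unique projections $b_i$ with $b_iB=\Psi(a_i)$ need not be pairwise orthogonal, so the matrix units must be built from the idempotents $e_i\in\Psi(a_i)$ of the decomposition $1=\sum_i e_i$ coming from independence of the $\Psi(a_i)$; this costs nothing since only a ring isomorphism is claimed, and in fact Lemmas \ref{lr} and \ref{perpeq} (which use the $^*$-structure) are not needed here --- perspective principal right ideals share a complement and are therefore isomorphic as right modules, which is all the matrix-unit construction requires. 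Second, your Arguesian detour for $n=3$ is sound (the lattice of principal right ideals of a regular ring is a sublattice of the Arguesian lattice of all submodules of $B_B$, and the Arguesian identity passes to sublattices), but it addresses von Neumann's \emph{existence} (coordinatization) theorem, which needs order $\geq 4$; Theorem II.4.2 itself is proved by von Neumann for order $\geq 3$, precisely because both lattices in play are already coordinatized by concrete regular rings, so the $L$-number calculus is validated ring-theoretically on one side and transported by $\Psi$ to the other. In short: the paper buys brevity by quoting the exact result, while your unpacking buys transparency --- it makes visible where $n\geq 3$ and unitality enter (compare Example \ref{nonunital}, which shows the unital hypothesis cannot be dropped) --- at the cost of re-deriving, in outline, the theorem being cited.
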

\begin{proof}
This is a special case of \cite[Theorem II.4.2]{N}.
\end{proof}

The uniqueness of $\phi$ in the above theorem can be obtained from the proposition below. 
\begin{lemma}\label{rl}
Let $A\subset B(H)$ be an R$^*$-algebra and $e\in A$ an idempotent, i.e., $e^2=e$. 
If $r(e)\leq l(e)$, then $e$ is a projection.
\end{lemma}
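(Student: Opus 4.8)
The plan is to translate the hypothesis $r(e)\le l(e)$ into an inclusion of ranges, and then to exploit the defining property of an idempotent, namely that $e$ restricts to the identity on its own range. The key identity to aim for is $ee^*=e^*=e$, from which $e=e^*$ falls out immediately.

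First I would record the elementary facts about idempotents. For any bounded idempotent $e$ the range $\operatorname{ran} e=\ker(1-e)$ is closed, and $e\zeta=\zeta$ for every $\zeta\in\operatorname{ran} e$. Applying this to $e^*$, which is again an idempotent since $(e^*)^2=(e^2)^*=e^*$, shows that $\operatorname{ran} e^*$ is closed and equals $(\ker e)^{\perp}$. Consequently $l(e)$ is the orthogonal projection onto $\operatorname{ran} e$ while $r(e)$ is the orthogonal projection onto $(\ker e)^{\perp}=\operatorname{ran} e^*$, so the hypothesis $r(e)\le l(e)$ says exactly that $\operatorname{ran} e^*\subseteq\operatorname{ran} e$.

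Next, since $e^*\eta\in\operatorname{ran} e^*\subseteq\operatorname{ran} e$ for every $\eta\in H$ and $e$ fixes $\operatorname{ran} e$ pointwise, I obtain $e(e^*\eta)=e^*\eta$, that is, $ee^*=e^*$. Taking adjoints and using that $ee^*$ is self-adjoint gives $ee^*=(ee^*)^*=(e^*)^*=e$. Comparing the two identities yields $e=ee^*=e^*$, so $e$ is a self-adjoint idempotent, i.e.\ a projection.

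I do not expect any serious obstacle here; the only point requiring a little care is the bookkeeping of support projections, specifically the identification $(\ker e)^{\perp}=\operatorname{ran} e^*$ together with the observation that the range of an idempotent is closed, so that no closures intervene when passing from $\operatorname{ran} e^*\subseteq\operatorname{ran} e$ to the pointwise statement $e(e^*\eta)=e^*\eta$. It is worth noting that the argument uses only that $e$ is a bounded idempotent on $H$ satisfying $r(e)\le l(e)$, so the R$^*$-algebra hypothesis merely supplies the ambient context in which the lemma will be applied.
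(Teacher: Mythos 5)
Your proof is correct, and it takes a genuinely different route from the paper's. The paper argues algebraically inside the $^*$-ring via the Moore--Penrose inverse: from $e^2=e$ it gets $l(e)=ee^{\dagger}=e^{2}e^{\dagger}=el(e)$, notes that $r(e)\leq l(e)$ gives $el(e)=er(e)l(e)=er(e)=e$, and concludes $e=l(e)$, so $e$ equals its own left support projection. You instead work spatially: you use the elementary facts that the range of a bounded idempotent is closed ($\operatorname{ran} e=\ker(1-e)$) and that $e$ fixes its range pointwise, translate $r(e)\leq l(e)$ into $\operatorname{ran} e^{*}\subseteq\operatorname{ran} e$, and deduce $ee^{*}=e^{*}$, whence by self-adjointness of $ee^{*}$ also $ee^{*}=e$, so $e=e^{*}$. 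The two arguments are structurally parallel --- the paper shows $e=l(e)$ while you show $e=e^{*}$ --- but yours is more self-contained: it needs neither the existence of $e^{\dagger}$ in $A$ (Proposition 7.1(4) of the paper) nor any R$^*$-algebra hypothesis, and as you correctly observe it proves the statement for an arbitrary bounded idempotent on $H$ with $r(e)\leq l(e)$. What the paper's version buys is brevity within the formalism already set up (the Moore--Penrose identities $ee^{\dagger}=l(e)$, $e^{\dagger}e=r(e)$ make it a two-line ring computation), which fits the algebraic flavor of the surrounding section; your version buys independence from that machinery at the cost of a little Hilbert-space bookkeeping, all of which you carry out accurately.
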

\begin{proof}
By $e^2=e$, we have $l(e)=ee^{\dagger}=e^2e^{\dagger}=el(e)$. 
Since $r(e)\leq l(e)$, we have $el(e)=e$. 
Thus $e=l(e)$ is a projection.
\end{proof}

\begin{proposition}\label{uniqueri}
Let $A$ be an R$^*$-algebra, and let $n\geq 2$ be an integer. 
If $\phi\colon \M_n(A)\to \M_n(A)$ is a ring automorphism with
\[
\phi(p\M_n(A))=p\M_n(A)
\]
for all $p\in \P(\M_n(A))$, then $\phi$ is the identity mapping on $\M_n(A)$.
\end{proposition}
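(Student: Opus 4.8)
The plan is to prove that $\phi$ fixes every projection, that $\phi$ is complex-linear, and then to finish by writing each element as a complex-linear combination of projections. For the main line I assume $\M_n(A)$ is unital, which is the situation in which the statement is applied to get uniqueness in Theorem \ref{neumann}; the nonunital case requires only minor bookkeeping with local units and I would suppress it. The basic tool is that every principal right ideal of the R$^*$-algebra $\M_n(A)$ is generated by a projection: for $x\in\M_n(A)$ one has $x=l(x)x$, so $x\M_n(A)\subseteq l(x)\M_n(A)$, while $l(x)=xx^{\dagger}$ with $x^{\dagger}\in\M_n(A)$ by Proposition \ref{operation}(4), giving the reverse inclusion; hence $x\M_n(A)=l(x)\M_n(A)$. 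Since $l(x)\in\P(\M_n(A))$, the hypothesis yields $\phi(x)\M_n(A)=\phi(x\M_n(A))=\phi(l(x)\M_n(A))=l(x)\M_n(A)$, and by uniqueness of the generating projection $l(\phi(x))=l(x)$ for every $x$. So my first recorded fact is that $\phi$ preserves left supports.

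Next I would show $\phi(p)=p$ for every projection $p$. The element $\phi(p)$ is an idempotent with $l(\phi(p))=p$, whence $p\phi(p)=\phi(p)$. To control its kernel I use $\phi(1)=1$ (a ring automorphism fixes the unit), so $\phi(1-p)=1-\phi(p)$ and $\ker\phi(p)=\operatorname{ran}(1-\phi(p))=\operatorname{ran}\phi(1-p)$, whose closure is the range of $l(\phi(1-p))=l(1-p)=1-p$. Thus $\ker\phi(p)=(1-p)H$, so $r(\phi(p))=p=l(\phi(p))$, and Lemma \ref{rl} forces $\phi(p)$ to be a projection, necessarily equal to its left support $p$. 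The point here is that one-sided (left-support) information for both $p$ and its complement $1-p$ pins down an otherwise possibly oblique idempotent.

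With projections fixed I would fix the matrix units $e_{ij}$: for $i\neq j$ the element $\tfrac12(e_{ii}+e_{jj}+e_{ij}+e_{ji})$ is a projection, so it is fixed, and additivity together with $\phi(e_{ii})=e_{ii}$ gives $\phi(e_{ij}+e_{ji})=e_{ij}+e_{ji}$; multiplying on the left and right by $e_{ii},e_{jj}$ yields $\phi(e_{ij})=e_{ij}$. The step I expect to be the main obstacle is then to rule out a twisting of the scalars, since a priori $\phi$ need not fix the central scalars $\lambda\cdot 1$, and fixing projections and matrix units does not by itself exclude this. The idea is that left supports detect slopes. For $\lambda\in\C$ set $c:=\phi(\lambda 1)$, a central element, and compute $\phi(e_{11}+\lambda e_{21})=\phi(e_{11})+\phi(\lambda 1)\phi(e_{21})=e_{11}+c\,e_{21}$. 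The range of $e_{11}+\lambda e_{21}$ is the ``slope-$\lambda$ subspace'' $\{(\eta,\lambda\eta,0,\ldots):\eta\}$ of the underlying Hilbert space, while that of $e_{11}+c\,e_{21}$ is $\{(\eta,c\eta,0,\ldots):\eta\}$; preservation of left supports forces these subspaces to coincide, hence $c=\lambda 1$. Therefore $\phi(\lambda 1)=\lambda 1$ for all $\lambda$, and $\phi(\lambda x)=\phi(\lambda 1)\phi(x)=\lambda\phi(x)$, so $\phi$ is complex-linear. This is where $n\geq 2$ is used, through the partial isometry $e_{21}$.

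Finally I would conclude. Because $\M_n(A)$ is itself an R$^*$-algebra, every element is a complex-linear combination of projections: writing $x=a+ib$ with $a,b$ self-adjoint, each of $a,b$ has finite spectrum and hence a spectral decomposition $\sum_k\lambda_k p_k$ with $\lambda_k\in\R$ and $p_k\in\P(\M_n(A))$ by Proposition \ref{operation}(2). Since $\phi$ is complex-linear and fixes every projection, $\phi(x)=x$, so $\phi$ is the identity. In summary, the only genuinely delicate point is the scalar step: the hypothesis is one-sided (it controls right ideals, equivalently left supports), and it must be leveraged through the slope computation to force complex-linearity; the remaining steps are routine manipulations with idempotents, supports, and functional calculus.
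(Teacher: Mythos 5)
Your unital argument is correct, and it takes a genuinely different route from the paper: for unital $A$ the paper simply quotes von Neumann's \cite[Theorem II.4.1]{N}, whereas you reprove that case from scratch. Each step checks out. From $x\M_n(A)=l(x)\M_n(A)$ and uniqueness of the generating projection you get $l(\phi(x))=l(x)$; the passage through $\phi(1-p)=1-\phi(p)$ correctly identifies $\ker\phi(p)$ with $(1-p)H^{\oplus n}$, so $r(\phi(p))=l(\phi(p))$ and Lemma \ref{rl} forces $\phi(p)=p$ (the paper invokes the same lemma, but reaches $r(\phi(p))\leq l(\phi(p))$ by a different device); the projection $\tfrac12(e_{ii}+e_{jj}+e_{ij}+e_{ji})$ fixes the matrix units; and the slope computation is the real content. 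One point of precision there: centrality of $c=\phi(\lambda 1)$ gives $c=z\otimes 1_n$ with $z$ central in $A$ (this uses commutation with the already-fixed matrix units), so the second range is $\{(\eta,z\eta,0,\ldots,0)\}$; both ranges are closed because the operators lie in the R$^*$-algebra $\M_n(A)$, and equality of left supports then forces $z=\lambda$. This graph-of-$\lambda$ argument is in substance the mechanism inside von Neumann's own uniqueness proof, so what your route buys is self-containedness, and it makes visible exactly where $n\geq2$ enters (through $e_{21}$; the hypothesis is necessary, since for $n=1$ a discontinuous field automorphism of $\C$ fixes both principal ideals of $\C$ without being the identity).

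The soft spot is the nonunital case, which you wave off as ``minor bookkeeping with local units'' but which is precisely where the paper's written proof does all of its work. Your projection-fixing step leans twice on the unit: on $\phi(1)=1$ and on the complement $1-p$. Neither exists without a unit, and you cannot instead begin by restricting to a corner $\M_n(qAq)$, $q\in\P(A)$, because that restriction presupposes $\phi(q\otimes 1_n)=q\otimes 1_n$ --- the very statement being proved. The paper's substitute is an orthogonality trick: for $q'\in\P(\M_n(A))$ with $pq'=0$ one has $\phi(p)q'\M_n(A)=\phi(pq'\M_n(A))=\{0\}$, whence $\phi(p)q'=0$ by regularity; applying this with $q'=r\bigl(\phi(p)-\phi(p)p\bigr)$ yields $\phi(p)=\phi(p)p$, i.e.\ $r(\phi(p))\leq p=l(\phi(p))$, and Lemma \ref{rl} again gives $\phi(p)=p$ with no unit in sight. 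Only after all projections are fixed does $\phi$ restrict to a ring automorphism of each unital corner $\M_n(qAq)$ satisfying the right-ideal hypothesis there, at which point your unital argument applies corner by corner and the conclusion follows because $\M_n(A)$ is the directed union of the corners. So your proof is completable, but the missing step is a small genuine idea rather than bookkeeping; Example \ref{nonunital} in the paper is a reminder that nonunital phenomena in this circle of results are not automatically harmless.
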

\begin{proof}
If $A$ is unital, then the desired conclusion is a special case of \cite[Theorem II.4.1]{N}. 
Let $p\in \P(\M_n(A))$. 
It is clear that $\phi(p)$ is an idempotent.
The equation $\phi(p\M_n(A))=p\M_n(A)$ implies $l(\phi(p))=p$. 
Moreover, for every $q\in \P(\M_n(A))$ with $pq=0$, we have $\phi(p)q\M_n(A) = \phi(p)\phi(q\M_n(A))=\phi(pq\M_n(A))=\{0\}$. 
Therefore, we have $\phi(p)q=0$ for every $q\in \P(\M_n(A))$ with $pq=0$. 
This means $r(\phi(p))\leq p=l(\phi(p))$.
The preceding lemma implies that $\phi(p)$ is a projection and hence $\phi(p)=l(\phi(p))=p$. 
It follows that $\phi$ restricts to a ring automorphism of $\M_n(qAq)$ for every $q\in \P(A)$.
Using the proposition for the unital case, we see that $\phi(x)=x$ for every $x\in \M_n(qAq)$. 
Since $\M_n(A)$ is the directed union of $(\M_n(qAq))_{q\in \P(A)}$, we have $\phi(x)=x$ for every $x\in \M_n(A)$.
\end{proof}

If $\phi$ and $\phi'$ are ring isomorphisms from $\M_n(A)$ onto $B$ that satisfy the condition of Theorem \ref{neumann}, then $\phi^{-1}\circ \phi'$ maps each principal right ideal of $\M_n(A)$ onto itself. 
Thus the preceding proposition implies $\phi^{-1}\circ \phi'=\operatorname{id}_{\M_n(A)}$, and $\phi'=\phi$.
A somewhat surprising fact is that we cannot drop the assumption of the existence of unit in Theorem \ref{neumann}.
\begin{example}\label{nonunital}
For an inner product space $V$, the lattice $\P(F(V))$ can be identified with the lattice of finite-dimensional subspaces of $V$ (ordered by inclusion). 
Therefore, if two inner product spaces $V, W$ have the same Hamel dimension, then $\P(F(V))$ is lattice isomorphic to $\P(F(W))$. 
Let $H$ be an infinite-dimensional Hilbert space, and let $f\colon H\to H$ be any linear bijection that is not bounded. 
Remark that $F(H)$ is $^*$-isomorphic to $\M_n(F(H))$ for every $n\geq 1$.
Since $f$ maps each finite-dimensional subspace of $H$ onto another finite-dimensional subspace of $H$, $f$ determines a lattice automorphism $\psi$ of the lattice of finite-dimensional subspaces of $H$, which is identified with $\P(F(H))$. 
Let us prove that there exists no ring automorphism $\phi\colon F(H)\to F(H)$ that satisfies $\phi(pF(H))=\psi(p)F(H)$ for all $p\in \P(F(H))$. 
Assume for a contradiction that there exists such a ring automorphism $\phi\colon F(H)\to F(H)$. 
By unboundedness of $f$, we may find a nonzero vector $h\in H$ such that $f(\{h\}^{\perp})$ is dense in $H$. 
Take the rank-one  projection $p\in \P(F(H))$ onto $\C h$. 
For each projection $q\in \P(F(H))$ of rank one with $pq=0$, we see that $\phi(q)\in F(H)$ is an idempotent with $\phi(p)\phi(q)=\phi(pq)=0$. 
However, the equation $\phi(qF(H))=\psi(q)F(H)$ implies $\phi(q) H = f(qH)$. 
It follows that  $\phi(p)|_{f(\{h\}^{\perp})}=0$. 
Since  $f(\{h\}^{\perp})$ is dense in $H$, the boundedness of $\phi(p)$ implies $\phi(p)=0$, a contradiction.
\end{example}

Recall that a mapping $\phi\colon A\to B$ between $^*$-algebras is called a \emph{Jordan $^*$-homomorphism} (resp.\ a \emph{Jordan $^*$-isomorphism}) if it is a linear mapping (resp. a linear bijection) such that $\phi(x)^*=\phi(x^*)$ and $\phi(x^2)=\phi(x)^2$ for every $x\in A$.
Clearly, if $A$ and $B$ are R$^*$-algebras, $\phi\colon A\to B$ is a Jordan $^*$-homomorphism and $x\in A$ is self-adjoint (resp.\ positive, a projection), then so is $\phi(x)\in B$.

Jordan $^*$-isomorphisms appear quite often in studying mappings between operator algebras that preserve certain structures. 
See Moln\'ar's recent article \cite{Mol} for more about Jordan $^*$-isomorphisms between C$^*$-algebras. 

\begin{example}
Let $A$ be the unitization of $\bigoplus_{n\in \Z}\M_2$, which is formed of all bi-infinite sequences $(x_n)_{n\in \Z}$ of $2\times 2$ matrices $x_n$ with the following property: There exists $N\geq 1$ such that $x_n=x_N\in \C \begin{pmatrix}1&0\\0&1\end{pmatrix}$ for all $n$ with $\lvert n\rvert\geq N$. 
Define $\phi\colon A\to A$ by $\phi((x_n)_{n\in \Z})= (y_n)_{n\in \Z}$, $y_n=x_n$ if $n\geq 0$, and $y_n$ is the transpose of $x_n$ otherwise.
Then $\phi$ is a Jordan $^*$-automorphism of $A$. 
\end{example}
\begin{proposition}\label{jordan}
Let $A, B$ be R$^*$-algebras. 
Let $\phi\colon A\to B$ be a linear map such that $\phi(\P(A))\subset\P(B)$.
Then $\phi$ is a Jordan $^*$-homomorphism. 
\end{proposition}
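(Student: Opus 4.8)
The plan is to establish the two identities $\phi(x^*)=\phi(x)^*$ and $\phi(x^2)=\phi(x)^2$ defining a Jordan $^*$-homomorphism, using throughout that an R$^*$-algebra is linearly spanned by its projections. Indeed, by Theorem \ref{equi} every self-adjoint $a\in A$ has finite spectrum, so by Proposition \ref{operation} (2) its spectral projections $e_1,\dots,e_m\in\P(A)$ are mutually orthogonal and $a=\sum_{k}\lambda_k e_k$ with $\lambda_k\in\R$. Thus each self-adjoint element is a real-linear combination of projections, and writing a general $x\in A$ as $x=a+ib$ with $a,b$ self-adjoint reduces everything to the behaviour of $\phi$ on projections and on their orthogonal sums.

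First I would check that $\phi$ is $^*$-preserving. Since $\phi$ maps each projection to a (self-adjoint) projection, the decomposition above shows that $\phi$ carries self-adjoint elements to self-adjoint elements. Hence for $x=a+ib$ with $a,b$ self-adjoint, $\phi(x)^*=\phi(a)-i\phi(b)=\phi(a-ib)=\phi(x^*)$, as desired.

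The crux is to show that $\phi$ sends orthogonal projections to orthogonal projections. If $p,q\in\P(A)$ satisfy $pq=0$, then $p+q\in\P(A)$, so $\phi(p)+\phi(q)=\phi(p+q)$ is a projection in $B$; writing $r=\phi(p)$, $s=\phi(q)$, the identity $(r+s)^2=r+s$ forces $rs+sr=0$, and multiplying this relation on the left and on the right by $r$ and subtracting yields $rs=sr$, whence $rs=sr=0$. This is the one genuinely nontrivial step, and I expect it to be the main point of the argument; everything else is bookkeeping built on top of it.

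With orthogonality preserved, the identity $\phi(a^2)=\phi(a)^2$ for self-adjoint $a$ is immediate from the spectral decomposition: with $a=\sum_k\lambda_k e_k$ as above one has $a^2=\sum_k\lambda_k^2 e_k$, and since $\phi(e_k)\phi(e_l)=\delta_{kl}\phi(e_k)$ the computation $\phi(a)^2=\sum_{k,l}\lambda_k\lambda_l\phi(e_k)\phi(e_l)=\sum_k\lambda_k^2\phi(e_k)=\phi(a^2)$ goes through. Finally I would polarize: applying the self-adjoint identity to $a+b$ gives $\phi(ab+ba)=\phi(a)\phi(b)+\phi(b)\phi(a)$ for all self-adjoint $a,b$, and then expanding $x^2=a^2-b^2+i(ab+ba)$ for $x=a+ib$ and comparing with $\phi(x)^2=(\phi(a)+i\phi(b))^2$ yields $\phi(x^2)=\phi(x)^2$ for every $x\in A$.
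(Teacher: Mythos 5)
Your proof is correct and follows essentially the same route as the paper's: orthogonality preservation on projections, spectral decomposition of self-adjoint elements into real-linear combinations of mutually orthogonal projections, the square identity for self-adjoint elements, and polarization via $(a+b)^2$ to reach general $x=a+ib$. The only difference is that you spell out the orthogonality step ($rs+sr=0$ forcing $rs=sr=0$) which the paper dismisses as ``easy to check,'' and your computation there is valid.
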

\begin{proof}
It is easy to check that $\phi$ maps mutually orthogonal projections to mutually orthogonal projections.
Since every self-adjoint operator $a$ in $A$ can be written as a real-linear combination of mutually orthogonal projections, the linearity of  $\phi$ implies that $\phi(a)^*=\phi(a)$ and $\phi(a^2)=\phi(a)^2$.
It follows that $\phi$ preserves involution. 
If $a, b$ are self-adjoint, then so is $a+b$, hence we have $\phi((a+b)^2)=\phi(a+b)^2$, which together with $\phi(a^2)=\phi(a)^2$ and $\phi(b^2)=\phi(b)^2$ implies that $\phi(ab+ba)=\phi(a)\phi(b)+\phi(b)\phi(a)$. 
Thus we obtain 
\[
\begin{split}
\phi((a+ib)^2) &= \phi(a^2)+i\phi(ab+ba)-\phi(b^2)\\
&=\phi(a)^2+i(\phi(a)\phi(b)+\phi(b)\phi(a))-\phi(b)^2\\
&=\phi(a+ib)^2.
\end{split}
\]
\end{proof}

By strengthening the assumption of Theorem \ref{neumann}, we obtain a Feldman--Dye type result. 
Let $A, B$ be R$^*$-algebras. 
A bijection $\psi\colon \P(A)\to \P(B)$ is called an orthoisomorphism if $pq=0\iff \psi(p)\psi(q)=0$ for every pair $p,q\in \P(A)$. 
It is easy to verify that an orthoisomorphism $\psi\colon \P(A)\to\P(B)$ is a lattice isomorphism and $\psi(p^{\perp})=\psi(p)^{\perp}$ if $A$ and $B$ are unital, so this definition is consistent with the definition of orthoisomorphism in Section \ref{com}.
Dye \cite{Dy} (see also Feldman's \cite{Fe}) proved that every orthoisomorphism between the projection lattices of two von Neumann algebras extends to a Jordan $^*$-isomorphism if one of the von Neumann algebras is without type I$_2$ direct summands. 
To obtain this theorem, Dye needed to appropriately modify the proof of von Neumann's \cite[Theorem II.4.2]{N}. 
In the setting of R$^*$-algebras, we may directly apply Theorem \ref{neumann} to consider orthoisomorphisms of the projection lattices.
The proof below is partly based on \cite[Proof of Theorem 3]{Fe}.
It should be worth mentioning that we do not need the assumption of the existence of unit in this theorem.

\begin{theorem}[Dye's theorem for R$^*$-algebras]\label{dye}
Let $A, B$ be  R$^*$-algebras. 
If $\phi\colon A\to B$ is a Jordan $^*$-isomorphism, then it restricts to an orthoisomorphism from $\P(A)$ onto $\P(B)$.
Let $n\geq 3$.
If $\psi\colon \P(\M_n(A))\to \P(B)$ is an orthoisomorphism, then there exists a unique Jordan $^*$-isomorphism $\phi\colon \M_n(A)\to B$ such that $\phi(p)=\psi(p)$ for all $p\in \P(\M_n(A))$. 
\end{theorem}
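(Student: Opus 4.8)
The statement splits into a routine direction and a substantial converse, and my plan is to treat them separately. For the easy direction, suppose $\phi\colon A\to B$ is a Jordan $^*$-isomorphism. Since $\phi(x^2)=\phi(x)^2$ and $\phi(x^*)=\phi(x)^*$, it carries projections to projections, and so does $\phi^{-1}$; hence $\phi$ restricts to a bijection $\P(A)\to\P(B)$. For projections $p,q$ one has $pq=0$ exactly when $p+q$ is again a projection, so by linearity together with projection-preservation in both directions, $pq=0\iff\phi(p)+\phi(q)=\phi(p+q)\in\P(B)\iff\phi(p)\phi(q)=0$, which is precisely the orthoisomorphism property.

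For the converse I would first treat the case where $\M_n(A)$ and $B$ are unital ($B$ is unital because the orthoisomorphism $\psi$ carries the greatest projection to the greatest projection). An orthoisomorphism is a lattice isomorphism, so Theorem~\ref{neumann} produces a \emph{ring} isomorphism $\phi\colon\M_n(A)\to B$ with $\phi(p\M_n(A))=\psi(p)B$, and in particular $l(\phi(p))=\psi(p)$. The idempotent $\phi(p)$ need not be self-adjoint, and this is exactly where the orthocomplementation is used: from $\phi(1-p)=1-\phi(p)$ and $\psi(1-p)=\psi(p)^{\perp}$ one reads off $r(\phi(p))=l(\phi(p))=\psi(p)$, so Lemma~\ref{rl} makes $\phi(p)$ a projection and forces $\phi(p)=\psi(p)$. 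Moreover the map $\phi^{\sharp}(x):=\phi(x^*)^*$ is again a ring isomorphism with $\phi^{\sharp}(p\M_n(A))=\psi(p)B$; applying the uniqueness Proposition~\ref{uniqueri} to $\phi^{-1}\circ\phi^{\sharp}$ yields $\phi^{\sharp}=\phi$, i.e.\ $\phi$ is $^*$-preserving.

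The main obstacle is that a ring isomorphism need only be additive and multiplicative, hence possibly semilinear rather than $\C$-linear. Because $\phi$ is now $^*$-preserving, $J:=\phi(i1)$ is central, skew-adjoint and satisfies $J^2=-1$, so $q:=\tfrac12(1-iJ)$ is a central projection with $Jq=iq$ and $J(1-q)=-i(1-q)$; with $e:=\phi^{-1}(q)$, the map $\phi$ is $\C$-linear on $e\M_n(A)$ and conjugate-linear on $(1-e)\M_n(A)$. (That $\phi$ fixes the real scalars, $\phi(s1)=s1$ for $s\in\R$, follows since $^*$-preservation makes $s\mapsto\phi(s1)$ an order-preserving ring map fixing $\Q$, which in the commutative R$^*$-algebra $Z(B)$ is pinned to $s1$.) To cure the conjugate-linear summand I would twist by the adjoint there: let $\alpha(x):=ex+(1-e)x^*$, an involutive bijection of $\M_n(A)$, and set $\Psi:=\phi\circ\alpha$. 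One checks that the two conjugations cancel on the $(1-e)$-part, so $\Psi$ is $\C$-linear; it is bijective, and $\Psi(p)=\phi(ep+(1-e)p)=\phi(p)=\psi(p)$, so $\Psi$ sends projections to projections. By Proposition~\ref{jordan} a linear projection-preserving map is automatically a Jordan $^*$-homomorphism, so $\Psi$ is the desired Jordan $^*$-isomorphism extending $\psi$. Uniqueness is then immediate, since any two Jordan $^*$-isomorphisms agreeing on $\P$ agree on self-adjoint elements by linearity, hence everywhere.

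Finally I would remove unitality by reduction to the case just settled. With $f_q:=\operatorname{diag}(q,\dots,q)$ for $q\in\P(A)$, one has $\M_n(A)=\bigcup_q f_q\M_n(A)f_q=\bigcup_q\M_n(qAq)$, a directed union of unital R$^*$-algebras whose units $f_q$ are cofinal in $\P(\M_n(A))$. The orthoisomorphism $\psi$ restricts to orthoisomorphisms $\P(\M_n(qAq))\to\P(\psi(f_q)B\psi(f_q))$, the unital case gives a Jordan $^*$-isomorphism on each corner, and these cohere by the uniqueness just proved; cofinality of $(\psi(f_q))_q$ in $\P(B)$ then yields a Jordan $^*$-isomorphism of $\M_n(A)$ onto all of $B$. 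The genuinely hard step remains the passage from the merely additive–multiplicative $\phi$ to a $\C$-linear map, which is accomplished by combining the $^*$-preservation extracted from uniqueness with the adjoint twist that converts the conjugate-linear summand into a linear, anti-multiplicative Jordan piece.
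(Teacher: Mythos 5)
Your proposal is correct and follows essentially the same route as the paper: von Neumann's Theorem \ref{neumann} plus Lemma \ref{rl} to identify $\phi(p)=\psi(p)$, Proposition \ref{uniqueri} applied to $x\mapsto\phi^{-1}(\phi(x^*)^*)$ to obtain $^*$-preservation, Proposition \ref{jordan} to get the Jordan property of the linearized map, and the directed union of corners $\M_n(qAq)$ to remove unitality. Your linearization via the central projection $q=\tfrac12(1-i\phi(i))$ and the adjoint twist $\alpha(x)=ex+(1-e)x^*$ is only a repackaging of the paper's direct definition $\Phi(a+ib):=\phi(a)+i\phi(b)$ for self-adjoint $a,b$ --- one checks the two maps coincide.
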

\begin{proof} 
It is clear that a Jordan $^*$-isomorphism restricts to an orthoisomorphism between the projection lattices.
In what follows, let $n\geq 3$ and suppose that $\psi\colon \P(\M_n(A))\to \P(B)$ is an orthoisomorphism.

Assume that $A$ is unital. 
Then $B$ is also unital because $\psi$ preserves the maximal projection.
Since $\psi$ is a lattice isomorphism, there exists a ring isomorphism $\phi\colon \M_n(A)\to B$ such that $\phi(p\M_n(A))=\psi(p)B$ for all $p\in \P(\M_n(A))$. 
Let $p\in \P(\M_n(A))$. 
It is clear that $\phi(p)$ is an idempotent. 
The equation $\phi(p^{\perp}\M_n(A))=\psi(p^{\perp})B = \psi(p)^{\perp}B$ implies that $l(\phi(p^{\perp}))=\psi(p)^{\perp}$. 
Since $pp^{\perp}=0$, we have  $\phi(p)\phi(p^{\perp}) = 0$. 
Thus we have $0=r(\phi(p))l(\phi(p^{\perp}))=r(\phi(p))\psi(p)^{\perp}$ and hence $r(\phi(p))\leq \psi(p)=l(\phi(p))$.
By Lemma \ref{rl}, we see that $\phi(p)$ is a projection, and we have $\phi(p)=l(\phi(p)) =\psi(p)\in \P(B)$.

Consider the ring automorphism $x\mapsto \phi^{-1}(\phi(x^*)^*)$ of $\M_n(A)$. 
This fixes every projection, hence Proposition \ref{uniqueri} implies that $x= \phi^{-1}(\phi(x^*)^*)$, or equivalently, $\phi(x)^*=\phi(x^*)$ for each $x\in \M_n(A)$.
It follows that $\phi$ maps the self-adjoint part of $\M_n(A)$ onto that of $B$. 
Since $\phi$ preserves squares, $\phi$ restricted to self-adjoint parts is an order isomorphism. 
It follows that $\phi(t)=t$ for all $t\in \R$. 
Define the complex-linear map $\Phi\colon A\to B$ by $\Phi(a+ib)=\phi(a)+i\phi(b)$ for self-adjoint $a, b\in A$. 
By Proposition \ref{jordan}, $\Phi$ is a Jordan $^*$-homomorphism that extends $\psi$.
Since projections of $B$ linearly span $B$, $\Phi$ is surjective.

If $A$ is not unital, then for each $p\in \P(A)$, $\psi$ restricts to an orthoisomorphism from $\P(\M_n(pAp))$ onto $\P(\psi(p\otimes 1)B\psi(p\otimes 1))$. 
Since $pAp$ is a unital R$^*$-algebra, it follows that $\psi$ restricted to $\P(\M_n(pAp))$ extends to a Jordan $^*$-isomorphism from $\M_n(pAp)$ onto $\psi(p\otimes 1)B\psi(p\otimes 1)$. 
Moreover, since $\P(\M_n(pAp))$ linearly spans $\M_n(pAp)$, this is the unique linear extension of $\psi|_{\P(\M_n(pAp))}$.
Since $\M_n(A)$ is the directed union of $(\M_n(pAp))_{p\in \P(A)}$, it follows that $\psi$ extends to a linear mapping, and this extension is a Jordan $^*$-isomorphism from $\M_n(A)$ onto $B$.
The uniqueness of the extension is again the consequence of the fact that an R$^*$-algebra is linearly spanned by projections.
\end{proof}

The assumption $n\geq 3$ is essential in  Theorem \ref{dye} (and Theorem \ref{neumann}).
Indeed, it is easy to find wild examples of orthoisomorphisms from $\P(\M_2)$ onto itself.

A \emph{$^*$-antiisomorphism} $\phi\colon A\to B$ between $^*$-algebras is a complex-linear bijection with $\phi(x^*)=\phi(x)^*$, $\phi(xy)=\phi(y)\phi(x)$ for every pair $x, y\in A$. 
Clearly, a $^*$-antiisomorphism is a Jordan $^*$-isomorphism.
Theorem \ref{dye} motivates the following question:
How far are Jordan $^*$-isomorphisms from $^*$-isomorphisms? 
In particular, how different are $^*$-antiisomorphisms from $^*$-isomorphisms?
{A m}ore specific question is:
Is there a pair of mutually $^*$-antiisomorphic R$^*$-algebras that are not $^*$-isomorphic, or equivalently, is there an R$^*$-algebra that is not $^*$-isomorphic to its opposite R$^*$-algebra?
Here, for an R$^*$-algebra $A$, its \emph{opposite R$^*$-algebra} $A^{\mathrm{op}}$ is defined in the following manner. 
As a complex normed space $A^{\mathrm{op}}$ is equal to $A$. 
For each $x\in A$ we write the corresponding element in $A^{\mathrm{op}}$ as $x^{\mathrm{op}}$ in order to distinguish $A$ from $A^{\mathrm{op}}$. 
We define the $^*$-algebra structure of $A^{\mathrm{op}}$ by $(x^{\mathrm{op}})^* := (x^*)^{\mathrm{op}}$ and $x^{\mathrm{op}}y^{\mathrm{op}} := (yx)^{\mathrm{op}}$ for $x, y\in A$. 
Then it is easy to show that $A^{\mathrm{op}}$ forms an R$^*$-algebra that is $^*$-antiisomorphic to $A$.

It is a trivial fact that every commutative R$^*$-algebra is $^*$-isomorphic to its opposite.
By the Elliott classification theorem{,} it is plain that every sequentially ultramatricial R$^*$-algebra is $^*$-isomorphic to its opposite. 
How about general R$^*$-algebras?
Banach space theory gives an answer. 
For a complex normed space $X$, its \emph{complex conjugate normed space} $\overline{X}$ is defined in the following manner. 
As a real normed space $\overline{X}$ is equal to $X$. 
For each $v\in X$ we write the corresponding element in $\overline{X}$ as $\overline{v}$ in order to distinguish $X$ from $\overline{X}$. 
We define the complex-linear structure of $\overline{X}$ by $\lambda \overline{v}:= \overline{\overline{\lambda}v}$ $v\in X$, $\lambda\in \C$. 

\begin{proposition}\label{opposite}
There is a purely atomic simple separable R$^*$-algebra $F(V)$ that is not $^*$-isomorphic to its opposite.
\end{proposition}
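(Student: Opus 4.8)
The plan is to reduce the claim to the known existence of a separable complex Banach space not isomorphic to its complex conjugate. The first step is to identify the opposite algebra. Given a complex inner product space $V$ with completion $H$, write $\overline{V}$ for the \emph{conjugate} inner product space (the same additive group, with conjugated scalar multiplication and inner product), whose completion is the conjugate Hilbert space $\overline{H}$. I would verify that the adjoint operation $x\mapsto x^{*}$, regarded as a map taking values in the bounded operators on $\overline{H}$, is a complex-\emph{linear} $^*$-antiisomorphism $F(V)\to F(\overline{V})$: it carries $\operatorname{ran} x,\operatorname{ran} x^{*}\subset V$ to ranges lying in $\overline{V}$; it is antimultiplicative since $(xy)^{*}=y^{*}x^{*}$; and the identities $(\lambda x)^{*}=\overline{\lambda}\,x^{*}$, together with the fact that the $\overline{H}$-adjoint of an operator coincides with its $H$-adjoint, show that it is complex-linear for the conjugate structure and $^*$-preserving. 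Precomposing with the canonical $^*$-antiisomorphism $F(V)^{\mathrm{op}}\to F(V)$, $x^{\mathrm{op}}\mapsto x$, then gives a $^*$-isomorphism $F(V)^{\mathrm{op}}\cong F(\overline{V})$. Consequently $F(V)$ is $^*$-isomorphic to its opposite if and only if $F(V)\cong F(\overline{V})$.

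Second, I would transfer this to Banach spaces. Fix a separable infinite-dimensional complex Banach space $X$ and, via Proposition \ref{range}(1), an injective bounded operator $T\colon X\to\ell_{2}$; put $V:=TX$ and let $H$ be the closure of $V$, a separable Hilbert space. The conjugate $\overline{V}$ is precisely the range of the injective bounded operator $\overline{T}\colon\overline{X}\to\overline{H}$ determined by the same underlying map, so the Banach space attached to $\overline{V}$ in the sense of Proposition \ref{range} is $\overline{X}$. Applying Proposition \ref{range}(2) to the pairs of data $(X,H,T)$ and $(\overline{X},\overline{H},\overline{T})$, a $^*$-isomorphism $F(V)\cong F(\overline{V})$ would force a Banach-space isomorphism $X\cong\overline{X}$. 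Hence, choosing $X$ with $X\not\cong\overline{X}$ yields $F(V)\not\cong F(\overline{V})\cong F(V)^{\mathrm{op}}$, as desired. A separable such $X$ exists by Bourgain's construction of a complex Banach space not isomorphic (though it is automatically real-isomorphic) to its complex conjugate.

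Finally I would confirm that $F(V)$ has the advertised properties. It is purely atomic by Theorem \ref{vi}, being of the form $F(V)$ for a single inner product space, and it is separable because $X$ separable makes $V=TX$, hence $H$, separable. Simplicity is direct: if $I\neq\{0\}$ is a two-sided ideal and $0\neq x\in I$, pick $\xi_{0},\eta_{0}\in V$ with $\langle x\eta_{0},\xi_{0}\rangle\neq0$; sandwiching $x$ between the rank-one operators $h\mapsto\langle h,\xi_{0}\rangle\xi$ and $h\mapsto\langle h,\eta\rangle\eta_{0}$ (both in $F(V)$) produces, up to the nonzero scalar $\langle x\eta_{0},\xi_{0}\rangle$, every rank-one operator $h\mapsto\langle h,\eta\rangle\xi$ with $\xi,\eta\in V$, so $I=F(V)$. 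The only substantial input is the Banach-space phenomenon $X\not\cong\overline{X}$; I expect the main obstacle to be the careful handling of the conjugate Hilbert- and Banach-space structures in the first two steps, in particular checking that the adjoint map genuinely lands in $F(\overline{V})$ and is complex-linear there, so that Theorem \ref{wigner} and Proposition \ref{range} can be invoked to pin down the conjugate space as $\overline{X}$.
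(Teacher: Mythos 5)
Your proposal is correct and follows essentially the same route as the paper: the identification $F(V)^{\mathrm{op}}\cong F(\overline{V})$ (your adjoint map into $B(\overline{H})$ is exactly the paper's $x^{\mathrm{op}}\mapsto\overline{x}^{\,*}$), the transfer to Banach spaces via Proposition \ref{range}, and Bourgain's (or Kalton's) example of a separable $X$ with $X\not\cong\overline{X}$. Your explicit verifications of simplicity and separability of $F(V)$, which the paper leaves implicit, are a harmless addition.
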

\begin{proof}
It is well-known (see for example \cite{Bo, K}) that there exists a separable Banach space $X$ that is not isomorphic (as complex Banach space) to its complex conjugate $\overline{X}$. 
For such an $X$, take an injective bounded linear operator $T\colon X\to \ell_2$ with dense range  (see Proposition \ref{range}), and set $V:= TX$. 
Consider the complex conjugate $\overline{T}\colon \overline{X} \to \overline{\ell_2}$ of $T$ (defined by $\overline{T}(\overline{v}):= \overline{T(v)},\,\, v\in X$), which is clearly a bounded linear operator with dense range, and observe that the range is canonically identified with $\overline{V}$. 
It is easy to see that the opposite R$^*$-algebra of $F(V)$ is $^*$-isomorphic to $F(\overline{V})$ by the mapping $x^{\mathrm{op}} \mapsto \overline{x}^*$, $x\in F(V)$. 
If $F(V)$ is $^*$-isomorphic to its opposite, then it follows by Proposition \ref{range} that $X$ and $\overline{X}$ are isomorphic as complex Banach spaces, which contradicts our choice of $X$. 
\end{proof}

The difference between an algebra and its opposite for operator algebras has been considered by many hands.
In the setting of von Neumann algebras, Connes \cite{C} discovered an example of a II$_1$ factor which is not $^*$-isomorphic to its opposite, giving a negative solution to Problem 4 of Kadison's list \cite{Ge}. 
By now many examples of C$^*$-algebras that are not $^*$-isomorphic to their opposites are known.
However, it remains as an open problem whether there exists a simple nuclear separable C$^*$-algebra that is not $^*$-isomorphic to its opposite. 
The case of  AF C$^*$-algebras is still under investigation in the literature. 
Indeed, in \cite{FH} Farah and Hirshberg proved the existence of a (nonseparable) AF C$^*$-algebra that is not $^*$-isomorphic to its opposite under some set{-}theoretic axiom that is consistent with ZFC. 
It is an open problem whether such an example can be constructed inside ZFC.
See \cite{FH} for more details about the state-of-the-art on this topic.

Let $A$ be an R$^*$-algebra or a von Neumann algebra. 
A function $\mu\colon \P(A)\to \R$ is called a \emph{finitely additive} (signed) \emph{measure} if it satisfies
\[
\mu(p+q) = \mu(p)+\mu(q)
\]
for every pair of mutually orthogonal projections $p, q\in\P(A)$. 

Mackey's problem asks whether a finitely additive measure extends to a linear mapping under a suitable assumption. 
This question is motivated by the theory of quantum logic. 
Gleason proved that every positive countably-additive measure on the projection lattice of $B(H)$, except for the case $H$ is $2$-dimensional,  extends to a bounded normal functional \cite{Gle}. 
More generally, it was proved by Bunce and Wright that every finitely additive measure with bounded range extends uniquely to a bounded linear functional in the setting of general von Neumann algebras without type I$_2$ direct summands \cite{BW1, BW2}.
We give a Gleason-type result for a certain class of R$^*$-algebras. 
Let $A$ be an ultramatricial R$^*$-algebra.
We say $A$ satisfies the property (N2) if $A$ can be written as the union of an increasing net $(A_i)_{i\in I}$ of finite-dimensional R$^*$-algebras such that no $A_i$ has $\M_2$ as its direct summand. 

Let $A$ be an R$^*$-algebra and $\mu$ a finitely additive measure on $\P(A)$.
Since every operator $x\in A$ has cartesian decomposition $x=a+bi$ with $a, b\in A$ self-adjoint, $x$ is written as a finite linear combination of projections. 
Thus the uniqueness of linear extension of a finitely additive measure is clear. 
If in addition $\lVert x\rVert\leq 1$, then  $0\leq a_{\pm}, b_{\pm}\leq 1$ (see Proposition \ref{operation} (2)) and they are in the convex hull of $\P(A)$. 
If $\mu$ extends to a linear functional $f$ on $A$, then we have $f(x) = f(a_+) -f(a_-) +if(b_+) -if(b_-)$, and $f(a_+), f(a_-), f(b_+), f(b_-)$ belong to the convex hull of $\mu(\P(A))$. 
Therefore, if $\mu(\P(A))$ is a bounded subset of $\R$ then the linear extension, if exists,  is bounded. 

\begin{theorem}[Gleason's theorem for ultramatricial R$^*$-algebras]\label{gleason}
Let $A$ be an ultramatricial R$^*$-algebra with (N2).
If $\mu\colon \P(A)\to \R$ is a finitely additive measure such that $\mu(\P(A))$ is a bounded subset of $\R$, then $\mu$ extends uniquely to a bounded linear functional on $A$. 
\end{theorem}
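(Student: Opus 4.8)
The plan is to reduce to the finite-dimensional pieces of $A$ and apply Gleason's theorem blockwise, then glue the resulting functionals by the uniqueness already established. Indeed, the remarks preceding this statement take care of two of the three assertions: since $A$ is linearly spanned by its projections, any linear extension of $\mu$ is unique; and since each norm-one element decomposes (via the positive/negative parts furnished by Proposition \ref{operation} (2)) into a combination of elements whose values under any linear extension lie in the convex hull of the \emph{bounded} set $\mu(\P(A))$, such an extension is automatically bounded. So the real task is existence.

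First I would fix a presentation $A=\bigcup_{i\in I}A_i$ witnessing (N2), so that each $A_i$ is a finite-dimensional R$^*$-algebra with no $\M_2$ summand; thus $A_i\cong\bigoplus_j \M_{n_{ij}}$ with every $n_{ij}\in\{1\}\cup\{3,4,\ldots\}$. Restricting $\mu$ to $\P(A_i)$ yields a finitely additive measure, and this splits over the summands: writing $p\in\P(A_i)$ as the orthogonal sum of its components $p_j$ in the blocks $\M_{n_{ij}}$, finite additivity gives $\mu(p)=\sum_j \mu_{ij}(p_j)$, where each $\mu_{ij}$ is a finitely additive measure on $\P(\M_{n_{ij}})$.

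Next I would produce a linear extension on each block. For a one-dimensional block $\M_1=\C$ this is immediate. For a block $\M_n$ with $n\geq 3$ this is precisely the content of Gleason's theorem \cite{Gle}: a finitely additive measure on the projection lattice of $B(\C^n)$ with $n\geq 3$ has the form $p\mapsto\operatorname{Tr}(Tp)$ for a self-adjoint $T$, and $x\mapsto\operatorname{Tr}(Tx)$ is the (unique) complex-linear extension. Summing the block extensions gives a linear functional $f_i$ on $A_i$ with $f_i|_{\P(A_i)}=\mu|_{\P(A_i)}$. The exclusion of $\M_2$ blocks is essential here, since Gleason's theorem genuinely fails in dimension two; in this proof (N2) is exactly the device that sidesteps the dimension-two obstruction, and this blockwise invocation of Gleason's theorem is the one genuinely hard input.

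Finally I would glue. For $A_i\subset A_j$ the restriction $f_j|_{A_i}$ is a linear extension of $\mu|_{\P(A_i)}$ on $A_i$, so by the uniqueness noted above $f_j|_{A_i}=f_i$; hence the $f_i$ cohere and define a single linear functional $f$ on $A=\bigcup_i A_i$ with $f|_{\P(A)}=\mu$. Boundedness of $f$ then follows from the convex-hull estimate recorded before the theorem, and uniqueness from the fact that $A$ is spanned by projections, completing the argument. The only step I expect to require real work beyond bookkeeping is the appeal to Gleason's theorem on the $\geq 3$-dimensional blocks; the reduction, the splitting over summands, and the gluing are all routine once (N2) has removed the troublesome $\M_2$ factors.
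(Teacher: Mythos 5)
Your proof is correct and follows essentially the same route as the paper: restrict $\mu$ to the finite-dimensional algebras $A_i$ of an (N2) presentation, extend linearly on each $A_i$ by the finite-dimensional Gleason theorem, glue the extensions via uniqueness (each $A_i$ is linearly spanned by its projections), and get boundedness and global uniqueness from the observation recorded just before the theorem. The only cosmetic difference is that the paper delegates the finite-dimensional step to \cite[Section 2]{BW1} (which in particular covers bounded \emph{signed} measures), whereas you unpack it blockwise over the summands $\M_{n_{ij}}$ and invoke Gleason's original result directly --- tacitly using that in finite dimensions a bounded signed finitely additive measure reduces to a nonnegative one, e.g.\ by adding a large multiple of $p\mapsto\operatorname{rank}(p)$.
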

\begin{proof}
Assume that $A$ is the union of an increasing net $(A_i)_{i\in I}$ of finite-dimensional R$^*$-algebras such that no $A_i$ has $\M_2$ as its direct summand. 
Then Gleason's theorem for a finite-dimensional von Neumann algebra (see \cite[Section 2]{BW1}, which is based on Gleason's result \cite{Gle}) implies that the map $\mu$ restricted to each $\P(A_i)$ extends uniquely to a linear mapping on $A_i$. 
It follows that $\mu$ extends uniquely to a linear mapping on $A$, and the extension is bounded by the observation preceding this theorem.
\end{proof}

\begin{corollary}
Let $A$ and $B$ be R$^*$-algebras. 
Assume that $A$ is ultramatricial and satisfies (N2). 
If $\psi\colon \P(A)\to \P(B)$ is a map such that $\psi(p)+\psi(q)=\psi(p+q)$ for every pair of mutually orthogonal projections $p, q\in \P(A)$, then $\psi$ uniquely extends to a Jordan $^*$-homomorphism $\phi\colon A\to B$. 
\end{corollary}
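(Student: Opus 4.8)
The plan is to reduce this $B$-valued statement to the scalar Gleason theorem (Theorem \ref{gleason}) by pairing with functionals, and then to invoke Proposition \ref{jordan}. Applying the additivity hypothesis to $p=q=0$ shows $\psi(0)=0$, and since every projection has norm at most $1$, the set $\psi(\P(A))$ is bounded in $B$.

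The heart of the matter is to construct a complex-linear extension $\phi\colon A\to B$ of $\psi$. For each bounded linear functional $g\in B^{*}$, the composite $g\circ\psi\colon\P(A)\to\C$ is additive on orthogonal projections, because $g$ is linear and $\psi$ is orthoadditive, and it has bounded range. Taking real and imaginary parts gives two real-valued finitely additive measures on $\P(A)$ with bounded range, so Theorem \ref{gleason} extends each uniquely to a bounded complex-linear functional on $A$; recombining them, $g\circ\psi$ extends uniquely to a bounded linear functional $h_{g}$ on $A$ satisfying $h_{g}(p)=g(\psi(p))$ for every $p\in\P(A)$. Since an R$^{*}$-algebra is linearly spanned by its projections, every $x\in A$ can be written as a finite combination $x=\sum_{j}c_{j}q_{j}$ with $c_{j}\in\C$ and $q_{j}\in\P(A)$, and I would set $\phi(x):=\sum_{j}c_{j}\psi(q_{j})\in B$.

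The one nonroutine point, and the main obstacle, is the well-definedness of $\phi$: the value must not depend on the chosen representation of $x$. Here the family $\{h_{g}\}_{g\in B^{*}}$ supplies exactly the needed rigidity. If $\sum_{j}c_{j}q_{j}=\sum_{l}d_{l}r_{l}$, then for every $g\in B^{*}$ one has $g(\sum_{j}c_{j}\psi(q_{j}))=\sum_{j}c_{j}h_{g}(q_{j})=h_{g}(\sum_{j}c_{j}q_{j})=h_{g}(\sum_{l}d_{l}r_{l})=g(\sum_{l}d_{l}\psi(r_{l}))$, using the complex-linearity of $h_{g}$ and of $g$ together with $h_{g}(p)=g(\psi(p))$. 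As bounded functionals separate the points of the normed space $B$ by the Hahn--Banach theorem, the two sums coincide in $B$. Hence $\phi$ is a well-defined linear map with $\phi(p)=\psi(p)$, so in particular $\phi(\P(A))\subset\P(B)$.

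Finally, Proposition \ref{jordan} immediately upgrades the linear, projection-preserving map $\phi$ to a Jordan $^{*}$-homomorphism, which settles existence. Uniqueness is then automatic: any linear extension of $\psi$ --- in particular any Jordan $^{*}$-homomorphic one --- is determined by its values on projections and must therefore agree with $\phi$ throughout $A$.
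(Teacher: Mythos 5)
Your proof is correct and takes essentially the same route as the paper: compose $\psi$ with bounded linear functionals $g$ on $B$, apply Theorem \ref{gleason} to extend each $g\circ\psi$ to a bounded linear functional on $A$, use the resulting family of functionals together with Hahn--Banach separation to see that $\sum_k z_k p_k=0$ forces $\sum_k z_k\psi(p_k)=0$ (i.e.\ the linear extension $\phi$ is well defined), and finish with Proposition \ref{jordan}. You merely make explicit two points the paper leaves implicit --- the real/imaginary decomposition needed to apply the real-valued Gleason-type theorem, and the separation argument behind the paper's ``we may easily prove'' --- so there is no substantive difference.
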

\begin{proof}
For each bounded linear functional $f$ on $B$, the preceding theorem implies that the map $f\circ \psi\colon  \P(A)\to \C$ extends uniquely to a bounded linear functional of $A$. 
As a consequence, we may easily prove the following:
If complex numbers $z_1, z_2, \ldots, z_n\in \C$ and projections $p_1, \ldots, p_n\in \P(A)$ satisfy $\sum_{k=1}^n z_kp_k=0$, then $\sum_{k=1}^n z_k\psi(p_k)=0$. 
It follows that $\psi$ extends (uniquely) to a linear mapping $\phi\colon A\to B$. 
By Proposition \ref{jordan}, we conclude that $\phi$ is a Jordan $^*$-homomorphism.
\end{proof}

\section{Questions}\label{question}
In this section{,} we propose naive questions on R$^*$-algebras. 
These questions are not complete: some of them might be trivially easy to solve, and some might be totally meaningless to consider, but some should have {essential} importance.

First of all, the author suspects that the theory of R$^*$-algebras falls into one of the following two possibilities: 
\begin{itemize}
\item If every R$^*$-algebra is locally finite-dimensional or ultramatricial, then chances are that we can capture a lot about the structure of general R$^*$-algebras. Indeed, we already have a complete classification result of locally finite-dimensional R$^*$-algebras with countable Hamel dimension (Section \ref{countable}).
\item If not, then the theory of R$^*$-algebras should be so deep that the full understanding might be far out of the reach. 
That is because once we get one wild example of an R$^*$-algebra we may easily construct many other wild examples by Section \ref{example}.
In that case{,} R$^*$-algebras would be all the more interesting as a new area of ``noncommutative'' mathematics.
\end{itemize}

\begin{question}
Which of the above two is true?
If the first option is true, then is it possible to give a classification result that applies to a class broader than that in Section \ref{countable}?
(For example, to the class of separable R$^*$-algebras under some appropriate condition?)
If the second option is true, is it still possible to give a satisfactory classification theorem that is valid for all R$^*$-algebras with countable Hamel dimension?
\end{question}
Sz\H{u}cs and Tak\'acs proved that there exists a finitely generated infinite-dimensional R$^*$-algebra if and only if there exists a singly generated infinite-dimensional R$^*$-algebra $A$ (which means that there exists $x\in A$ with $\A^*(x)=A$) \cite[Proposition 4.7]{ST}. 
The latter remains to be a hard problem, it seems. 
The difficulty of considering singly generated $^*$-algebras can also be observed in the following relevant famous open problem, which is listed in Kadison's problems \cite{Ge} right before Problem \ref{kadison}. 
Is every von Neumann algebra acting on a separable Hilbert space $H$ singly generated (as a $\sigma$-weakly closed $^*$-subalgebra of $B(H)$)?
For more details on this famous problem, see \cite[Chapter 16]{SS}. 

We have seen the importance of the uniqueness of C$^*$-norm for R$^*$-algebras in Theorem \ref{c*norm}. 
Recall that uniqueness of C$^*$-norm is important in the theory of C$^*$-algebras, too. 
Indeed, nuclearity of a C$^*$-algebra, which lies in the core of the study of C$^*$-algebras, is defined in terms of the uniqueness of C$^*$-norm: 
A C$^*$-algebra $A$ is nuclear if for every C$^*$-algebra $B$ the algebraic tensor product $A\otimes B$ has a unique C$^*$-norm. 
It is natural to ask the following question. 
\begin{question}
Does the study of R$^*$-algebras give a better understanding of the uniqueness of C$^*$-norm? 
In particular, can we obtain a nice necessary/sufficient condition for a $^*$-algebra to have a unique C$^*$-norm? 
\end{question}

Property (LC$^*$) in Section \ref{defi} gives one sufficient condition.
In the case of the group algebra $\C[G]$ of a discrete group $G$, Grigorchuk, Musat and R{\o}rdam asked in \cite[Question 6.8]{GMR} whether $\C[G]$ has a unique C$^*$-norm if and only if $G$ is locally finite. 
It was pointed out by N. Ozawa that this question has a negative solution, see \cite{Al}. 
Let us also give a related question, which makes sense only in the case there exists a non-ultramatricial R$^*$-algebra. 
\begin{question}\label{tensor?}
Given a pair of non-ultramatricial R$^*$-algebras $A, B$, does $A\otimes B$ form an R$^*$-algebra?
\end{question}
Note that if the answer is affirmative then $A\otimes B$ has a unique C$^*$-norm, which further implies that $\widehat{A}\otimes \widehat{B}$ has a unique C$^*$-norm, or in other words, $(\widehat{A}, \widehat{B})$ is a nuclear pair \cite[Definition 9.1]{Pi}.
Another question {that} makes sense only in the case there exists a non-ultramatricial R$^*$-algebra is the following. 
\begin{question}
Does Theorem \ref{gleason} (Gleason-type theorem) generalize to a wider class of R$^*$-algebras? 
In particular, is the following statement true?
For every R$^*$-algebra $A$ and an integer $n\geq 3$, a finitely additive measure $\mu\colon \P(\M_n(A))\to \R$ with bounded range extends linearly to $\M_n(A)$.
\end{question}

Let us propose a question about the property of R$^*$-algebras that is shared with C$^*$-/von Neumann algebras. 
\begin{question}
Let $H$ be a Hilbert space. 
Study the class of $^*$-subalgebras $A\subset B(H)$ that satisfy the property of Proposition \ref{operation} (1). 
How about (2) and (3)? (In (2) we assume Borel measurability and boundedness of $f$.)
\end{question}
We know that every C$^*$-algebra and every R$^*$-algebra satisfy (1).
So does the union of an increasing net of C$^*$-algebras.  
In Blackadar's book \cite{BlK} a $^*$-subalgebra $A\subset B(H)$ that satisfies (1) is called a local C$^*$-algebra.
Note that in the article \cite{BH} the same term local C$^*$-algebra is used in another sense: 
It means a $^*$-subalgebra $A\subset B(H)$ with the property (LC$^*$) in Section \ref{defi}, which is a condition far weaker than (2).
Every von Neumann algebra and every R$^*$-algebra satisfy (2) and (3). 
If $A\subset B(H)$ is a $^*$-subalgebra and the $\sigma$-weak closure of $\A^*(x)$ is contained in $A$ for every normal element $x\in A$, then $A$ satisfies (2).   
We do not know whether (2) implies (1).

Recall that a directed union of R$^*$-algebras is an R$^*$-algebra. 
This together with Zorn's lemma enables us to consider maximal R$^*$-algebras: 
For every C$^*$-algebra $A$ there is a maximal R$^*$-subalgebra $B\subset A$ (in the sense that if $B_0\subset A$ is an R$^*$-subalgebra of $A$ with $B\subset B_0$ then $B=B_0$).  
\begin{question}
Is it possible to find a concrete example of a maximal R$^*$-subalgebra of $B(\ell_2)$ (without resorting to Zorn's lemma)? 
\end{question}
Remark that if $A\subset B(H)$ is an R$^*$-subalgebra then $A+F(H)+\C \operatorname{id}_H$ is also an R$^*$-algebra. 
Thus a maximal R$^*$-subalgebra $A\subset B(H)$ needs to satisfy $F(H)+\C \operatorname{id}_H\subset A$.

By the type decomposition theorem of von Neumann algebras and Dye's theorem, it is easy to see that a pair of von Neumann algebras have mutually orthoisomorphic projection lattices if and only if the von Neumann algebras are Jordan $^*$-isomorphic. 
We do not have a succinct decomposition result for R$^*$-algebras. 
That is why the following question can possibly have a negative answer. 
\begin{question}
Let $A, B$ be two R$^*$-algebras. 
Assume that $\P(A)$ and $\P(B)$ are orthoisomorphic. 
Does it follow that $A$ is Jordan $^*$-isomorphic to $B$?
\end{question}
It is also of interest whether the ``orthoisomorphic'' in this question can be replaced with ``lattice isomorphic''. 
Example \ref{nonunital} implies that this is not true in the non-unital case.
On the other hand, the author's result \cite[Corollary 4.3]{M} (see also \cite[Corollary 1.5]{AK} by Ayupov and Kudaybergenov) implies that a pair of certain von Neumann algebras 
have mutually isomorphic projection lattices (as lattices) if and only if the von Neumann algebras are Jordan $^*$-isomorphic.
If we restrict ourselves to the case $A$ and $B$ are sequentially ultramatricial R$^*$-algebras, Theorem \ref{sumlattice} states that $A$ and $B$ are $^*$-isomorphic if $\P(A)$ is lattice isomorphic to $\P(B)$. 
This motivates a further question concerning the non-ultramatricial case: 
\begin{question}
Let $A, B$ be R$^*$-algebras with countable Hamel dimension such that $\P(A)$ is lattice isomorphic to $\P(B)$.
Does it follow that $A$ is $^*$-isomorphic to $B$? 
\end{question}

Every statement about (generalized) Boolean algebras gives rise to a question about R$^*$-algebras. 
Below we propose several questions in this direction. 

Before that, we remark that there are several other attempts to give a framework for the theory of noncommutative Boolean algebras or noncommutative Boolean spaces. 
One natural way is to detect the class of commutative C$^*$-algebras that arise as $C_0(K)$ for zero-dimensional (locally) compact Hausdorff spaces $K$. 
This class coincides with that of commutative C$^*$-algebras with real rank zero  (i.e., every self-adjoint operator is in the norm closure of the collection of self-adjoint operators with finite spectra). 
Thus one may consider C$^*$-algebras with real rank zero as noncommutative analogs of zero-dimensional locally compact Hausdorff spaces.
See \cite[Section V.3]{Bl} for more information about real and stable rank.

Another way of formulating noncommutative Boolean algebras is to consider a certain class of inverse semigroups, which in connection with groupoids can be related to the theory of C$^*$-algebras. 
See \cite{L} for the details. 
Yet another direction of research is the theory of quantum logic initiated by Birkhoff and von Neumann \cite{BN}, 
see \cite{Dv}.
The main target of this theory is an orthomodular lattice (usually assumed to be $\sigma$-complete), which is an orthocomplemented lattice $\P$ such that $p\vee (q\wedge p^{\perp})=q$ for any $p, q\in \P$ with $p\leq q$.
This class of lattices include{s} the projection lattices of von Neumann algebras.

By Stone's theorem{,} zero-dimensional locally compact Hausdorff spaces are in one-to-one correspondence with generalized Boolean algebras. 
Recall that our main discovery in this paper is that R$^*$-algebras form noncommutative analogs of generalized Boolean algebras. 
For this reason{,} the author guesses that those C$^*$-algebras obtained by completing R$^*$-algebras can be considered as noncommutative analogs of zero-dimensional locally compact Hausdorff spaces.
(However, we need to be aware of the fact that the completion, e.g.\ $K(H)$, can have many different dense R$^*$-subalgebras.)
Now it is natural to ask:
\begin{question}
Which C$^*$-algebra arises as the completion of an R$^*$-algebra?
\end{question}
By definition (see Example \ref{af}) this class contains all AF C$^*$-algebras.
Recall that the completion of an R$^*$-algebra has real rank zero, and stable rank one if unital.
Is it true that the completion of every R$^*$-algebra is nuclear?
If every R$^*$-algebra is ultramatricial, then a C$^*$-algebra is the completion of some R$^*$-algebra if and only if it is AF. 
Blackadar once wrote in the first edition \cite[7.1]{BlK1} of his book on K-theory of C$^*$-algebras the sentence ``AF algebras are in some sense the ``zero-dimensional" C$^*$-algebras'', but he added to this sentence in the second edition \cite[7.1]{BlK} the phrase ``although real rank zero is now regarded as the most natural noncommutative analog of zero-dimensionality''.

Let us summarize several facts on Boolean algebras.
Although Theorem \ref{elliott} is very important in the theory of AF C$^*$-algebras, it gives no better understanding of commutative AF C$^*$-algebras/generalized Boolean algebras. 
This fact can be observed in the following proposition.
\begin{proposition}
Let $\S\subset 2^X$ be a generalized field of sets. 
As in Section \ref{com} we identify $R(\S)$ with the space of functions $\{f\colon X\to \C\mid \#f(X)<\infty,\,\,f^{-1}(z)\in \S\text{ for all }z\in \C\setminus\{0\}\}$.
\begin{itemize}
\item $R(\S)$ has countable Hamel dimension  if and only if $R(\S)$ is separable if and only if $\S$ has countable cardinality.
\item $K_0(R(\S))$ is isomorphic to $\{f\in R(\S)\mid f(X)\subset\Z\}$, and $K_0(R(\S))_+$ to $\{f\in K_0(R(\S))\mid f\geq 0\}$.
\item $d(R(\S))$ is the same as $\S$ as a set, and for $A, B\in d(R(\S))=\S$, the sum $A+B$ is defined if and only if $A\wedge B=0$, in which case we have $A+B=A\vee B$.
\end{itemize}
\end{proposition}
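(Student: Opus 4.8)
The plan is to exploit two facts recorded in (the proof of) Theorem~\ref{boole}: under the identification of $A\in\S$ with the projection $\chi_A\in R(\S)$ one has $\P(R(\S))=\S$, and $R(\S)$ acts on $\ell_2(X)$ by pointwise multiplication so that its operator norm agrees with the supremum norm $\lVert f\rVert=\sup_{x\in X}\lvert f(x)\rvert$. Since an R$^*$-algebra is linearly spanned by its projections, the family $\{\chi_A\mid A\in\S\}$ spans $R(\S)$. I would dispose of the third bullet first, as it underlies the rest. The key point is that commutativity collapses Murray--von Neumann equivalence: if $v\in R(\S)$ satisfies $v^*v=\chi_A$ and $vv^*=\chi_B$, then $v^*v=vv^*$ forces $\chi_A=\chi_B$, i.e.\ $A=B$. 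Hence $\sim$ is equality and $d(R(\S))=\P(R(\S))=\S$ as sets. Unwinding the definition of the partial operation, $[\chi_A]+[\chi_B]$ is defined iff $\chi_A\chi_B=0$, that is $A\cap B=\emptyset$ (equivalently $A\wedge B=0$), and in that case $\chi_A+\chi_B=\chi_{A\cup B}$ yields $A+B=A\vee B$.

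For the first bullet I would run the cycle (countable Hamel dimension) $\Rightarrow$ (separable) $\Rightarrow$ ($\#\S\le\aleph_0$) $\Rightarrow$ (countable Hamel dimension). A countable Hamel basis produces a countable dense set by taking finite combinations with coefficients in $\mathbb{Q}+i\mathbb{Q}$, giving separability. Conversely, if $\S$ is uncountable then $\{\chi_A\mid A\in\S\}$ is an uncountable family of projections with $\lVert\chi_A-\chi_B\rVert=1$ whenever $A\ne B$ (the function $\chi_A-\chi_B$ is nonzero and takes values in $\{-1,0,1\}$), so no countable subset of $R(\S)$ can be dense; thus separability forces $\#\S\le\aleph_0$. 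Finally, $\#\S\le\aleph_0$ makes the spanning set $\{\chi_A\mid A\in\S\}$ countable, so the Hamel dimension is countable.

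The second bullet is the substantial one, and I would attack it through the rank function. For a projection $p\in\M_\infty(R(\S))$, viewed as a simple $\M_\infty(\C)$-valued function, set $\rho(p)(x):=\operatorname{rank}p(x)$; this is a $\Z_{\ge0}$-valued element of $R(\S)$. One checks that $\rho$ is additive under orthogonal sums and that $p\sim q$ implies $\rho(p)=\rho(q)$, since a partial isometry $v$ with $v^*v=p$ and $vv^*=q$ gives $\operatorname{rank}p(x)=\operatorname{rank}q(x)$ pointwise. The main obstacle is the converse, that equal rank functions force equivalence. Here I would pass to the common refinement of the (finite, $\S$-valued) partitions defining $p$ and $q$, on each piece choose a constant partial isometry in $\M_n(\C)$ between the two constant projections of equal rank there, and assemble these into a single $v\in\M_\infty(R(\S))$; this is legitimate precisely because the underlying space is zero-dimensional, i.e.\ the refinement pieces again lie in $\S$, so $v$ is indeed a simple $\S$-measurable function.

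Granting this, $\rho$ is a semigroup isomorphism from $V(R(\S))=d(\M_\infty(R(\S)))$ (whose operation is defined for every pair, as noted in Section~\ref{countable}) onto the cancellative semigroup of $\Z_{\ge0}$-valued functions in $R(\S)$. Passing to Grothendieck groups, and using the canonical injection $V(R(\S))\hookrightarrow K_0(R(\S))$ coming from stable rank one (recorded in Section~\ref{countable}), identifies $K_0(R(\S))$ with the group $\{f\in R(\S)\mid f(X)\subset\Z\}$ of integer-valued functions --- every such $f$ is the difference $f_+-f_-$ of two $\Z_{\ge0}$-valued functions in $R(\S)$ --- and identifies $K_0(R(\S))_+$, the image of $V(R(\S))$, with the nonnegative such functions, as claimed.
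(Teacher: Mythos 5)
Your proof is correct, and since the paper explicitly omits the proof of this proposition as ``easy and well-known,'' your write-up supplies exactly the intended standard argument: commutativity forces Murray--von Neumann equivalence to collapse to equality (giving the third bullet), a $1$-separated family of projections settles the cardinality equivalences, and the pointwise rank function on projections in $\M_\infty(R(\S))$ --- well defined because the common refinement of the finitely many level sets stays inside the generalized field $\S$ --- implements the isomorphism of $V(R(\S))$ with the nonnegative integer-valued functions and hence of $K_0$ with the integer-valued ones. The only cosmetic point is your appeal to ``zero-dimensionality'': the correct justification, which you in fact use, is simply that $\S$ is closed under $\cap$, $\cup$, $\setminus$, so the refinement pieces (within the union of the supports, off which both projections vanish) again lie in $\S$.
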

We omit the proof because it is easy and these facts should be well-known. 
There exist many wild examples of Boolean algebras and Boolean spaces, which implies the existence of wild commutative R$^*$-algebras.

\begin{example}\label{ketonen}
In \cite{Ke}, as a byproduct of his structural theorem for countable Boolean algebras, Ketonen proved that there exists a countable Boolean algebra $\S$ such that $\S$ is lattice isomorphic to $\S\times \S\times \S$ but not to $\S\times \S$. 
This is known as a counterexample to {Tarski's} cube problem.
It follows that the commutative unital R$^*$-algebra $R(\S)$ has countable Hamel dimension, and $R(\S)$ is $^*$-isomorphic to $R(\S)\oplus R(\S)\oplus R(\S)$ but not to $R(\S)\oplus R(\S)$. 
\end{example}

\begin{example}\label{trmkova}
It is known that every metrizable Boolean space $X$ that is homeomorphic to $X\times X\times X$ is homeomorphic to $X\times X$. 
It is also known that there exists a separable Boolean space $X$ that is homeomorphic to $X\times X\times X$ but not to $X\times X$. 
See Trnkov\'a's \cite[2.5]{Tr}. 
Recall that for a locally compact Hausdorff space $K$, $K$ is metrizable if and only if $C_0(K)$ is separable.
It follows that there exists no countable Boolean algebra $\S$ such that $R(\S)$ is $^*$-isomorphic to $R(\S)\otimes R(\S)\otimes R(\S)$ but not to $R(\S)\otimes R(\S)$, and there does exist such an example if we drop the assumption of countability.
\end{example}

Are there similar examples in the noncommutative case? 

\begin{question}\label{rcube}
Is there a simple or countable-dimensional R$^*$-algebra $A$ that is $^*$-isomorphic to $A\otimes A\otimes A$, but not to $A\otimes A$? 
\end{question}
Question \ref{tensor?} might force us to consider only the case of ultramatricial R$^*$-algebras.
See also the Scottish Book \cite{SB} Problem 77. 

Let us also consider the following similar question: 
Is there an R$^*$-algebra $A$ that is $^*$-isomorphic to $\M_3(A)$, but not to $\M_2(A)$? 
If we restrict ourselves to the simple purely atomic case, this reduces to the following one. 
Is there an inner product space $V$ such that $V$ is isometrically isomorphic to the $\ell_2$-direct sum $V\oplus V\oplus V$ but not to $V\oplus V$?
This question is reminiscent of an example by Gowers and Maurey in \cite{GM} of a Banach space $X$ that is isomorphic to $X\oplus X\oplus X$ but not to $X\oplus X$ (as Banach spaces). 
In the general setting of R$^*$-algebras, the infinite tensor product $\bigotimes_{n\geq 1} \M_3$ is an example that satisfies the condition of the question.
(The author thanks Masaki Izumi for pointing out this fact.)

In \cite[Theorem 4.1]{KR} Kalton and Roberts proved that there exists a constant $K>0$ such that for every Boolean algebra $\S$ and every map $f\colon \S \to \R$ with $f(0)=0$ and $\lvert f(p\vee q)-f(p)-f(q)\rvert \leq 1$ for any mutually orthogonal pair $p, q\in \S$, there exists a finitely additive (signed) measure $\mu\colon \S\to \R$ such that $\lvert f(p)-\mu(p)\rvert\leq K$ for every $p\in \S$. 
This theorem was applied in \cite{KR} to study a certain property of  (quasi-)Banach spaces. 
By comparing this with Theorem \ref{gleason}, it is fairly natural to formulate its noncommutative version. 
\begin{question}[Noncommutative analog of the Kalton--Roberts theorem]
Is there a constant $K>0$ with the following property?
For every unital R$^*$-algebra $A$ and every map $f\colon \P(A) \to \R$ with $f(0)=0$ and $\lvert f(p\vee q)-f(p)-f(q)\rvert \leq 1$ for any mutually orthogonal pair $p, q\in \P(A)$, there exists a finitely additive (signed) measure $\mu\colon \P(A)\to \R$ such that $\lvert f(p)-\mu(p)\rvert\leq K$ for every $p\in \P(A)$. 
\end{question}
It might be interesting to consider the same question for the projection lattice of a von Neumann algebra.

\end{document}